\numberwithin{equation}{section}
\newtheorem{thm}{Theorem}
\numberwithin{thm}{section}
\newtheorem{lem}[thm]{Lemma}
\newtheorem{cor}[thm]{Corollary}
\newtheorem{prop}[thm]{Proposition}
\newtheorem{dfn}[thm]{Definition}
\theoremstyle{definition}
\newtheorem{rem}[thm]{Remark}
\newcommand{\RR}{\mathbb{R}}
\newcommand{\QQ}{\mathbb{Q}}
\newcommand{\ZZ}{\mathbb{Z}}
\newcommand{\CC}{\mathbb{C}}
\newcommand{\FF}{\mathbb{F}}
\newcommand{\bC}{\mathbb{C}}
\newcommand{\bF}{\mathbb{F}}
\newcommand{\bP}{\mathbb{P}}
\newcommand{\bQ}{\mathbb{Q}}
\newcommand{\bZ}{\mathbb{Z}}
\newcommand{\cO}{\mathcal{O}}
\newcommand{\cP}{\mathcal{P}}
\newcommand{\Hom}{\operatorname{Hom}}
\newcommand{\Ext}{\operatorname{Ext}}
\newcommand{\gal}{\operatorname{Gal}}
\newcommand{\gl}{\operatorname{GL}}
\newcommand{\pgl}{\operatorname{PGL}}
\newcommand{\ord}{\operatorname{ord}}
\DeclareMathOperator{\lcm}{lcm}
\newcommand{\SL}{\operatorname{SL}}
\newcommand{\GL}{\operatorname{GL}}
\newcommand{\PGL}{\operatorname{PGL}}
\newcommand{\Gal}{\operatorname{Gal}}
\title{Classifications of Hecke Fields and Galois Images of Weight One Exotic Newforms}
\author{Ryotaro Sakamoto and Sho Yoshikawa}
\address{R.S.: Department of Mathematics\\University of Tsukuba\\1-1-1 Tennodai\\Tsukuba\\Ibaraki 305-8571\\Japan}
\email{rsakamoto@math.tsukuba.ac.jp}
\address{S.Y.: Department of Mathematics\\Faculty of Science Division I\\Tokyo University of Science. 1-3 Kagurazaka\\Shinjuku-ku\\Tokyo 162-8601\\Japan}
\email{yoshikawa@rs.tus.ac.jp}
\date{\today}
\begin{document}

\begin{abstract}
We determine the Hecke fields associated with weight one newforms of $A_4$-, $S_4$-, and $A_5$-type, 
expressed in terms of the order of its nebentypus. 
Furthermore, for each type, we provide a complete classification of the images of the corresponding Galois representations. 
\end{abstract}
    
\maketitle
\setcounter{tocdepth}{2}
\tableofcontents

\section{Introduction}\label{intro}

Let 
\[f=\sum_{n\geq 1} a_n(f)q^n\in S_1(N,\chi)\]
be a weight one newform of level $N$ and nebentypus $\chi$ with $\chi(-1)=-1$.
By the theorem of Deligne and Serre \cite[Th\'eor\`eme 4.1]{DS}, there exists a continuous irreducible odd Galois representation
\[
\rho_f \colon G_\QQ := \operatorname{Gal(\overline{\QQ}/\QQ)} \to \gl_2(\CC)
\]
associated with $f$, in the sense that $\rho_f$ is unramified at  each prime $p \nmid N$ and the characteristic polynomial of $\rho_f(\mathrm{Frob}_p)$ is $X^2-a_p(f)X+\chi(p)$.

It is well-known that the projective image of $\rho_f$, namely the image of $\rho_f$ in $\mathrm{PGL}_2(\mathbb{C}) := \mathrm{GL}_2(\mathbb{C})/\mathbb{C}^\times$, is isomorphic to one of the finite groups $D_{2n}$, $A_4$, $S_4$, or $A_5$. 
If the projective image is isomorphic to the dihedral group $D_{2n}$ of order $2n$ for some integer $n$, then we say that $f$ is of dihedral type; otherwise, we say that $f$ is exotic, and we further distinguish the cases of $A_4$-type, $S_4$-type, or $A_5$-type, respectively.

Let $K_f$ be the Hecke field associated with $f$; that is, 
\[
K_f := \QQ(\{a_n(f)\}_{n\geq 1}).  
\]

\begin{rem}
The Galois representation $\rho_f$ is defined over $K_f$, that is, it is conjugate to a homomorphism $G_{\bQ} \to \GL_2(K_f)$.
Although this does not follow directly from the definition of $K_f$, it is a consequence of the fact that $\rho_f$ is odd (see, for instance, the discussion on MathOverflow \cite{MathOverflow_Field_of_Definition_of_Galois_repn}).
\end{rem}

Since $\rho_f$ has finite image, the eigenvalues of $\rho_f(\mathrm{Frob}_p)$ for $p \nmid N$ are roots of unity; hence the Frobenius trace $a_p(f)$ lies in a cyclotomic field. 
Consequently, $K_f$ is an abelian extension of $\mathbb{Q}$.
In the first part of this paper, we determine the Hecke field $K_f$ of a weight one exotic newform $f$ in terms of the order of its nebentypus $\chi$. 

In what follows, $d$ will always stand for the order of the nebentypus $\chi$;
\[
d := \mathrm{ord}(\chi). 
\]
We note that, as $\chi(-1) = -1$, the order $d$ is necessarily even. 
For any positive integer $n$, let $\zeta_n\in \CC$ denote a primitive $n$th root of unity. 
In this setting, the following theorems are among the main results of the present paper. 

\begin{thm}[Theorem \ref{thm:mainA4general}]
If $f$ is of $A_4$-type, then $K_f=\QQ(\zeta_{2d})$.
\end{thm}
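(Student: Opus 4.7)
The plan is to establish $K_f \subseteq \QQ(\zeta_{2d})$ and $K_f \supseteq \QQ(\zeta_{2d})$ separately. Let $G := \rho_f(G_\QQ) \subseteq \gl_2(\CC)$, with projective image $A_4$ and scalar center $Z$, so $G/Z \cong A_4$, and let $\mu_n \subseteq \CC^\times$ denote the group of $n$-th roots of unity. By Chebotarev density, $K_f$ equals the field generated by $\operatorname{tr}(g)$ for all $g \in G$. For the first inclusion, I would compute these traces case by case on the image of $g$ in $A_4$. If $g = \zeta I \in Z$, then $\det g = \zeta^2 \in \mu_d$ forces $\zeta \in \mu_{2d}$, and so $\operatorname{tr}(g) = 2\zeta \in \QQ(\zeta_{2d})$. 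If $g$ lifts a double transposition, its eigenvalues are $\pm\lambda$ and $\operatorname{tr}(g) = 0$. If $g$ lifts a $3$-cycle, the eigenvalues are $\lambda$ and $\zeta_3^{\pm 1}\lambda$, and the identity $1 + \zeta_3 + \zeta_3^2 = 0$ yields the key relation $\operatorname{tr}(g)^2 = \det g$; hence $\operatorname{tr}(g) = \pm\sqrt{\det g} \in \mu_{2d}$. Thus all traces lie in $\QQ(\zeta_{2d})$.

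For the reverse inclusion, the weight-one Hecke relation $\chi(p) = a_p(f)^2 - a_{p^2}(f)$ shows $\chi(p) \in K_f$ for every $p \nmid N$; since $\chi$ has order $d$, Chebotarev gives $\QQ(\zeta_d) \subseteq K_f$. Because $[\QQ(\zeta_{2d}) : \QQ(\zeta_d)] = 2$ (as $d$ is even), it suffices to find one element of $K_f \cap (\mu_{2d} \setminus \mu_d)$. Let $T \subseteq \mu_d$ be the set of determinants of lifts of $3$-cycles in $G$. If $T \not\subseteq \mu_{d/2}$, pick $t \in T \setminus \mu_{d/2}$: then $t$ is not a square in $\mu_d$, so any square root $\sqrt{t}$ lies in $\mu_{2d} \setminus \mu_d$; by Chebotarev, $\sqrt{t} = \pm a_p(f) \in K_f$ for a suitable $p$, and we are done.

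The main obstacle is the remaining case $T \subseteq \mu_{d/2}$, in which no $3$-cycle Frobenius directly produces the required element. The plan here is to consider the character $\epsilon \colon G \to \{\pm 1\}$ defined by $\epsilon(g) := \det(g)^{d/2}$, which by hypothesis is trivial on every $3$-cycle lift. If $\epsilon|_Z$ were also trivial, then $\epsilon$ would factor through $G/Z \cong A_4$; but $A_4$ admits no nontrivial character to $\{\pm 1\}$ (since $A_4^{\mathrm{ab}} \cong \ZZ/3$), so $\epsilon \equiv 1$, contradicting $\det G = \mu_d$. Hence some $\zeta \in Z$ satisfies $\zeta^d = -1$, which in particular places $\zeta \in \mu_{2d} \setminus \mu_d$; its trace puts $2\zeta$, and hence $\zeta$, into $K_f$, yielding $K_f \supseteq \QQ(\zeta_d, \zeta) = \QQ(\zeta_{2d})$ and completing the proof.
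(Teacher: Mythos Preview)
Your proof is correct, but the lower bound argument follows a genuinely different route from the paper's.

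For the inclusion $K_f \supseteq \QQ(\zeta_{2d})$, the paper works one prime $\ell \mid d$ at a time: writing $d = \ell^e d'$ and using that the set $Q_{\ell^e}$ of primes $p$ with $\ell^e \mid \ord(\chi(p))$ has Dirichlet density $1 - 1/\ell \geq 1/2$, which exceeds $d(R_2) = 1/4$, it finds a prime $p \in Q_{\ell^e} \cap (R_1 \sqcup R_3)$ and reads off $\zeta_{2\ell^e} \in \QQ(a_p(f))$. Your argument instead first secures $\QQ(\zeta_d) \subseteq K_f$ in one stroke via the Hecke relation $\chi(p) = a_p(f)^2 - a_{p^2}(f)$, then reduces the problem to producing a single element of $\mu_{2d} \setminus \mu_d$ in $K_f$, which you handle by a group-theoretic dichotomy culminating in the observation that $A_4$ admits no surjection onto $\{\pm 1\}$. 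The paper's density comparison is more uniform and transplants verbatim to the $A_5$ case; your approach is more structural, and the $\epsilon$-character trick is an elegant replacement for density bookkeeping (it would also go through for $A_5$, since $A_5^{\mathrm{ab}}$ is trivial, but would break down for $S_4$---which is precisely where the paper's result bifurcates as well). One minor remark: in your Case~2 you note that $\epsilon$ is trivial on $3$-cycle lifts, but your contradiction only uses that $\epsilon$ factors through $A_4$; the $3$-cycle hypothesis is not actually needed at that point.
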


\begin{thm}[Theorem \ref{thm:mainA5general}]
If $f$ is  of $A_5$-type, then $K_f=\QQ(\zeta_{2d}, \sqrt{5})$. 
\end{thm}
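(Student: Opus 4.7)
The plan is to analyze the finite image $G := \rho_f(G_\QQ) \subset \gl_2(\CC)$ structurally and read off $K_f$ from its character values. To begin, I would determine the isomorphism class of $G$. Its center $Z := Z(G)$ is scalar because $A_5$ is centerless, so $Z = \mu_m$ for some $m \geq 1$. The commutator subgroup $[G,G]$ lies in $\mathrm{SL}_2(\CC)$ and surjects onto $[A_5,A_5] = A_5$; it must carry a faithful two-dimensional representation (otherwise $\rho_f$ would factor through an abelian quotient of $[G,G]$, contradicting irreducibility) while lifting $A_5$, so it coincides with the binary icosahedral group $2\cdot A_5 \cong \mathrm{SL}_2(\mathbb{F}_5)$, the Schur cover of $A_5$. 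In particular $\{\pm I\} \subset Z$, so $m$ is even, and
\[
G \cong (\mu_m \times 2\cdot A_5)/\langle(-1,-1)\rangle,
\]
under which $\rho_f$ is the representation $(z,\tilde g) \mapsto z\,\sigma(\tilde g)$, where $\sigma$ is a faithful two-dimensional representation of $2\cdot A_5$.

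Next I would pin down $m$. Because $2\cdot A_5$ is perfect, $\det\sigma = 1$, and on scalars $\det(zI) = z^2$; hence the image of $\det\rho_f$ is $\mu_{m/2}$. Matching this against the image $\mu_d$ of $\det\rho_f = \chi$ gives $m = 2d$.

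From here the conclusion is a direct character computation. For every $g = z\tilde g \in G$ one has
\[
\tr\rho_f(g) = z\,\chi_\sigma(\tilde g),
\]
and the character table of $\mathrm{SL}_2(\mathbb{F}_5)$ shows $\chi_\sigma(\tilde g) \in \QQ(\sqrt{5})$, with $\sqrt{5}$ attained on the order-$5$ classes via $\chi_\sigma(\tilde g) = (-1\pm\sqrt{5})/2$. By Chebotarev density every conjugacy class of $G$ is realized as $\rho_f(\mathrm{Frob}_p)$ for infinitely many $p\nmid N$, so all these trace values lie in $K_f$. Taking $\tilde g = 1$ gives $\QQ(\zeta_{2d}) \subseteq K_f$; taking $z = 1$ and $\tilde g$ of order $5$ gives $\sqrt{5} \in K_f$; and the formula above gives the reverse inclusion $K_f \subseteq \QQ(\zeta_{2d},\sqrt{5})$.

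The main difficulty is the first step: identifying $G$ with the prescribed quotient of $\mu_{2d} \times 2\cdot A_5$ uses that $A_5$ has Schur multiplier $\ZZ/2\ZZ$ and that $A_5$ itself admits no faithful two-dimensional complex representation, so that any lift of the projective $A_5$-representation to $\gl_2(\CC)$ is governed by the binary icosahedral double cover. Once this structural picture is in place, the determination of $K_f$ reduces to a transparent character computation combined with Chebotarev density.
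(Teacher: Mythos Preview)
Your proof is correct and takes a genuinely different route from the paper. The paper never identifies the image $G = \rho_f(G_\QQ)$ explicitly; instead it works entirely through the projective image $A_5$ and the invariant $a_p(f)^2/\chi(p)$ of Lemma~\ref{BL}, obtaining the inclusion $K_f \subset \QQ(\zeta_{2d},\sqrt{5})$ from Lemma~\ref{lem:hecke_field_ap} and the reverse inclusion by a density comparison: for each prime $\ell \mid d$ with $\ell^e$ the exact power dividing $d$, the set $Q_{\ell^e}$ of primes with $\ell^e \mid \ord(\chi(p))$ has density $1 - 1/\ell > d(R_2) = 1/4$ (Lemma~\ref{density}), so it meets $R_1 \sqcup R_3 \sqcup R_5$ and yields a Fourier coefficient containing $\zeta_{2\ell^e}$. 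Your approach instead pins down $G \cong (\mu_{2d} \times \mathrm{SL}_2(\mathbb{F}_5))/\langle(-1,-1)\rangle$ using the Schur multiplier of $A_5$ and the absence of a faithful two-dimensional representation of $A_5$, then reads off every trace at once from the character table of $\mathrm{SL}_2(\mathbb{F}_5)$. This is structurally cleaner and avoids the density comparison altogether, at the cost of invoking the classification of central extensions and the explicit character table; the paper's method, by contrast, is more elementary and applies uniformly to the $A_4$ and $S_4$ cases without case-by-case analysis of the relevant covers (which for $S_4$ is genuinely more delicate, as the two Schur covers $2.S_4^{\pm}$ behave differently). One small expository point: your parenthetical justification for $[G,G] \cong 2\cdot A_5$ is compressed---what you are really using is that $\rho_f|_{[G,G]}$ is irreducible (otherwise $[G,G]$ would act through an abelian quotient, hence be abelian since the restriction is faithful, contradicting the surjection onto $A_5$), combined with the fact that $A_5$ itself has no irreducible two-dimensional representation.
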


\begin{thm}[Theorem \ref{thm:mainS4general}]\label{intro_mainS4general}
Let $\mathrm{ord}_2(d) \geq 1$ denote the $2$-adic  valuation of the even integer $d$. If $f$ is of $S_4$-type, then 
\[
    K_f = 
\begin{cases}
\QQ(\zeta_{d},\sqrt{-2}) \,\, \textrm{ or } \,\, \QQ(\zeta_{4d}) & \textrm{ if } \, \mathrm{ord}_2(d) = 1, 
\\
\QQ(\zeta_{d}) \,\, \textrm{ or } \,\, \QQ(\zeta_{2d}) & \textrm{ if } \, \mathrm{ord}_2(d) = 2, 
\\
\QQ(\zeta_{2d})  & \textrm{ if } \, \mathrm{ord}_2(d) \geq 3.  
\end{cases}
\]
\end{thm}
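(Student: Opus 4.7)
The strategy is to compute each $a_p$ in terms of $\chi(p)$ and the projective conjugacy class of $\rho_f(\mathrm{Frob}_p)$ in $S_4$, and then pin down $K_f$ by Chebotarev's density theorem. If $\rho_f(\mathrm{Frob}_p)$ has eigenvalues $\alpha,\beta$, then $\alpha/\beta$ is a root of unity whose order equals that of the projective image, namely $1$, $2$, $3$, or $4$. Combining $\alpha\beta = \chi(p)$ with a direct computation of $(\alpha+\beta)^2$ in each case, one obtains
\[
a_p^2 \;=\; t\cdot \chi(p), \qquad t \in \{4,0,1,2\} \text{ according to the projective order being } 1,2,3,4.
\]
Therefore $a_p \in \QQ(\sqrt{2},\zeta_{2d}) = \QQ(\zeta_{\mathrm{lcm}(2d,8)})$. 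Since $8\mid 2d$ precisely when $\ord_2(d)\geq 2$, this gives the upper bound $K_f \subseteq \QQ(\zeta_{2d})$ when $\ord_2(d)\geq 2$ and $K_f \subseteq \QQ(\zeta_{4d})$ when $\ord_2(d)=1$.

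Next I would analyse the image of $G_\QQ$ under $(\bar\rho_f,\chi)$ inside $S_4 \times \langle\zeta_d\rangle$. Let $L_1$ be the splitting field of $\bar\rho_f$ and $L_2 = \QQ(\zeta_M)$ with $M$ the conductor of $\chi$. Because $S_4^{\mathrm{ab}} \cong \ZZ/2\ZZ$, the intersection $L_1 \cap L_2$ is either $\QQ$ or the quadratic subfield $F = L_1^{A_4}$. If $F \not\subseteq L_2$, every pair of Frobenius classes is realised by Chebotarev; if $F \subseteq L_2$, there is a single relation $\mathrm{sgn}(\bar\rho_f(\mathrm{Frob}_p)) = \epsilon(p)$, where $\epsilon$ is the quadratic Dirichlet character cutting out $F$. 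The genuinely restrictive situation is when $\epsilon = \chi^{d/2}$ (the unique non-trivial quadratic character factoring through $\chi$): in that case $A_4$-type Frobenius must have $\chi(p)$ of order dividing $d/2$ and $4$-cycles must have $\chi(p)$ of exact order $d$, whereas if $\epsilon \neq \chi^{d/2}$ the restriction $\chi|_{\ker\epsilon}$ remains surjective onto $\langle\zeta_d\rangle$ and every $\chi$-value occurs in each class. This is the source of the two alternatives listed for $\ord_2(d)\in\{1,2\}$.

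In the non-restrictive scenario an order-$3$ Frobenius with primitive $\chi(p)$ gives $a_p = \pm\zeta_{2d}$, whence $\QQ(\zeta_{2d}) \subseteq K_f$, and an order-$4$ Frobenius with $\chi(p)=1$ gives $a_p = \pm\sqrt{2}$, whence $K_f = \QQ(\zeta_{2d},\sqrt{2})$, which equals $\QQ(\zeta_{2d})$ if $\ord_2(d)\geq 2$ and $\QQ(\zeta_{4d})$ if $\ord_2(d)=1$. In the restrictive subcase $\epsilon = \chi^{d/2}$, order-$3$ Frobenius contribute only $\sqrt{\langle\zeta_{d/2}\rangle} = \langle\zeta_d\rangle \subseteq \QQ(\zeta_d)$, while order-$4$ Frobenius contribute $\sqrt{2\zeta_d^*}$ for primitive $\zeta_d^*$. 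A Galois computation in $\QQ(\zeta_{2d})/\QQ(\zeta_d)$, verifying that both $\sqrt{2}$ and $\sqrt{\zeta_d^*}$ lie in the $-1$-eigenspace of the non-trivial automorphism, will show $\sqrt{2\zeta_d^*} \in \QQ(\zeta_d)$ when $\ord_2(d)=2$ (so $K_f = \QQ(\zeta_d)$), and will place it in $\QQ(\zeta_d,\sqrt{-2})\setminus \QQ(\zeta_d)$ when $\ord_2(d)=1$ (so $K_f = \QQ(\zeta_d,\sqrt{-2})$). When $\ord_2(d)\geq 3$ both subcases coalesce because $\sqrt{2} \in \QQ(\zeta_d)$, uniformly giving $K_f = \QQ(\zeta_{2d})$. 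The hardest step will be this final Galois identification of the quadratic extension $\QQ(\zeta_d)(\sqrt{2\chi(p)})/\QQ(\zeta_d)$ for primitive $\chi(p)$, and matching it, according to $\ord_2(d)$, to the claimed field $\QQ(\zeta_d)$ or $\QQ(\zeta_d,\sqrt{-2})$.
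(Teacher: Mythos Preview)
Your plan is correct and in fact proves more than the stated theorem: by analysing the joint image of $(\bar\rho_f,\chi)$ in $S_4\times\langle\zeta_d\rangle$ via Goursat's lemma, you simultaneously establish the refinement (Theorem~\ref{S4ref}) that tells \emph{which} of the two alternatives occurs, according to whether $\mathrm{sgn}\circ\bar\rho_f=\chi^{d/2}$.

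The paper's proof of Theorem~\ref{mainS4general} by itself is more elementary and avoids the joint-image analysis entirely. It works purely with density inequalities: for each odd prime power $\ell^e\Vert d$ one has $d(Q_{\ell^e})\ge 2/3>15/24=d(R_2\sqcup R_4)$, forcing $Q_{\ell^e}\cap(R_1\sqcup R_3)\neq\emptyset$ and hence $\zeta_{d'}\in K_f$; and $d(Q_{2^k})=1/2>9/24=d(R_2)$, forcing $Q_{2^k}\cap(R_1\sqcup R_3\sqcup R_4)\neq\emptyset$, after which a short case split on whether this intersection meets $R_1\sqcup R_3$ or only $R_4$ yields the two alternatives. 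The Goursat analysis is postponed to the separate refinement in \S\ref{S4refpf}. So your route trades a shorter proof of the bare statement for a single argument that also pins down the dichotomy; the paper's route isolates the density mechanism first and brings in the $\mathrm{sgn}$--$\chi^{d/2}$ comparison only when it is needed to distinguish the two cases. One small clean-up: your detour through $L_2=\QQ(\zeta_M)$ and a general quadratic $\epsilon$ is unnecessary, since Goursat applied directly to $S_4\times\langle\zeta_d\rangle$ shows the image is either the full product or the index-$2$ subgroup cut out by $\mathrm{sgn}=\chi^{d/2}$, with no third possibility.
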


\begin{rem}
\label{rem_twists}
For any exotic type and any even positive integer $d$, there exists a weight one  newform of the specified type whose nebentypus has order $d$.
This can be shown by twisting with an appropriate Dirichlet character, as follows: 

Let $g$ be a weight one exotic newform of the specified type whose nebentypus has order $2$ (the existence of such a newform $g$ can easily be verified, for example, by checking LMFDB \cite{LMFDB}). 
Take a prime $p$ such that $p \equiv 1 \pmod{2d}$ and $p$ is coprime to the level of $g$.
Then there exists a Dirichlet character $\psi$ of order $2d$ with conductor $p$, and we consider the weight one newform $f$ corresponding to $g \otimes \psi$, that is, $a_q(f) = \psi(q)a_q(g)$ for almost all primes $q$.  
Since $\rho_f \simeq \rho_g \otimes \psi$, the newform $f$ has the same type as $g$, and its nebentypus has order $d$, as follows from $\det(\rho_f) = \det(\rho_g)  \psi^2$. 
\end{rem}

\begin{rem}
       It follows from a result of Hecke and Strum that the Hecke algebra $\bZ[T_n \mid n \in \bZ_{>0}] \subset \mathrm{End}(S_1(N, \chi))$ 
   is generated as a $\bZ[\chi]$-algebra by $T_1$ and $T_p$ for all prime $p$ satisfying 
   \[
   p \leq \frac{1}{12}[\SL_2(\bZ) : \Gamma_0(N)]. 
   \]
   See \cite[Proposition 8.2.3]{Computing_classical_modular_forms_2011} for example. 
   Since the newform $f$ defines a surjective ring homomorphism 
   $\bQ[T_n \mid n \in \bZ_{>0}] \twoheadrightarrow  K_f; T_n \mapsto a_n(f)$, the Hecke field $K_f$ is generated as a $\bQ(\chi)$-algebra by $a_p(f)$ for all prime $p \leq [\SL_2(\bZ) : \Gamma_0(N)]/12$.  
\end{rem}


\begin{rem}
Throughout this remark, we assume that the level $N$ of $f$ is square-free.
Under this assumption, the possible orders of $\chi$ are highly restricted, and this enables us to obtain more explicit results, as follows (see Corollaries \ref{mainA4sqfree}, \ref{mainA5sqfree}, and \ref{mainS4sqfree}).
\begin{itemize}
    \item[(i)] If $f$ is of $A_4$-type, then $d = 6$ and $K_f=\QQ(\zeta_{12})$. 
    \item[(ii)] If $f$ is of $A_5$-type, then $d\in \{2,6,10,30\}$ and  
    \[
    K_f = 
    \begin{cases}
        \QQ(\zeta_{4}, \sqrt{5})  & \textrm{ if } \, d=2, 
        \\
        \QQ(\zeta_{12}, \sqrt{5}) & \textrm{ if } \, d=6, 
        \\
        \QQ(\zeta_{20}) & \textrm{ if } \, d=10, 
        \\
        \QQ(\zeta_{60}) & \textrm{ if } \, d=30. 
    \end{cases}
    \] 
    \item[(iii)] If $f$ is of $S_4$-type, then $d\in \{2,4,6,12\}$ and 
    \[
    K_f = 
    \begin{cases}
        \QQ(\sqrt{-2}) \, \textrm{ or } \, \QQ(\zeta_8)  
        & \textrm{ if } \, d=2, 
        \\
        \QQ(\zeta_4) \, \textrm{ or } \, \QQ(\zeta_8) 
        & \textrm{ if } \, d=4, 
        \\
        \QQ(\sqrt{-2}, \zeta_3) \, \textrm{ or } \, \QQ(\zeta_{24}) 
        & \textrm{ if } \, d=6, 
        \\
        \QQ(\zeta_{12}) \, \textrm{ or } \, \QQ(\zeta_{24}) 
        & \textrm{ if } \, d=12. 
    \end{cases}
    \] 
\end{itemize}
Moreover, a simple check of LMFDB \cite{LMFDB} shows that each list contains at least one explicit example.
\end{rem}

In Theorem \ref{intro_mainS4general}, when $\mathrm{ord}_2(d) \leq 2$, two distinct Hecke fields may occur.
A natural question is how one can distinguish between these two possibilities. 
We provide an answer to this question in the following sense.
The details are given in \S\ref{S4refpf}.

\begin{thm}[Theorem \ref{thm:S4_refinement}]
Let $\overline{\rho}_f \colon G_\bQ \to \PGL_2(\bC)$ denote the projective Galois representation associated with $f$. 
If $f$ is of $S_4$-type, i.e, $\overline{\rho}_f(G_\bQ) \simeq S_4$, then the following hold: 
\begin{itemize}
    \item[(i)] If $\chi^{d/2} \neq \operatorname{sgn}\circ \bar{\rho}_f$ as $G_\QQ$-representations, then 
        \[
    K_f = \begin{cases}
        \QQ(\zeta_{4d}) & \textrm{ if } \, \mathrm{ord}_2(d)=1, 
        \\
        \QQ(\zeta_{2d}) & \textrm{ if } \, \mathrm{ord}_2(d)=2.  
    \end{cases}
    \]
    \item[(ii)] If $\chi^{d/2} = \operatorname{sgn}\circ \bar{\rho}_f$ as $G_\QQ$-representations, then
    \[
    K_f = \begin{cases}
        \QQ(\zeta_{d},\sqrt{-2}) & \textrm{ if } \, \mathrm{ord}_2(d)=1, 
        \\
        \QQ(\zeta_{d}) & \textrm{ if } \, \mathrm{ord}_2(d)=2.  
    \end{cases}
    \]
    \end{itemize}
\end{thm}



\begin{rem}
In his forthcoming work, Yu Miyazawa narrows down the possible Hecke fields
for weight one newforms of dihedral type. 
Combining his results with ours, we conclude that for each positive integer $n$, only finitely many number fields of degree at most $n$ arise as Hecke fields associated with weight one newforms.
\end{rem}


As an application of the above results, for each type of the weight one exotic newform $f$, we can compute the Dirichlet density of the set $\mathcal{P}_f$ of primes defined by
\[
\mathcal{P}_f := \{ p \mid \gcd(p,N)=1 \text{ and } K_f = \QQ(a_p(f)) \}.
\]
In what follows, let $\varphi$ denote the Euler's totient function.

\begin{thm}[{Theorem \ref{thm:density_A_4_case}}]\label{intro:density_A_4_case}
Suppose that $f$ is of $A_4$-type. 
Then the Dirichlet density $d(\cP_f)$ of the set $\cP_f$ is given by 
\begin{align*}
    d(\cP_f) = 
    \begin{cases}
        \dfrac{3 \varphi(d)}{4d} & \textrm{if} \quad  \ker(\chi) \not\subset \ker(G_\bQ \stackrel{\overline{\rho}_f}{\twoheadrightarrow} A_4 \twoheadrightarrow A_4^{\rm ab}), 
        \\
        \\
        \dfrac{\varphi(d)}{d} &   \textrm{if} \quad \ker(\chi) \subset \ker(G_\bQ \stackrel{\overline{\rho}_f}{\twoheadrightarrow} A_4 \twoheadrightarrow A_4^{\rm ab}). 
    \end{cases}
\end{align*}
\end{thm}

\begin{thm}[{Theorem \ref{thm:density_A_5_case}}]\label{intro:density_A_5_case}
Suppose that $f$ is of $A_5$-type. 
Then the Dirichlet density $d(\cP_f)$ of the set $\cP_f$ is given by 
\begin{align*}
    d(\cP_f) = 
    \begin{cases}
        \dfrac{2 \varphi(d)}{5d} & \textrm{if} \quad  5 \nmid d, 
        \\
        \\
        \dfrac{3\varphi(d)}{4d} &  \textrm{if} \quad  5 \mid d. 
    \end{cases}
\end{align*}
\end{thm}

\begin{thm}[{Theorem \ref{thm:density_S_4_case}}]\label{intro:density_S_4_case}
Suppose that $f$ is of $S_4$-type. 
\begin{itemize}
    \item[(i)] If $\mathrm{sgn} \circ \overline{\rho}_f  \neq \chi^{d/2}$ and $\mathrm{ord}_2(d) = 1$, 
then $\cP_f = \emptyset$, i.e., the Hecke field $K_f$ cannot be generated, as a $\mathbb{Q}$-algebra, by a single eigenvalue $a_p(f)$. 
    \item[(ii)] If $\mathrm{sgn} \circ \overline{\rho}_f  \neq \chi^{d/2}$ and $\mathrm{ord}_2(d) \geq 2$, 
then  the Dirichlet density $d(\cP_f)$ of the set $\cP_f$ is given by 
\begin{align*}
    d(\cP_f) = 
    \begin{cases}
        \dfrac{3 \varphi(d)}{8d} & \textrm{if} \quad  \mathrm{ord}_2(d) = 2, 
        \\
        \\
        \dfrac{5\varphi(d)}{8d} &  \textrm{if} \quad  \mathrm{ord}_2(d) \geq 3. 
    \end{cases}
\end{align*}
    \item[(iii)] If $\mathrm{sgn} \circ \overline{\rho}_f  = \chi^{d/2}$, 
then  the Dirichlet density $d(\cP_f)$ of the set $\cP_f$ is given by 
\begin{align*}
    d(\cP_f) = 
    \begin{cases}
        \dfrac{\varphi(d)}{2d} & \textrm{if} \quad  \mathrm{ord}_2(d) \neq 2, 
        \\
        \\
        \dfrac{7\varphi(d)}{8d} &  \textrm{if} \quad  \mathrm{ord}_2(d) = 2. 
    \end{cases}
    \end{align*}
\end{itemize}
\end{thm}

\begin{rem}
By the same method as in the proofs of Theorems~\ref{intro:density_A_4_case}, 
\ref{intro:density_A_5_case}, and \ref{intro:density_S_4_case}, 
the Dirichlet density of primes $p$ satisfying
$K_f = \mathbb{Q}(\chi)(a_p(f))$ can also be computed
(see Remark~\ref{rem:density_Q(chi)(a_p(f))}). 
\end{rem}

\begin{rem}
    \label{rem_KSW_density1}
In \cite[Corollary~1.1]{KSW08}, Koo--Stein--Wiese proved that if a non-CM newform
$g \in S_k(N,\chi)$ of weight $k \geq 2$ has no nontrivial inner twists,
then the Hecke field of $g$ is generated, as a $\mathbb{Q}$-algebra, by a single Hecke eigenvalue
$a_p(g)$ for a set of primes $p$ of density one.
The case where $g$ admits a nontrivial inner twist is treated in \cite[Corollary~1.4]{KSW08}. 
Our results, Theorems~\ref{intro:density_A_4_case}, \ref{intro:density_A_5_case}, and \ref{intro:density_S_4_case},
can be viewed as a weight one counterpart of their theorem.
See also \cite{Choi-Taguchi16} and \cite{Kumar-Sahoo23} for related results. 
 \end{rem}

As noted in Remark \ref{rem_twists}, twisting by Dirichlet characters allows the order of the nebentypus of a newform to vary almost arbitrarily.
This motivates a restriction to twist-minimal newforms.
However, even among twist-minimal newforms within the same twist class, the orders of the nebentypus need not coincide.
Accordingly, in each twist class, we would like to single out a twist-minimal newform whose nebentypus has minimal order.
To make this notion explicit, we introduce the notion of a strongly minimal newform in \S\ref{sec:Strongly minimal newforms} (see Definition \ref{def:strongly_minimal}). 
Roughly speaking, a newform $f$ is strongly minimal if it is twist-minimal and its nebentypus $\chi$ is minimally ramified at every prime among all twist-minimal newforms in the same twist class.
In \S\ref{sec:Strongly minimal newforms}, we show that the order of the nebentypus of a strongly minimal newform is subject to restrictions.

\begin{thm}[{Theorem \ref{thm:minimal_order}}]\label{intro:order_of_nebentypus_of_strongly_minimal_newform}
        Suppose that the weight one exotic newform $f$ is strongly minimal.  
    \begin{itemize}
        \item[(i)] If $f$ is of $A_4$-type, then the order $d$ of the nebentypus  takes the form $3 \cdot 2^k$ with $k \geq 1$. 
        \item[(ii)] If $f$ is of $A_5$-type, then  the order $d$ of the nebentypus  takes the form $2^{k} \cdot 3^{\delta_3} \cdot 5^{\delta_5}$ with $k \geq 1$ and $\delta_3, \delta_5 \in \{0,1\}$ 
        \item[(iii)] If $f$ is of $S_4$-type, then the order $d$ of the nebentypus  takes the form $2^{k}$ or $3 \cdot 2^k$ with $k \geq 1$. 
    \end{itemize}
\end{thm}

Furthermore, we show that, when $f$ is of $A_4$-type or $S_4$-type, there exists a strongly minimal newform realizing each of the possible orders of the nebentypus described above.

\begin{thm}[{Propositions \ref{prop:exicentece_A4_minimal}, \ref{prop:exicentece_S4_minimal_2^k}, and \ref{prop:exicentece_S4_minimal_3*2^k}}]\label{intro:order_of_nebentypus_of_strongly_minimal_newform_existence_A_4_and_S_4}
For any positive integer $k$, there exist strongly minimal newforms with the following types and orders of nebentypus: 
\begin{itemize}
    \item[(i)] $A_4$-type with nebentypus of order $3 \cdot 2^k$,
    \item[(ii)] $S_4$-type with nebentypus of order $2^k$,
    \item[(iii)] $S_4$-type with nebentypus of order $3 \cdot 2^k$.
\end{itemize}
\end{thm}

\begin{rem}
We expect a similar statement to hold for weight one newforms of $A_5$-type; however, it seems difficult to establish this using the same arguments as in the proof of Theorem \ref{intro:order_of_nebentypus_of_strongly_minimal_newform_existence_A_4_and_S_4}. 
\end{rem}

Finally, in \S\ref{sec:group_structure_of_the_image_of_rho_f}, we completely determine the group structure of the Galois image $\rho_f(G_\bQ)$.

\begin{thm}[{Theorem \ref{thm:Galois_image_A_4_type} and Proposition \ref{prop:A4_criteriaon_K_neq_G}}]\label{intro:galis_image_classification_A_4}
Let $I$  denote the $2\times 2$ identity matrix. 
Suppose that $f$ is of $A_4$-type. 
\begin{itemize}
    \item[(i)] If $\ker(\chi) \not\subset \ker(G_\bQ \stackrel{\overline{\rho}_f}{\twoheadrightarrow} A_4 \twoheadrightarrow A_4^{\rm ab} )$, then the Galois image $\rho_f(G_\QQ)$ is isomorphic to the group 
    \[
    (\SL_2(\bF_3) \times \ZZ/2d\ZZ)/\langle (-I, d) \rangle.
    \]
    \item[(ii)] If $\ker(\chi) \subset \ker(G_\bQ \stackrel{\overline{\rho}_f}{\twoheadrightarrow} A_4 \twoheadrightarrow A_4^{\rm ab} ) $, then the integer $d$ is divisible by $3$, and the Galois image $\rho_f(G_\QQ)$ is isomorphic to the group 
    \[
    (\SL_2(\bF_3) \times_{\ZZ/3\ZZ} \ZZ/2d\ZZ)/\langle (-I, d) \rangle.
    \]
Here, the homomorphism $\SL_2(\FF_3) \twoheadrightarrow \ZZ/3\ZZ$ used in the fiber product is defined as the composition
\[
\SL_2(\FF_3) \twoheadrightarrow \SL_2(\FF_3)^{\mathrm{ab}}  \simeq \ZZ/3\ZZ.
\]
\end{itemize}
\end{thm}

\begin{thm}[{Theorem \ref{thm:Galois_image_A_5_type}}]\label{intro:galis_image_classification_A_5}
Let $I$  denote the $2\times 2$ identity matrix. 
If $f$ is of $A_5$-type, then the Galois image $\rho_f(G_\QQ)$  is isomorphic to the group 
    \[
    (\SL_2(\bF_5) \times \ZZ/2d\ZZ)/\langle (-I, d) \rangle.
    \]
\end{thm}

\begin{thm}[{Theorem \ref{thm:Galois_image_S_4_type} and Proposition \ref{prop:S4_criteriaon_K_neq_G}}]\label{intro:galis_image_classification_S_4}
Let $BO_{48}$ be the binary dihedral group of order $48$ and 
$z \in BO_{48}$ denotes the unique element of order $2$ (see Remark \ref{rem:double_cover_S_4}). 
Suppose that $f$ is of $S_4$-type. 
\begin{itemize}
    \item[(i)] If $\chi^{d/2} \neq \operatorname{sgn}\circ \bar{\rho}_f$, then the Galois image $\rho_f(G_\QQ)$  is isomorphic to the group 
    \[
    (BO_{48} \times \ZZ/2d\ZZ)/\langle (z, d) \rangle. 
    \]
    \item[(ii)] If $\chi^{d/2} = \operatorname{sgn}\circ \bar{\rho}_f$, then the Galois image $\rho_f(G_\QQ)$  is isomorphic to the group 
    \[
(BO_{48} \times_{\bZ/2\bZ} \ZZ/2d\ZZ)/\langle (z, d) \rangle.
    \]
    Here, the homomorphism $BO_{48} \twoheadrightarrow \ZZ/2\ZZ$ used in the fiber product is defined as the composition
\[
BO_{48} \twoheadrightarrow BO_{48}^{\rm ab} \simeq  \ZZ/2\ZZ.
\]
\end{itemize}
\end{thm}

\subsection*{Acknowledgement}
The second author would like to thank Takeshi Ogasawara and Koji Shimuzu for helpful suggestions. 
RS was supported by JSPS KAKENHI Grant Number 24K16886.

\section{Preliminaries}\label{prelim}

\subsection{Projective Galois representation}

\begin{lem}[{\cite[Lemma 1]{BL}}]\label{lem:BL_c(g)}
For any matrix $g \in \gl_2(\bC)$, if the image  of $g$ in $\pgl_2(\bC)$ has order $n$, then 
\[
\frac{\mathrm{trace}(g)^2}{\det(g)} =  2+\zeta_n+\zeta_n^{-1}
\]
  for some primitive $n$-th root  of unity $\zeta_n$. 
\end{lem}
\begin{proof}
Since $\bar g$ has order $n$, the ratio of the two eigenvalues of the matrix $g$ is a primitive $n$-th root of unity. This fact immediately implies the lemma. 
\end{proof}




For each prime $p$, let $I_p \subset G_{\QQ_p}$ denote the  inertia subgroup at $p$. 

\begin{lem}\label{lem:order_character}
The order of a continuous character  $\mu \colon G_{\QQ} \to \CC^{\times}$ is equal to the least common multiple of the positive integers $|\mu(I_{p})|$ for all prime $p$. 
\end{lem}
\begin{proof}
This lemma is a consequence of the fact that the Galois group of a cyclotomic extension over $\mathbb{Q}$ is generated by its inertia groups. 
\end{proof}

 For any complex 2-dimensional Galois representation $\rho \colon G_\QQ \to \gl_2(\CC)$,  we write $\bar{\rho} \colon G_\QQ\to \pgl_2(\CC)$ for the projective Galois representation attached to $\rho$. 
 
The following result was proved by Nakazato in \cite{NH80}.
However, for the sake of completeness of the present paper, we include a proof.
Our proof of the following lemma is based on that of \cite[Theorem~7]{Se}, which treats the case of a prime conductor.

\begin{lem}[{\cite[Lemma 1]{NH80}}]\label{Serre}
Let $V$ be a $2$-dimensional $\bC$-vector space, and 
let $\rho \colon G_\QQ\to \gl(V)$ be a continuous irreducible odd Galois representation with square-free conductor $N$. Set $\chi := \det \rho$.
    Then the following hold.
    \begin{itemize}
        \item[(i)] If $\bar{\rho}(G_\QQ)$ is isomorphic to $A_4$, then $\chi$ has order $6$. 
        \item[(ii)] If $\bar{\rho}(G_\QQ)$ is isomorphic to $S_4$, then $\chi$ has order $2$, $4$, $6$, or $12$
        \item[(iii)] If $\bar{\rho}(G_\QQ)$ is isomorphic to $A_5$, then $\chi$ has order $2$, $6$, $10$, or $30$.
    \end{itemize}
\end{lem}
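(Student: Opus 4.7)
The plan is to combine a local ramification analysis at each $p \mid N$ with the restricted orders of cyclic subgroups of $A_4$, $S_4$, $A_5$.

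First, the square-free assumption on $N$ means that the conductor exponent of $\rho$ at each $p \mid N$ equals $1$; by the Artin conductor formula this forces the ramification to be tame and $\dim V^{I_p} = 1$. Hence $\rho(I_p)$ is cyclic, generated after conjugation by $\mathrm{diag}(1,\alpha_p)$ for some nontrivial root of unity $\alpha_p$, so $\chi|_{I_p} = \det\rho|_{I_p}$ has order equal to the order of the cyclic subgroup $\bar{\rho}(I_p)$ of the projective image.

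Second, each $\chi|_{I_p}$ being nontrivial, $\chi$ has conductor exactly the square-free integer $N$, so by the Chinese remainder decomposition of $(\ZZ/N\ZZ)^\times$ the order of $\chi$ is the lcm of the orders of its local components $\chi|_{I_p}$. The possible orders of cyclic subgroups of $A_4$, $S_4$, $A_5$ are $\{1,2,3\}$, $\{1,2,3,4\}$, $\{1,2,3,5\}$ respectively, so the order of $\chi$ divides $6$, $12$, $30$. Since $\rho$ is odd, this order is even, leaving $\{2,6\}$, $\{2,4,6,12\}$, $\{2,6,10,30\}$; this already establishes (ii) and (iii).

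Finally, to finish (i), I would rule out the possibility of $\chi$ having order $2$ when the projective image is $A_4$. If this held, every $\bar{\rho}(I_p)$ would have order dividing $2$ and would lie in the normal Klein four-subgroup $V \triangleleft A_4$. Writing $K$ for the fixed field of $\ker\bar{\rho}$, the subfield $K^V$ would then be a cyclic cubic extension of $\QQ$ unramified at every finite prime, contradicting Minkowski's theorem. The part requiring the most care is the conductor analysis at $p = 2$ in the first step, where tame ramification is not automatic, but a conductor exponent of $1$ leaves no room for a positive Swan contribution and so ramification is forced to be tame even there.
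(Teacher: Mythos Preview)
Your argument is correct and follows essentially the same route as the paper: from the square-free conductor you deduce $\dim V^{I_p}=1$ and hence $\bar{\rho}(I_p)\simeq \chi(I_p)$ is a cyclic subgroup of the projective image, then assemble the local orders into the global order of $\chi$ and use oddness. Your exclusion of order $2$ in the $A_4$ case via Minkowski (an everywhere-unramified cubic field) is exactly the paper's argument stated contrapositively---the paper picks a prime $q$ ramified in the cubic subfield and reads off $3\mid |\chi(I_q)|$---and your extra remarks on tameness at $p=2$ and on the conductor of $\chi$ only make explicit what the paper leaves implicit.
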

\begin{proof}
    Let $p$ be a prime dividing $N$. Since $N$ is square-free, we have $\dim_\CC V^{I_p} = 1$.
    This in turn implies that $V|_{I_p}\simeq 1\oplus \chi|_{I_p}$, and hence  $\bar{\rho}(I_p)\simeq \rho(I_p)\simeq \chi(I_p)$. 
    Therefore, Lemma \ref{lem:order_character} shows that 
    \[
    \mathrm{ord}(\chi) = \lcm\{|\chi(I_p)|\}_{p\mid N} =  \lcm\{|\bar{\rho}(I_p)|\}_{p\mid N}. 
    \]
    Note that, in this case, the group $\bar{\rho}(I_p)$ is cyclic for any prime $p$. 
    \begin{itemize}
        \item[(i)] Suppose that $\bar{\rho}(G_\QQ)$ is isomorphic to $A_4$. 
        Since $\bar\rho(I_p)$ injects into $A_4$, 
        the group $\bar\rho(I_p)$ is cyclic of order $1$, $2$, or $3$. Also, because $\rho$ is odd, we have $\mathrm{ord}(\chi) \in \{2,6\}$. We claim that $\mathrm{ord}(\chi) = 6$. Since $\bar{\rho}(G_\QQ)$ is isomorphic to $A_4$, the fixed field of $\ker \bar\rho$ contains a cyclic cubic extension $K/\QQ$ corresponding to the Klein four-group in $A_4$. Let $q$ be a prime ramified in $K$. Then $q \mid N$ and $\bar{\rho}(I_q)$ surjects onto $\operatorname{Gal}(K/\QQ)$. Thus $3$ divides $|\bar{\rho}(I_q)|$ and so the order of $\chi$ is $6$.  
        
        \item[(ii)] Suppose that $\bar{\rho}(G_\QQ)$ is isomorphic to $S_4$. Since  $\bar{\rho}(I_p)$ injects into $S_4$, the group $\bar{\rho}(I_p)$ is cyclic of order $1$, $2$, $3$, or $4$. Combined with the fact that $\chi$ has even order, this implies that $\mathrm{ord}(\chi)$ is $2$, $4$, $6$, or $12$.
        
        \item[(iii)] Suppose that $\bar{\rho}(G_\QQ)$ is isomorphic to $A_5$. Since $\bar{\rho}(I_p)$ injects into $A_5$, the group $\bar{\rho}(I_p)$ is cyclic of order $1$, $2$, $3$, or $5$. 
        Since $\chi$ has even order, it follows that $\mathrm{ord}(\chi)$ is  $2$, $6$, $10$, or $30$.
    \end{itemize}
\end{proof}

\subsection{Density of $Q_m(\chi)$} 


 For any positive integer $N$, using the canonical isomorphism
    \[
 (\ZZ/N\ZZ)^\times \stackrel{\sim}{\to} \operatorname{Gal}(\QQ(\zeta_N)/\QQ); \, p  \mapsto (\mathrm{Frob}_p \colon \zeta_N \mapsto \zeta_N^p), 
 \]
we identify, as usual, a Dirichlet character $\chi \colon \ZZ/N\ZZ \to \CC$ of conductor dividing $N$ with a Galois character $\chi \colon  \operatorname{Gal}(\QQ(\zeta_N)/\QQ)\to \CC^\times$. 

\begin{dfn}
For any integer $N$, let $\cP_N$ denote the set of all primes that are coprime to $N$.
For any positive integer $m$ and any character $\chi \colon \operatorname{Gal}(\QQ(\zeta_N)/\QQ)\to \CC^\times$,  we define a set $Q_m(\chi)$ of primes by 
    \[
    Q_m(\chi) := \left\{p \in \cP_N  \mid 
    \textrm{$\ord(\chi(p))$ is divisible by $m$} 
    \right\}. 
    \]
Throughout, when $\chi$ is clear from context, we denote $Q_m$ simply by omitting $\chi$.
\end{dfn}

\begin{lem}
\label{density}
    Let $\chi \colon \gal(\QQ(\zeta_N)/\QQ) \to \CC^\times$ be a character of order $d$, and take  a prime divisor  $\ell$ of $d$.
    Write $d=\ell^ed'$ with $(\ell,d')=1$. Then, the Dirichlet density $d(Q_{\ell^e}(\chi))$ of $Q_{\ell^e}(\chi)$ is $1-\frac{1}{\ell}$. 
\end{lem}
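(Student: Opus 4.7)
The plan is to reduce the problem to counting generators of a cyclic group, via the standard decomposition of $\chi$ into its $\ell$-primary and prime-to-$\ell$ parts.

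First, since $\CC^\times$ is divisible (in particular, the group of roots of unity admits a decomposition by primary components), we can factor uniquely
\[
\chi = \chi_\ell \cdot \chi',
\]
where $\chi_\ell$ has order exactly $\ell^e$ and $\chi'$ has order exactly $d'$. Concretely, choose integers $a, b$ with $a \ell^e + b d' = 1$ and set $\chi' := \chi^{a\ell^e}$ and $\chi_\ell := \chi^{bd'}$. For any prime $p \in \cP_N$, the element $\chi_\ell(p)$ has order a power of $\ell$ (dividing $\ell^e$), while $\chi'(p)$ has order coprime to $\ell$. Because these orders are coprime,
\[
\ord(\chi(p)) = \ord(\chi_\ell(p)) \cdot \ord(\chi'(p)),
\]
so the condition $\ell^e \mid \ord(\chi(p))$ is equivalent to $\ord(\chi_\ell(p)) = \ell^e$, i.e.\ to $\chi_\ell(p)$ being a \emph{primitive} $\ell^e$-th root of unity. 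Hence
\[
Q_{\ell^e}(\chi) = \bigl\{ p \in \cP_N \,\bigm|\, \chi_\ell(p) \in \mu_{\ell^e}^{\mathrm{prim}} \bigr\},
\]
where $\mu_{\ell^e}^{\mathrm{prim}}$ denotes the set of primitive $\ell^e$-th roots of unity, of cardinality $\varphi(\ell^e) = \ell^{e-1}(\ell-1)$.

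Next, the character $\chi_\ell$ factors through the Galois group of some cyclotomic extension and has image exactly $\mu_{\ell^e}$ (since its order is $\ell^e$). By Chebotarev's density theorem applied to this finite cyclic quotient (equivalently, by the equidistribution of Frobenius elements in $\gal(\QQ(\zeta_N)/\QQ)$), for each fixed $\zeta \in \mu_{\ell^e}$ the set $\{ p \in \cP_N : \chi_\ell(p) = \zeta\}$ has Dirichlet density $1/\ell^e$. Summing over the $\varphi(\ell^e)$ primitive $\ell^e$-th roots of unity gives
\[
d(Q_{\ell^e}(\chi)) = \frac{\varphi(\ell^e)}{\ell^e} = \frac{\ell^{e-1}(\ell-1)}{\ell^e} = 1 - \frac{1}{\ell}.
\]

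I do not anticipate a serious obstacle here; the only slightly delicate point is verifying that $\chi_\ell$ really does have image of full order $\ell^e$, which follows from the multiplicativity of orders for characters with coprime orders. Everything else is formal manipulation plus a direct application of Chebotarev to an abelian extension.
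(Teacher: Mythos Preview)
Your proof is correct and takes essentially the same approach as the paper: both use the Chinese remainder decomposition of the cyclic group of order $d$ and apply Chebotarev. The only cosmetic difference is that the paper applies Chebotarev to the fixed field of $\ker\chi$ and counts the $(\ell^e-\ell^{e-1})d'$ elements of $\ZZ/d\ZZ \simeq \ZZ/\ell^e\ZZ \times \ZZ/d'\ZZ$ whose order is divisible by $\ell^e$, whereas you first project onto the $\ell$-primary component $\chi_\ell$ and then apply Chebotarev to the smaller degree-$\ell^e$ extension; the resulting count $\varphi(\ell^e)/\ell^e$ is the same.
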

\begin{proof}
    Inside $\ZZ/d\ZZ\simeq \ZZ/\ell^e\ZZ\times \ZZ/d'\ZZ$, there are $(\ell^e-\ell^{e-1})d'$ elements of order divisible by $\ell^e$.
    By applying the Chebotarev density theorem to the fixed field of $\ker(\chi)$, 
    we obtain  
    \[d(Q_{\ell^e}(\chi))=\frac{(\ell^e-\ell^{e-1})d'}{d}=1-\frac{1}{\ell}\]
    as desired.
\end{proof}

\section{Hecke field $K_f$}
\label{sec:hecke}

Let $f =\sum_{n\geq 1} a_n(f) q^n\in S_1(N,\chi)$ be a weight one newform of level $N$ and nebentypus $\chi$ with $\chi(-1)=-1$,
and 
\[
\rho_f \colon G_\QQ \to \gl_2(\CC)
\]
denotes the continuous irreducible odd Galois representation associated with $f$. 
Recall that we write $d$ for the order of $\chi$, which is even  since $\chi(-1)=-1$.

\begin{dfn}
For any positive integer $m$, we define a set $R_m$ of primes  (depending on $f$)  by 
        \[ 
        R_m := \left\{p \in \cP_N \mid  \textrm{$\bar{\rho}_f(\mathrm{Frob}_p)$ has order $m$}   \right\}. 
        \]
\end{dfn}

\begin{lem}\label{lem:hecke_field_ap}
    For any prime $p \in \cP_N$, we have 
\[
        \QQ(a_p(f)) = \begin{cases}
\QQ( \sqrt{\chi(p)}) & \text{ if } \, p \in R_1 \sqcup R_3,\\
\QQ & \text{ if } \, p \in R_2,\\
\QQ( \sqrt{2\chi(p)}) & \text{ if } \, p \in R_4, \\
\QQ(\sqrt{5}, \sqrt{\chi(p)}) & \text{ if } \, p \in R_5. 
\end{cases}
\]
\end{lem}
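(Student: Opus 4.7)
The plan is to apply Lemma \ref{BL} directly. Since $\rho_f(\mathrm{Frob}_p)$ is a lift of $\bar{\rho}_f(\mathrm{Frob}_p)$ with trace $a_p(f)$ and determinant $\chi(p)$, the Buzzard--Lauder invariant is
\[
c(\bar{\rho}_f(\mathrm{Frob}_p)) \;=\; \frac{a_p(f)^2}{\chi(p)}.
\]
In each case $p \in R_m$, Lemma \ref{BL}(ii) (or (i)) supplies an explicit value for the left-hand side, giving a formula for $a_p(f)^2$, and we then read off $\QQ(a_p(f))$ by extracting square roots.

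For $p \in R_1, R_2, R_3, R_4$, the values $c = 4, 0, 1, 2$ yield
\[
a_p(f)^2 \;=\; 4\chi(p),\; 0,\; \chi(p),\; 2\chi(p),
\]
respectively. The case $R_2$ collapses to $a_p(f)=0$ and hence $\QQ(a_p(f)) = \QQ$; the remaining three cases immediately give the asserted fields $\QQ(\sqrt{\chi(p)})$, $\QQ(\sqrt{\chi(p)})$, and $\QQ(\sqrt{2\chi(p)})$.

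The one case requiring slightly more care is $p \in R_5$, where $c = (3 \pm \sqrt{5})/2$, so $a_p(f)^2 = \chi(p)(3 \pm \sqrt{5})/2$. The key observation is the identity
\[
\left(\frac{1 + \sqrt{5}}{2}\right)^{\!2} \;=\; \frac{3 + \sqrt{5}}{2}, \qquad \left(\frac{\sqrt{5}-1}{2}\right)^{\!2} \;=\; \frac{3 - \sqrt{5}}{2},
\]
so that $\sqrt{(3 \pm \sqrt{5})/2}$ lies in $\QQ(\sqrt{5})^\times$. Writing $a_p(f) = \pm\sqrt{\chi(p)}\cdot\sqrt{(3\pm\sqrt{5})/2}$ then gives the inclusion $\QQ(a_p(f)) \subseteq \QQ(\sqrt{5}, \sqrt{\chi(p)})$. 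For the reverse inclusion, $a_p(f)^2 = \chi(p)(3\pm\sqrt{5})/2 \in \QQ(a_p(f))$ forces $\sqrt{5} \in \QQ(a_p(f))$ (using $\chi(p) \in \QQ(\chi(p)) \subseteq \QQ(\sqrt{5}, \sqrt{\chi(p)})$ and that $\chi(p) \neq 0$); then multiplying $a_p(f)$ by the unit $\sqrt{(3\pm\sqrt{5})/2}^{\,-1} \in \QQ(\sqrt{5})$ shows $\sqrt{\chi(p)} \in \QQ(a_p(f))$.

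I do not foresee a genuine obstacle: the proof is essentially a mechanical application of Lemma \ref{BL} together with the basic fact that $\det \rho_f(\mathrm{Frob}_p) = \chi(p)$. The mildly non-trivial step is the identification of $\sqrt{(3\pm\sqrt{5})/2}$ with an element of $\QQ(\sqrt{5})$ in the $R_5$ case, which ensures that adjoining $a_p(f)$ produces only the expected quadratic extension $\QQ(\sqrt{5}, \sqrt{\chi(p)})$ rather than a degree-4 extension.
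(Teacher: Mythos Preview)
Your overall approach coincides with the paper's: apply Lemma~\ref{BL} to obtain $a_p(f)^2/\chi(p)$, solve for $a_p(f)$, and identify the generated field. The cases $R_1$ through $R_4$ are handled correctly, and in the $R_5$ case the inclusion $\QQ(a_p(f)) \subseteq \QQ(\sqrt{5},\sqrt{\chi(p)})$ via the identity $\bigl(\tfrac{\sqrt{5}\pm 1}{2}\bigr)^2 = \tfrac{3\pm\sqrt{5}}{2}$ is exactly what the paper does.

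However, your argument for the reverse inclusion in the $R_5$ case has a gap. You claim that $a_p(f)^2 = \chi(p)\,\tfrac{3\pm\sqrt{5}}{2} \in \QQ(a_p(f))$ ``forces $\sqrt{5} \in \QQ(a_p(f))$'', justifying this by the observation that $\chi(p) \in \QQ(\chi(p)) \subseteq \QQ(\sqrt{5},\sqrt{\chi(p)})$. But that inclusion only places $\chi(p)$ in the \emph{ambient} field, not in $\QQ(a_p(f))$; to divide $a_p(f)^2$ by $\chi(p)$ and extract $\sqrt{5}$, you would need $\chi(p) \in \QQ(a_p(f))$, which you have not established. The reasoning is circular: once $\sqrt{5} \in \QQ(a_p(f))$ one easily gets $\chi(p) \in \QQ(a_p(f))$ and conversely, but you have not broken into this loop. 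The paper resolves this by exploiting that $\chi(p)$ is a root of unity of order dividing $d$: raising to the $2d$-th power gives
\[
a_p(f)^{2d} \;=\; \chi(p)^{d}\Bigl(\tfrac{\sqrt{5}\pm 1}{2}\Bigr)^{2d} \;=\; \Bigl(\tfrac{\sqrt{5}\pm 1}{2}\Bigr)^{2d},
\]
which lies in $\QQ(\sqrt{5})\setminus\QQ$ (since $\tfrac{1+\sqrt{5}}{2}$ is a unit of infinite order), whence $\sqrt{5}\in\QQ(a_p(f))$. After inserting this step, your final sentence recovering $\sqrt{\chi(p)}$ is correct.
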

\begin{proof}
Lemma \ref{lem:BL_c(g)}, together with the properties of the Galois representation $\rho_f$, shows that
\[
\frac{a_p(f)^2}{\chi(p)} = 
\begin{cases}
4 & \text{ if } \, p \in R_1,\\
0 & \text{ if } \, p \in R_2,\\
1 & \text{ if } \, p \in R_3, \\
2 & \text{ if } \, p \in R_4,\\
\dfrac{3 \pm \sqrt{5}}{2} & \text{ if } \, p \in R_5. 
\end{cases}
\]
When $p \in R_1 \sqcup R_2 \sqcup R_3 \sqcup R_4$, this lemma is an immediate consequence of this formula. 
Let us consider the case where $p \in R_5$. 
Since $\sqrt{\frac{3\pm \sqrt{5}}{2}}=\frac{\sqrt{5}\pm 1}{2}$, we obtain 
\[
\QQ(a_p(f)) = \QQ((\sqrt{5}\pm 1)\sqrt{\chi(p)} ). 
\]
Moreover,   
\[
(\sqrt{5} \pm 1)^{2d} = \left((\sqrt{5} \pm 1)\sqrt{\chi(p)} \right)^{2d} \in \QQ(a_p(f)), 
\]
so that $\sqrt{5} \in \QQ(a_p(f))$. 
It then follows that  
\[
\sqrt{\chi(p)} = \frac{1}{4}\left(\sqrt{5}(\sqrt{5} \pm 1)\sqrt{\chi(p)} \mp (\sqrt{5} \pm 1)\sqrt{\chi(p)}\right) \in \QQ(a_p(f)), 
\]
and hence $\QQ(a_p(f)) = \QQ(\sqrt{5}, \sqrt{\chi(p)})$. 
\end{proof}

\subsection{$A_4$-case}\label{mainA4pf}


In this subsection, we determine the Hecke fields of weight one newforms of $A_4$-type.

\begin{thm}\label{thm:mainA4general} 
If $f$ is of $A_4$-type, then $K_f=\QQ(\zeta_{2d})$.
\end{thm}  
\begin{proof}
 Since $f$ is of $A_4$-type, 
 for any prime $p \in \cP_N$, the element  $\bar{\rho}_f(\mathrm{Frob}_p)\in \pgl_2(\CC)$  has order $1$, $2$, or $3$. 
 Thus we have $\cP_N = R_1\sqcup R_2\sqcup R_3$. 
As $d$ denotes the order of $\chi$, it follows from Lemma \ref{lem:hecke_field_ap} that $K_f \subset \QQ(\zeta_{2d})$.



We now prove the converse inclusion. 
Applying the Chebotarev density theorem to $M/\QQ$, where $M$ is the fixed field of $\ker \bar\rho_f$, we obtain 
 \[
 d(R_1)=\frac{1}{12}, \quad d(R_2)=\frac{3}{12}, \quad d(R_3)=\frac{8}{12},
 \]
 where $d(R_m)$ denotes the Dirichlet density of $R_m$.  
 Let $\ell$ be any prime factor of $d$ and denote by $e := \mathrm{ord}_\ell(d)$  the $\ell$-adic order of $d$. 
 By Lemma \ref{density}, the set $Q_{\ell^e} = Q_{\ell^e}(\chi)$ of primes has  density $1-\frac{1}{\ell}$,
 which is greater than $d(R_2)=\frac{3}{12}$.
 Thus $Q_{\ell^e}\not\subset R_2$, and so $Q_{\ell^e}\cap (R_1\sqcup R_3)\neq \emptyset$. Considering $a_p(f)$ for any prime $p\in Q_{\ell^e}\cap (R_1\sqcup R_3)$, we have, from Lemma \ref{lem:hecke_field_ap}, 
 \[
 \QQ(a_p(f)) = \QQ(\sqrt{\chi(p)})
 \]
 with $\ell^e \mid \ord(\chi(p))$. In particular, $\QQ(\zeta_{2\ell^e})\subset K_f$. Since $\ell$ is an arbitrary prime factor of $d$, it follows that $\QQ(\zeta_{2d})\subset K_f$. 
\end{proof}

\begin{cor}\label{mainA4sqfree}
Suppose that $f$ is of $A_4$-type and the level $N$ of $f$ is  \textit{square-free}. 
 Then $d = 6$ and  $K_f=\QQ(\zeta_{12})$. 
\end{cor}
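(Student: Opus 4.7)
The plan is to deduce the corollary directly by combining the two main ingredients already in hand: Theorem \ref{mainA4general}, which expresses $K_f$ as $\QQ(\zeta_{2d})$ purely in terms of $d = \ord(\chi)$, and Lemma \ref{Serre}(i), which pins down the possible values of $d$ once the conductor of $\rho_f$ is known to be square-free.

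First I would observe that for a weight-one newform $f \in S_1(N,\chi)$, the Artin conductor of the associated Deligne--Serre representation $\rho_f$ coincides with the level $N$. Thus the hypothesis that $N$ is square-free translates directly into the square-freeness of the conductor of $\rho_f$, placing us precisely in the setting of Lemma \ref{Serre}. Since $\bar\rho_f(G_\QQ) \simeq A_4$ and $\det \rho_f = \chi$, part (i) of Lemma \ref{Serre} forces $d = 6$.

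Plugging $d = 6$ into Theorem \ref{mainA4general} then yields $K_f = \QQ(\zeta_{12})$, as claimed. There is essentially no obstacle here: the only point that is not fully spelled out in the excerpt is the identification of the level $N$ of $f$ with the Artin conductor of $\rho_f$, but this is a standard fact about Deligne--Serre representations attached to weight-one newforms and requires no additional argument. The corollary is therefore an immediate specialization of the two results cited.
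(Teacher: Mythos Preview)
Your proposal is correct and follows essentially the same approach as the paper: invoke the Deligne--Serre result that the Artin conductor of $\rho_f$ equals the level $N$, apply Lemma~\ref{Serre}(i) to force $d=6$, and then conclude via Theorem~\ref{mainA4general}. The paper's proof is identical, only citing \cite[Th\'eor\`eme 4.1]{DS} explicitly for the conductor-equals-level fact that you describe as standard.
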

\begin{proof}
    By \cite[Th\'eor\`eme 4.1]{DS}, the Galois representation $\rho_f$ has conductor $N$, and so Lemma \ref{Serre} implies that $\chi$ has order $6$. Therefore, the assertion follows from Theorem \ref{thm:mainA4general}.
\end{proof} 
%

\subsection{$A_5$-case}\label{thm:mainA5pf}

In this subsection, we determine the Hecke fields of weight one newforms of $A_5$-type.

\begin{thm}\label{thm:mainA5general}
If $f$ is of $A_5$-type, then $K_f=\QQ(\zeta_{2d}, \sqrt{5})$.
\end{thm}
\begin{proof}
Since $f$ is of $A_5$-type, the element $\bar{\rho}_f(\mathrm{Frob}_p)\in \pgl_2(\CC)$ has order $1$, $2$, $3$, or $5$. 
Hence we have $\mathcal{P}_N = R_1\sqcup R_2\sqcup R_3\sqcup R_5$, and 
Lemma \ref{lem:hecke_field_ap} shows that $K_f \subset \QQ(\zeta_{2d}, \sqrt{5})$. 

Let us  prove the converse inclusion. 
The Chebotarev density theorem implies that 
\[
d(R_1)=\frac{1}{60}, \quad d(R_2)=\frac{15}{60}, \quad  d(R_3)=\frac{20}{60}, \quad   d(R_5)=\frac{24}{60}.
\]
In particular, $R_5 \neq \emptyset$, and we have $\sqrt{5} \in K_f$ by Lemma \ref{lem:hecke_field_ap}.
Let $\ell$ be any prime factor of $d$ and denote by $e := \mathrm{ord}_\ell(d)$  the $\ell$-adic order of $d$. 
 By Lemma \ref{density}, the set $Q_{\ell^e}$ of primes has  density $1-\frac{1}{\ell}$,
 which is greater than $d(R_2)=\frac{15}{60}$.
 Thus  $Q_{\ell^e}\cap (R_1\sqcup R_3 \sqcup R_5)\neq \emptyset$. Considering $a_p$ for any prime $p\in Q_{\ell^e}\cap (R_1\sqcup R_3 \sqcup R_5)$, we have, from Lemma \ref{lem:hecke_field_ap}, 
 \[
\zeta_{2\ell^e} \in  \QQ(a_p(f)). 
 \]
 Since $\ell$ is an arbitrary prime factor of $d$, it follows that $\zeta_{2d} \in K_f$. Therefore, we conclude that $K_f = \QQ( \zeta_{2d}, \sqrt{5})$. 
\end{proof}

\begin{cor}\label{mainA5sqfree}
Suppose that $f$ is of $A_5$-type and the level $N$ of $f$ is  \textit{square-free}. 
    Then, $d\in \{2,6,10,30\}$ and  
    \[
    K_f = 
    \begin{cases}
        \QQ(\zeta_{4}, \sqrt{5})  & \textrm{ if } \, d=2, 
        \\
        \QQ(\zeta_{12} \sqrt{5}) & \textrm{ if }  \, d=6, 
        \\
        \QQ(\zeta_{20}) & \textrm{ if } \, d=10, 
        \\
        \QQ(\zeta_{60}) & \textrm{ if } \, d=30. 
    \end{cases}
    \] 
\end{cor}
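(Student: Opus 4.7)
The plan is to combine the two general results already in hand, namely Lemma \ref{Serre}(iii) (which restricts $d$ when $N$ is square-free) and Theorem \ref{mainA5general} (which computes $K_f$ from $d$), and then simplify the resulting expressions case by case.

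First I would invoke \cite[Th\'eor\`eme 4.1]{DS} exactly as in Corollary \ref{mainA4sqfree} to conclude that the conductor of $\rho_f$ is equal to the level $N$, which is square-free by assumption. Since $\rho_f$ is irreducible, odd, and two-dimensional with square-free conductor $N$ and projective image isomorphic to $A_5$, Lemma \ref{Serre}(iii) applies and forces the order $d$ of $\chi = \det \rho_f$ to lie in $\{2, 6, 10, 30\}$.

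Next, for each such $d$, I would plug into Theorem \ref{mainA5general} to get $K_f = \QQ(\zeta_{2d}, \sqrt{5})$, and then simplify:
\begin{itemize}
\item For $d = 2$, $\zeta_{2d} = \zeta_4$, so $K_f = \QQ(\zeta_4, \sqrt{5})$, and there is nothing to simplify because $\sqrt{5} \notin \QQ(\zeta_4)$.
\item For $d = 6$, $\zeta_{2d} = \zeta_{12}$, and $\sqrt{5} \notin \QQ(\zeta_{12})$ (since $\QQ(\zeta_{12})$ contains only the quadratic subfields $\QQ(i)$, $\QQ(\sqrt{3})$, $\QQ(\sqrt{-3})$), giving $K_f = \QQ(\zeta_{12}, \sqrt{5})$.
\item For $d = 10$, $\zeta_{2d} = \zeta_{20}$, and $\sqrt{5} \in \QQ(\zeta_5) \subset \QQ(\zeta_{20})$ by the standard Gauss sum identity, so $K_f = \QQ(\zeta_{20})$.
\item For $d = 30$, $\zeta_{2d} = \zeta_{60}$, and again $\sqrt{5} \in \QQ(\zeta_5) \subset \QQ(\zeta_{60})$, so $K_f = \QQ(\zeta_{60})$.
\end{itemize}

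There is no real obstacle here; everything reduces to bookkeeping once Lemma \ref{Serre}(iii) and Theorem \ref{mainA5general} are in place. The only point that requires a moment's care is confirming in each case whether $\sqrt{5}$ already lies in $\QQ(\zeta_{2d})$, which amounts to checking whether $5 \mid 2d$, and this cleanly separates the two ``new'' cases ($d = 2, 6$, where $\sqrt{5}$ genuinely enlarges the field) from the two ``absorbed'' cases ($d = 10, 30$).
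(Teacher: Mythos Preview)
Your proof is correct and follows essentially the same route as the paper: invoke \cite[Th\'eor\`eme 4.1]{DS} to identify the conductor with $N$, apply Lemma \ref{Serre}(iii) to restrict $d$, and then read off $K_f$ from Theorem \ref{mainA5general}, using $\sqrt{5}\in\QQ(\zeta_5)$ to simplify when $5\mid d$. The paper's version is just more terse, compressing your case-by-case check into a single remark.
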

\begin{proof}
    By \cite[Th\'eor\`eme 4.1]{DS}, the Galois representation $\rho_f$ has conductor $N$, and so Lemma \ref{Serre} implies that $d\in \{2,6,10,30\}$. Therefore, this corollary  follows immediately from Theorem \ref{thm:mainA5general}. Here, note that $\sqrt{5}\in \QQ(\zeta_5)$. 
\end{proof}

\subsection{$S_4$-case}\label{thm:mainS4pf}

In this subsection, we classify the Hecke fields of weight one newforms of $S_4$-type. 
The idea remains the same as in the cases corresponding to $A_4$ and $A_5$; however, the situation becomes more complicated due to the existence of order-$4$ elements in the projective image of the Galois representation $\rho_f$.

\begin{thm}\label{thm:mainS4general}
Let $k := \mathrm{ord}_2(d) \geq 1$ denote the $2$-adic valuation of the even integer $d$. If $f$ is of $S_4$-type, then the following hold: 
\begin{itemize}
    \item[(i)] If $k=1$, then $K_f=\QQ(\zeta_{d},\sqrt{-2})$ or $\QQ(\zeta_{4d})$.
    \item[(ii)] If $k=2$, then $K_f=\QQ(\zeta_{d})$ or $\QQ(\zeta_{2d})$.
    \item[(iii)] If $k\geq 3$, then $K_f=\QQ(\zeta_{2d})$.
\end{itemize}
\end{thm}
\begin{proof}
 Since $f$ is of $S_4$-type,
 the element  $\bar{\rho}_f(\mathrm{Frob}_p)\in \pgl_2(\CC)$ has order $1$, $2$, $3$, or $4$. Thus $\mathcal{P}_N = R_1\sqcup R_2 \sqcup R_3\sqcup R_4$, and  
the Chebotarev density theorem implies that the Dirichlet density of $R_m$ for each $m\in \{1,2,3,4\}$ is given by
\[
d(R_1)=\frac{1}{24}, \quad d(R_2)=\frac{9}{24}, \quad d(R_3)=\frac{8}{24}, \quad d(R_4)=\frac{6}{24}.
\]
Let $d' := d/2^k \in \ZZ$. 
Then, it follows a priori from Lemma \ref{lem:hecke_field_ap} that
\[
K_f \subset \QQ(\zeta_{2d}, \sqrt{2}) = \QQ(\zeta_{2^{k+1}}, \zeta_{d'}, \sqrt{2}).
\]

First, let us show $\zeta_{d'} \in K_f$. We may assume that $d' > 1$ since the case $d'=1$ is clear. 
Take  any odd prime divisor $\ell$ of $d'$ and denote by $e := \mathrm{ord}_\ell(d')$ the $\ell$-adic order of $d'$. By Lemma \ref{density}, the set $Q_{\ell^{e}}$ of primes has the density $1-\frac{1}{\ell}$, which is greater than $\frac{15}{24} = d(R_2\sqcup R_4)$. Hence, $Q_{\ell^{e}} \not\subset R_2\sqcup R_4$, and consequently $Q_{\ell^{e}}\cap (R_1\sqcup R_3)\neq \emptyset$. Taking $a_p(f)$ for $p\in Q_{\ell^{e}}\cap (R_1\sqcup R_3)$, we have, from Lemma \ref{lem:hecke_field_ap}, $\QQ(a_p(f)) = \QQ( \sqrt{\chi(p)})$ with $\ell^e \mid \ord(\chi(p))$. 
Since $\ell$ is an arbitrary odd prime factor of $d'$, it follows that $\zeta_{d'} \in K_f$. 

We now consider the set $Q_{2^k}$. Since $d(Q_{2^k})=\frac{1}{2}>d(R_2)=\frac{9}{24}$ by Lemma \ref{density},  we have $Q_{2^k}\cap (R_1\sqcup R_3\sqcup R_4)\neq \emptyset$. 
Lemma \ref{lem:hecke_field_ap} shows that 
\[
K_f \ni
\begin{cases}
\zeta_{2^{k+1}}  & \textrm{ if } \, Q_{2^k} \cap  (R_1\sqcup R_3) \neq \emptyset, 
\\
    \sqrt{2}\zeta_{2^{k+1}}  &  \textrm{ if } \, Q_{2^k} \cap R_4 \neq \emptyset, 
\end{cases}
\]
by considering $a_p(f)^{d'}$ for at least one prime $p$ in $Q_{2^k} \cap  (R_1\sqcup R_3)$ or $Q_{2^k} \cap R_4$.
\begin{itemize}
    \item[(i)] Suppose that $k=1$. 
     \begin{itemize}
        \item[(a)] When 
        $Q_2 \cap (R_1 \sqcup R_3) \neq \emptyset$, we have $\zeta_4 \in K_f$.
        If $Q_2\cap R_4\neq \emptyset$, then  $\sqrt{-2}\in K_f$, and hence $\sqrt{2}\in K_f$ since $\zeta_4\in K_f$.
        If $Q_2\cap R_4= \emptyset$, then primes $p\in (\mathcal{P}_N\setminus Q_2)\cap R_4$ yield $\sqrt{2\chi(p)}$ with $\chi(p)$ odd order, and hence $\sqrt{2}\in K_f$.
         In any case, we have  $K_f=\QQ(\zeta_4,  \zeta_{d'}, \sqrt{2})$, which is equal to $\QQ(\zeta_{4d})$ since $d=2d'$ and $\zeta_8=\pm \frac{1 \pm \sqrt{-1}}{\sqrt{2}}$. 
        
        \item[(b)] When $Q_2\cap (R_1\sqcup R_3) = \emptyset$,  we have $Q_2 \subset R_2 \sqcup R_4$  and so $\sqrt{-2}  \in K_f$. Hence, $\QQ(\zeta_{d'},\sqrt{-2})\subset K_f$. 
        Since $[\QQ(\zeta_4, \zeta_{d'},\sqrt{2}) \colon \QQ(\zeta_{d'},\sqrt{-2})]=2$, we have $K_f=\QQ(\zeta_{d},\sqrt{-2})$ or $\QQ(\zeta_4, \zeta_{d'},\sqrt{2}) =\QQ(\zeta_{4d})$. Here, we note $\zeta_{d} = -\zeta_{d'}$ since $d=2d'$ with $d'$ odd.  
        
     \end{itemize}
    \item[(ii)] Suppose that $k=2$. Note that $\QQ( \zeta_{2d}, \sqrt{2})=\QQ( \zeta_{2d})$ since $\sqrt{2}=\pm(\zeta_8+\zeta_8^{-1})\in \QQ(\zeta_8)\subset \QQ(\zeta_{2d})$.
    \begin{itemize}
        \item[(a)] When $Q_4 \cap (R_1 \sqcup R_3) \neq \emptyset$,  we have $\QQ(\zeta_8) \subset K_f$, and hence $\QQ(\zeta_{2d}) \subset K_f$, which must in fact be an equality.

        \item[(b)] When $Q_4 \cap (R_1 \sqcup R_3) = \emptyset$, we have $Q_4\cap R_4\neq \emptyset$, and hence  $\QQ(\zeta_4) = \QQ(\sqrt2\zeta_8) \subset K_f$. In this case, $\QQ(\zeta_d) = \QQ(\zeta_4, \zeta_{d'}) \subset K_f$, and so $K_f= \QQ(\zeta_d)$ or $\QQ(\zeta_{2d})$.
    \end{itemize} 
    \item[(iii)] Suppose that $k\geq 3$. 
    In this case, we observe that $2\zeta_{2^k} = (\sqrt{2}\,\zeta_{2^{k+1}})^2 \in K_f$ and that $\sqrt{2} \in \QQ(\zeta_8) \subset \QQ(\zeta_{2^k})$. 
It then follows from $Q_{2^k} \cap (R_1 \sqcup R_3 \sqcup R_4) \neq \emptyset$ that $\zeta_{2^{k+1}} \in K_f$, and therefore $K_f = \QQ(\zeta_{2^{k+1}}, \zeta_{d'}) = \QQ(\zeta_{2d})$. 
\end{itemize}
\end{proof}

\begin{cor}\label{mainS4sqfree}
Suppose that $f$ is of $S_4$-type and the level $N$ of $f$ is  \textit{square-free}. 
Then $d\in \{2,4,6, 12\}$ and 
    \[
    K_f = 
    \begin{cases}
        \QQ(\sqrt{-2}) \, \textrm{ or } \,  \QQ(\zeta_8)  & \textrm{ if } \, d=2, 
        \\
        \QQ(\zeta_4)\, \textrm{ or }\, \QQ(\zeta_8) & \textrm{ if }  \, d=4, 
        \\
        \QQ(\sqrt{-2}, \zeta_3)\, \textrm{ or }\, \QQ(\zeta_{24}) & \textrm{ if } \, d=6, 
        \\
        \QQ(\zeta_{12}) \, \textrm{ or } \, \QQ(\zeta_{24}) & \textrm{ if } \, d=12. 
    \end{cases}
    \] 
\end{cor}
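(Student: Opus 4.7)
The plan is to argue exactly as in the proofs of Corollaries \ref{mainA4sqfree} and \ref{mainA5sqfree}: reduce to Theorem \ref{mainS4general} by pinning down the possible values of $d$, and then simplify the Hecke-field expressions given there for each such $d$.

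First, I would invoke \cite[Th\'eor\`eme 4.1]{DS} to conclude that the conductor of $\rho_f$ equals the level $N$, which is square-free by hypothesis. Since $f$ is of $S_4$-type and $\chi = \det \rho_f$, Lemma \ref{Serre}(ii) then forces $d \in \{2, 4, 6, 12\}$. This immediately yields the list of possible values of $d$ in the statement.

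Next, for each of the four values of $d$, I would substitute into Theorem \ref{mainS4general} and simplify. When $d \in \{2,6\}$ we have $\mathrm{ord}_2(d) = 1$, so the theorem gives $K_f = \QQ(\zeta_d, \sqrt{-2})$ or $\QQ(\zeta_{4d})$; when $d \in \{4,12\}$ we have $\mathrm{ord}_2(d) = 2$, so the theorem gives $K_f = \QQ(\zeta_d)$ or $\QQ(\zeta_{2d})$. The only remaining task is the book-keeping of cyclotomic fields: $\QQ(\zeta_2) = \QQ$, $\QQ(\zeta_6) = \QQ(\zeta_3)$, and $\zeta_8 \in \QQ(\zeta_{24})$. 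These identifications turn the raw output of Theorem \ref{mainS4general} into the tabulated expressions $\QQ(\sqrt{-2})$ or $\QQ(\zeta_8)$ for $d=2$, $\QQ(\zeta_4)$ or $\QQ(\zeta_8)$ for $d=4$, $\QQ(\sqrt{-2}, \zeta_3)$ or $\QQ(\zeta_{24})$ for $d=6$, and $\QQ(\zeta_{12})$ or $\QQ(\zeta_{24})$ for $d=12$.

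There is no real obstacle here since all the work has already been done in Theorem \ref{mainS4general} and Lemma \ref{Serre}; the corollary is a straightforward specialization, parallel to the $A_4$ and $A_5$ cases. The only minor point to be careful about is the elementary verification of the cyclotomic identifications above, but these are immediate.
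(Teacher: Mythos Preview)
Your proposal is correct and follows exactly the same approach as the paper: invoke \cite[Th\'eor\`eme 4.1]{DS} and Lemma \ref{Serre} to restrict $d$ to $\{2,4,6,12\}$, and then specialize Theorem \ref{mainS4general}(i) for $d=2,6$ and (ii) for $d=4,12$. The paper's proof is even terser than yours, but the content is identical.
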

\begin{proof}
     By \cite[Th\'eor\`eme 4.1]{DS} and Lemma \ref{Serre}, we have $d \in \{2, 4, 6, 12\}$. Therefore, this corollary follows immediately from Theorem \ref{thm:mainS4general}. 
\end{proof}

\subsection{Refinement of the $S_4$-case}
\label{S4refpf}

Throughout this subsection, we assume that the newform $f$ is of $S_4$-type. 
Since in this case the projective image $\bar{\rho}_f(G_{\bQ})$ is isomorphic to $S_4$,
we obtain a quadratic character
\[
G_{\bQ} \twoheadrightarrow \bar{\rho}(G_{\bQ}) \twoheadrightarrow
\bar{\rho}(G_{\bQ})/[\bar{\rho}(G_{\bQ}), \bar{\rho}(G_{\bQ})]
\simeq \{\pm 1\}.
\]
We denote this quadratic character by $\operatorname{sgn} \circ \bar{\rho}$.


\begin{thm}\label{thm:S4_refinement}
Suppose that $f$ is of $S_4$-type. 
\begin{itemize}
    \item[(i)] If $\chi^{d/2} \neq \operatorname{sgn}\circ \bar{\rho}$ as $G_\QQ$-representations, then 
        \[
    K_f = \begin{cases}
        \QQ(\zeta_{4d}) & \textrm{ if } \, \mathrm{ord}_2(d)=1, 
        \\
        \QQ(\zeta_{2d}) & \textrm{ if } \, \mathrm{ord}_2(d)=2.  
    \end{cases}
    \]
    \item[(ii)] If $\chi^{d/2} = \operatorname{sgn}\circ \bar{\rho}$ as $G_\QQ$-representations, then
    \[
    K_f = \begin{cases}
        \QQ(\zeta_{d},\sqrt{-2}) & \textrm{ if } \, \mathrm{ord}_2(d)=1, 
        \\
        \QQ(\zeta_{d}) & \textrm{ if } \, \mathrm{ord}_2(d)=2.  
    \end{cases}
    \]
\end{itemize}
\end{thm}

The proof of Theorem \ref{thm:S4_refinement}(i) is given in \S\ref{sec:proof_of_S_4_refinement_i}, and that of Theorem \ref{thm:S4_refinement}(ii) is given in \S\ref{sec:proof_of_S_4_refinement_ii}.

\begin{cor}
\label{S4refcor}
Suppose that $f$ is of $S_4$-type and the level $N$ of $f$ is  \textit{square-free}. 
    Then, $d\in \{2,4,6,12\}$ and the following hold.   
\begin{itemize}
    \item[(i)] If $\chi^{d/2} \neq \operatorname{sgn}\circ \bar{\rho}$ as $G_\QQ$-representations, then 
        \[
    K_f = \begin{cases}
        \QQ(\zeta_{8}) & \textrm{ if } \, d=2, 4, 
        \\
        \QQ(\zeta_{24}) & \textrm{ if } \, d= 6, 12. 
    \end{cases}
    \]
    \item[(ii)] If $\chi^{d/2} = \operatorname{sgn}\circ \bar{\rho}$ as $G_\QQ$-representations, then
    \[
    K_f = \begin{cases}
\QQ(\sqrt{-2})   & \textrm{ if } \, d = 2, 
\\
\QQ(\zeta_4)  & \textrm{ if } \, d = 4, 
\\
\QQ(\zeta_3, \sqrt{-2}) & \textrm{ if } \, d=6, 
\\
\QQ(\zeta_{12})  & \textrm{ if } \, d=12.  
    \end{cases}
    \]
\end{itemize}
\end{cor}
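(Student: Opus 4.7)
The plan is to derive Corollary \ref{S4refcor} directly from Theorem \ref{S4ref}, using Lemma \ref{Serre} only to cut down the possible orders of $\chi$. First, I would invoke Deligne--Serre \cite[Th\'eor\`eme 4.1]{DS}, which ensures that $\rho_f$ has conductor exactly $N$; since $N$ is square-free and $f$ is of $S_4$-type, Lemma \ref{Serre}(ii) then forces $d \in \{2, 4, 6, 12\}$.

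Next, I would compute the $2$-adic valuations: $\mathrm{ord}_2(d) = 1$ for $d \in \{2, 6\}$ and $\mathrm{ord}_2(d) = 2$ for $d \in \{4, 12\}$. Plugging these into the two cases of Theorem \ref{S4ref} produces, before simplification, the fields $\QQ(\zeta_{4d})$ (for $d = 2, 6$) and $\QQ(\zeta_{2d})$ (for $d = 4, 12$) under hypothesis (i), and the fields $\QQ(\zeta_d, \sqrt{-2})$ (for $d = 2, 6$) and $\QQ(\zeta_d)$ (for $d = 4, 12$) under hypothesis (ii).

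Finally, I would simplify the cyclotomic expressions case by case. Under (i): $\QQ(\zeta_8)$ when $d = 2$ or $d = 4$, and $\QQ(\zeta_{24})$ when $d = 6$ or $d = 12$. Under (ii): for $d = 2$, $\QQ(\zeta_2, \sqrt{-2}) = \QQ(\sqrt{-2})$; for $d = 4$, $\QQ(\zeta_4)$ requires no simplification; for $d = 6$, $\QQ(\zeta_6, \sqrt{-2}) = \QQ(\zeta_3, \sqrt{-2})$ since $\QQ(\zeta_6) = \QQ(\zeta_3)$; and for $d = 12$, $\QQ(\zeta_{12})$ stands as is. These match the entries listed in the corollary.

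The argument is entirely mechanical once Theorem \ref{S4ref} is available; I do not foresee any genuine obstacle, only careful bookkeeping in identifying $\QQ(\zeta_{2d})$ and $\QQ(\zeta_{4d})$ with the simplest cyclotomic forms for each of the four admissible values of $d$.
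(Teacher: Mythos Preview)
Your proposal is correct and matches the paper's own proof, which simply states that the corollary is an immediate consequence of Lemma \ref{Serre} and Theorem \ref{S4ref}. Your additional invocation of \cite[Th\'eor\`eme 4.1]{DS} to justify that $\rho_f$ has conductor $N$ (so that Lemma \ref{Serre} applies) is appropriate and mirrors what the paper does explicitly in the proofs of the analogous Corollaries \ref{mainA4sqfree}, \ref{mainA5sqfree}, and \ref{mainS4sqfree}.
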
 
\begin{proof}
    This is an immediate consequence of  Lemma \ref{Serre} and  Theorem \ref{thm:S4_refinement}.
\end{proof}



\begin{rem}
    Consider here the case of prime conductor. Let $\rho: G_\QQ\to \gl_2(\CC)$ be a continuous irreducible  $2$-dimensional Galois representation with prime conductor $p$ such that $\chi=\det\rho$ is odd. Assume that $\rho$ is not dihedral.
   It was shown by Serre in \cite[Theorem 7]{Se} that
    \begin{itemize}
        \item[(a)] $p\not\equiv 1 \pmod{8}$;
        \item[(b)] if $p\equiv 5 \pmod{8}$, then $\rho$ is of type $S_4$ (i.e., $\bar{\rho}(G_\QQ) \simeq S_4$), and $\chi$ has order $4$ and conductor $p$;
        \item[(c)] if $p\equiv 3 \pmod{4}$, then $\rho$ is of type $S_4$ or $A_5$, and $\chi$ is the Legendre symbol $n\mapsto \left(\dfrac{n}{p}\right)$.
    \end{itemize}
    In addition, Serre also proves the following on \cite[page 250]{Se}: 
    The image $\rho(G_\QQ)$ consists of all elements $s\in \gl_2(\CC)$ whose image $\bar{s}\in \pgl_2(\CC)$ lies in $\bar{\rho}(G_\QQ)$ such that
    \begin{itemize}
         \item $\det(s)^2=\operatorname{sgn}(\bar{s})$ if $p\equiv 5 \pmod{8}$; 
        \item $\det(s)=\operatorname{sgn}(\bar{s})$ if $p\equiv 3 \pmod{4}$ and $\rho$ is of type $S_4$;
        \item $\det(s)=\pm 1$ if $p\equiv 3  \pmod{4}$ and $\rho$ is of type $A_5$.
    \end{itemize}
    Hence if the newform $f$ is of $S_4$-type and the level $N = p$ is a prime, then $f$ satisfies the assumption of Theorem \ref{thm:S4_refinement}(ii), and we conclude that
    \[
    K_f = \begin{cases}
        \QQ(\zeta_4) & \textrm{ if } \,\, p\equiv 5 \pmod{8}, 
        \\
        \QQ(\sqrt{-2}) & \textrm{ if } \,\, p\equiv 3\pmod{4}. 
    \end{cases}
    \]
\end{rem}

\subsubsection{Preliminaries for the proof of Theorem \ref{thm:S4_refinement}}

Before proving Theorem \ref{thm:S4_refinement}, we introduce a bit more notation and make a few observations.

\begin{dfn}
    For any finite order character  $\psi \colon G_\QQ \to \CC^\times$ of conductor dividing $N$ and $c \in \CC^\times$, we define the set $\cP_N(\psi = c)$ of primes by 
    \[
    \cP_N(\psi = c) := \{p \in \cP_N \mid \psi(p) = c\}. 
    \]
\end{dfn}

Since $f$ is of $S_4$-type, recall that  $\mathcal{P}_N = R_1\sqcup R_2 \sqcup R_3\sqcup R_4$, with Dirichlet densities 
\[
d(R_1)=\frac{1}{24}, \quad d(R_2)=\frac{9}{24},\quad \ d(R_3)=\frac{8}{24},\quad   d(R_4)=\frac{6}{24}.
\]
The set $R_2$ can be further decomposed as $R_2 = R_2^+ \sqcup R_2^-$, where 
\[
R_2^{\pm} := R_2 \cap \cP_N(\operatorname{sgn}\circ \bar{\rho}_f = \pm1).
\]
The corresponding Dirichlet densities are given by
\[
d(R_2^+)=\frac{3}{24}, \quad d(R_2^-)=\frac{6}{24}.
\]

The following two lemmas follow immediately from the definitions.

\begin{lem}\label{lem:density_sgn}
    We have 
    \begin{align*}
    \cP_N(\operatorname{sgn}\circ \bar{\rho}_f = 1) =  R_1 \sqcup R_2^+\sqcup R_3 \quad \textrm{ and } \quad 
    \cP_N(\operatorname{sgn}\circ \bar{\rho}_f = -1) =  R_2^- \sqcup R_4. 
    \end{align*}
\end{lem}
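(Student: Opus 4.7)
The plan is to read off the lemma directly from the cycle-type/sign table for $S_4$. First I would recall that the conjugacy classes of $S_4$ are determined by cycle type, and both the order and the sign of an element are constant on each class: the identity has order $1$ and sign $+1$; the $6$ transpositions have order $2$ and sign $-1$; the $3$ double transpositions have order $2$ and sign $+1$; the $8$ three-cycles have order $3$ and sign $+1$; and the $6$ four-cycles have order $4$ and sign $-1$. This is the only input needed, and it also recovers the densities $d(R_1)=1/24$, $d(R_2)=9/24$, $d(R_3)=8/24$, $d(R_4)=6/24$ stated just above the lemma, as well as $d(R_2^+)=3/24$ and $d(R_2^-)=6/24$.

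Next I would translate the table into statements about $\bar\rho(\mathrm{Frob}_p)$. By Lemma \ref{sgn} the composition $\operatorname{sgn}\circ\bar\rho$ is a well-defined character of $G_\QQ$, so the value $(\operatorname{sgn}\circ\bar\rho)(\mathrm{Frob}_p)$ depends only on the conjugacy class of $\bar\rho(\mathrm{Frob}_p)$ in $S_4$. From the table, whenever $\bar\rho(\mathrm{Frob}_p)$ has order $1$ or $3$ the sign is automatically $+1$, and whenever it has order $4$ the sign is automatically $-1$. This yields the inclusions $R_1\sqcup R_3\subset \cP_N(\operatorname{sgn}\circ\bar\rho=1)$ and $R_4\subset \cP_N(\operatorname{sgn}\circ\bar\rho=-1)$.

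For $R_2$, the sign is not determined by the order alone: it is $+1$ on the double transpositions and $-1$ on the transpositions. But this split is exactly the definition of $R_2^\pm = R_2\cap \cP_N(\operatorname{sgn}\circ\bar\rho=\pm 1)$, so $R_2^+\subset \cP_N(\operatorname{sgn}\circ\bar\rho=1)$ and $R_2^-\subset \cP_N(\operatorname{sgn}\circ\bar\rho=-1)$ by construction. Combining these inclusions with the disjoint decomposition $\cP_N = R_1\sqcup R_2^+\sqcup R_2^-\sqcup R_3\sqcup R_4$ (which uses $\cP_N=R_1\sqcup R_2\sqcup R_3\sqcup R_4$ from \S\ref{mainS4pf} together with $R_2=R_2^+\sqcup R_2^-$) forces equalities on both sides and gives the lemma.

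There is essentially no obstacle here: as the authors remark, the statement follows immediately from the definitions once one has Lemma \ref{sgn} to guarantee that $\operatorname{sgn}\circ\bar\rho$ makes sense independently of the identification $\operatorname{Im}\bar\rho\simeq S_4$. The only thing worth double-checking is that the cycle-type dichotomy for order-$2$ elements genuinely matches the definition of $R_2^\pm$, but this is tautological given the definition of $R_2^\pm$ above the lemma.
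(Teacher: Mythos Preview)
Your proposal is correct and is exactly the elaboration the paper has in mind when it says the lemma ``follows immediately from the definitions'': one reads off the sign of each order class in $S_4$ and uses the tautological splitting $R_2=R_2^+\sqcup R_2^-$. There is nothing further to compare, since the paper gives no proof beyond this remark.
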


Let $k := \mathrm{ord}_2(d)$ denote the $2$-adic order of $d$.  

\begin{lem}\label{lem:density_chi}
    $Q_{2^k} = \cP_N(\chi^{d/2} = -1)$. 
\end{lem}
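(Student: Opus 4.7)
The plan is to verify the set equality pointwise: for each $p \in \cP_N$, I will show that $\chi(p)^{d/2} = -1$ is equivalent to $2^k \mid \mathrm{ord}(\chi(p))$. Write $d = 2^k d'$ with $d'$ odd, fix a primitive $d$th root of unity $\zeta_d$, and for $p \in \cP_N$ write $\chi(p) = \zeta_d^a$ for some $a \in \ZZ/d\ZZ$, uniquely determined modulo $d$.

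First I would compute that $\chi(p)^{d/2} = \zeta_d^{a \cdot d/2} = (\zeta_d^{d/2})^a = (-1)^a$, since $\zeta_d^{d/2}$ is a primitive second root of unity. Hence $p \in \cP_N(\chi^{d/2} = -1)$ if and only if $a$ is odd.

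Next, I would use $\mathrm{ord}(\chi(p)) = d/\gcd(a,d)$. Since $d = 2^k d'$ with $d'$ odd, the $2$-adic valuation of $\mathrm{ord}(\chi(p))$ equals $k - \min(k, \mathrm{ord}_2(a))$. Thus $2^k \mid \mathrm{ord}(\chi(p))$ if and only if $\mathrm{ord}_2(a) = 0$, that is, $a$ is odd. Combining this with the previous step gives the equivalence $p \in Q_{2^k} \iff p \in \cP_N(\chi^{d/2} = -1)$, finishing the proof.

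There is no real obstacle here; the statement reduces to an elementary calculation with the order of $\chi(p)$ as a root of unity. The only minor point to be careful about is to distinguish the $2$-part of $d$ from the $2$-part of $\gcd(a,d)$, but this is immediate once one writes $d = 2^k d'$ with $d'$ odd.
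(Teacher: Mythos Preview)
Your proof is correct and is simply a fully written-out version of what the paper leaves implicit: the paper just states that the lemma ``follows immediately from the definitions'' and gives no further argument. Your parametrization $\chi(p)=\zeta_d^{\,a}$ and the two computations $\chi(p)^{d/2}=(-1)^a$ and $\mathrm{ord}_2\!\big(d/\gcd(a,d)\big)=k-\min(k,\mathrm{ord}_2(a))$ make explicit exactly why the claim is immediate.
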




\subsubsection{Proof of Theorem \ref{thm:S4_refinement}(i)}\label{sec:proof_of_S_4_refinement_i}
We  assume that $\chi^{d/2} \neq \mathrm{sgn} \circ \bar{\rho}_f$. 
From the proof of Theorem \ref{thm:mainS4general} (see in particular (i--a) and (ii--a) in the proof), it suffices to show that
$Q_{2^k} \cap (R_1 \sqcup R_3) \neq \emptyset$.

Let $M$ be the fixed field of $\ker(\chi^{d/2}) \cap \ker(\mathrm{sgn} \circ \bar{\rho}_f)$. Since $\chi^{d/2} \neq \mathrm{sgn} \circ \bar{\rho}_f$ by assumption, it follows that $M/\QQ$ is a Galois extension and 
\[
\mathrm{Gal}(M/\QQ) \stackrel{\sim}{\to}
\ZZ/2\ZZ \times \ZZ/2\ZZ; \, \mathrm{Frob}_p \mapsto (\chi^{d/2}(p), (\mathrm{sgn} \circ \bar{\rho}_f)(p)). 
\]
Hence the Chebotarev density theorem, together with Lemmas \ref{lem:density_sgn} and \ref{lem:density_chi}, implies that 
\[
d(Q_{2^k} \cap (R_1 \sqcup R_2^+\sqcup R_3)) = \frac{1}{4}. 
\]
Since $d(R_2^+) = 3/24$, we deduce that $d(Q_{2^k} \cap (R_1 \sqcup R_3)) > 0$ and in particular, $Q_{2^k} \cap (R_1 \sqcup R_3) \neq \emptyset$.

\subsubsection{Proof of Theorem \ref{thm:S4_refinement}(ii)}\label{sec:proof_of_S_4_refinement_ii}

We assume that $\chi^{d/2} = \mathrm{sgn} \circ \bar{\rho}_f$. 
From the second paragraph of the proof of Theorem \ref{thm:mainS4general}, we obtain that $\zeta_{d'} \in  K_f$ with $d' := d/2^k$.

Since $\chi^{d/2}=\operatorname{sgn}\circ \bar{\rho}$ by assumption, Lemmas \ref{lem:density_sgn} and \ref{lem:density_chi} imply that 
\begin{align*}
    \mathcal{P}_N \setminus Q_{2^k} = R_1\sqcup R_2^+\sqcup R_3 \quad  \text{ and } \quad 
    Q_{2^k} = R_2^- \sqcup R_4.
\end{align*}
Hence, for any prime $p \in \mathcal{P}_N \setminus Q_{2^k}$, Lemma \ref{lem:hecke_field_ap} yields 
\[
\QQ(a_p(f)) \subset \QQ( \zeta_{d'}, \zeta_{2^k}) \subset \QQ( \zeta_{d'},  \sqrt{2}\zeta_{2^{k+1}}). 
\]
Moreover, since $R_4 \subset Q_{2^k}$, Lemma \ref{lem:hecke_field_ap} once again gives, for any prime $p \in R_4$, 
\[
 \QQ( \sqrt{2}\zeta_{2^{k+1}}) \subset
\QQ(a_p(f))  \subset \QQ( \zeta_{d'},  \sqrt{2}\zeta_{2^{k+1}}). 
\]
Finally, since $a_p(f) = 0$ for any prime $p \in R_2$ by Lemma \ref{lem:BL_c(g)}, combining these two facts with the decomposition $\mathcal{P}_N = (\mathcal{P}_N \setminus Q_{2^k}) \sqcup R_2^- \sqcup R_4$, we deduce that
\[
K_f = \QQ(\zeta_{d'}, \sqrt{2}\zeta_{2^{k+1}}) = 
\begin{cases}
\QQ(\zeta_{d'}, \sqrt{-2}) & \text{ if } \,  k=1, 
\\
\QQ(\zeta_{d'}, \zeta_4)  & \text{ if } \,  k=2.  
\end{cases}
\]

\section{On a generator of the Hecke field $K_f$}

Let $f=\sum_{n\ge 1} a_n(f)q^n \in S_1(N,\chi)$ be a weight one exotic newform of level $N$
and nebentypus $\chi$ with $\chi(-1)=-1$. 
By the proofs of Theorems \ref{thm:mainA4general}, \ref{thm:mainA5pf},
and \ref{thm:mainS4general}, we know that
\[
K_f = \bQ\bigl(\{\, a_p(f) \mid p \gg N \,\}\bigr).
\]
In this section, we consider the set of primes 
\[
\cP_f := \{p \mid \gcd(p,N) = 1 \, \textrm{ and } \, K_f = \bQ(a_p(f))\}, 
\]
and compute its Dirichlet density. 
In what follows, let $\varphi$ denote the Euler's totient function.

\begin{thm}\label{thm:density_A_4_case}
Suppose that $f$ is of $A_4$-type. 
Then the Dirichlet density $d(\cP_f)$ of the set $\cP_f$ is given by 
\begin{align*}
    d(\cP_f) = 
    \begin{cases}
        \dfrac{3 \varphi(d)}{4d} & \textrm{if} \quad    \ker(\chi) \not\subset \ker(G_\bQ \stackrel{\overline{\rho}_f}{\twoheadrightarrow} A_4 \twoheadrightarrow A_4^{\rm ab}), 
        \\
        \\
        \dfrac{\varphi(d)}{d} &   \textrm{if} \quad \ker(\chi) \subset \ker(G_\bQ \stackrel{\overline{\rho}_f}{\twoheadrightarrow} A_4 \twoheadrightarrow A_4^{\rm ab}). 
    \end{cases}
\end{align*}
\end{thm}

\begin{proof}
Let $L_1$ denote the field corresponding to the open subgroup
$\ker\bigl(G_{\bQ} \stackrel{\overline{\rho}_f}{\twoheadrightarrow} A_4\bigr)$,
and let $L_2$ denote the field corresponding to the open subgroup
$\ker\bigl(G_{\bQ} \stackrel{\chi}{\twoheadrightarrow} \mu_d\bigr)$,
where $\mu_d \subset \bC$ denotes the group of $d$-th roots of unity.
Then we have a surjective homomorphism 
\begin{align}\label{eq:galois_image_quoutient_by_center}
G_\bQ \twoheadrightarrow \Gal(L_1L_2/\bQ) \simeq \Gal(L_1/\bQ) \times_{\Gal((L_1 \cap L_2)/\bQ)}  \Gal(L_2/\bQ).      
\end{align}
Since $L_2/\bQ$ is abelian, the extension $L_1 \cap L_2$ over $\bQ$ is also abelian.
On the other hand,
\[
\Gal(L_1/\bQ)^{\mathrm{ab}} \simeq A_4^{\mathrm{ab}} \simeq \bZ/3\bZ.
\]
It follows that
\[
[L_1 \cap L_2 : \bQ] = 1 \, \text{ or } \, 3.
\]
Suppose  that $\ker(\chi) \not\subset \ker(G_\bQ \stackrel{\overline{\rho}_f}{\twoheadrightarrow} A_4 \twoheadrightarrow A_4^{\rm ab})$. 
In this case, we have $L_1 \cap L_2 = \bQ$. Hence it follows from \eqref{eq:galois_image_quoutient_by_center} that there exists a surjective homomorphism
\[
\widetilde{\rho}_f \colon G_{\bQ} \twoheadrightarrow A_4 \times \mu_d
\]
whose composition with the first (resp.\ second) projection agrees with
$\overline{\rho}_f$ (resp. $\chi$). 
Since $K_f = \bQ(\zeta_{2d})$ by Theorem \ref{thm:mainA4general}, it follows from Lemma \ref{lem:hecke_field_ap}  that 
$p \in \cP_f$ if and only if 
$\mathrm{pr}_1(\widetilde{\rho}_f(\mathrm{Frob}_p)) \in A_4$ has order $1$ or $3$ 
and $\mathrm{pr}_2(\widetilde{\rho}_f(\mathrm{Frob}_p)) \in \mu_d$ is a generator.  The proportion of such elements is $3 \varphi(d)/4d$, 
and therefore, by the Chebotarev density theorem, we have 
\[
d(\cP_f) = \frac{3 \varphi(d)}{4d}.
\]
Next, suppose that $\ker(\chi) \subset \ker(G_\bQ \stackrel{\overline{\rho}_f}{\twoheadrightarrow} A_4 \twoheadrightarrow A_4^{\rm ab})$. 
Then we have
\[
\Gal((L_1 \cap L_2)/\bQ) \simeq \bZ/3\bZ.
\]
It follows from \eqref{eq:central_extension_Z/a_G_Gbar} that there exists a surjective homomorphism
\[
\widetilde{\rho}_f \colon G_\bQ \twoheadrightarrow A_4 \times_{\bZ/3\bZ} \mu_d
\]
whose composition with the first (resp.\ second) projection agrees with
$\overline{\rho}_f$ (resp.\ $\chi$).
In this case, for any prime $p \nmid N$, the element $\mathrm{pr}_1\bigl(\widetilde{\rho}_f(\mathrm{Frob}_p)\bigr) \in A_4$ has order $3$ whenever $\mathrm{pr}_2\bigl(\widetilde{\rho}_f(\mathrm{Frob}_p)\bigr) \in \mu_d$ is a generator. 
Since $K_f = \bQ(\zeta_{2d})$ by Theorem \ref{thm:mainA4general}, it follows from Lemma \ref{lem:hecke_field_ap} that
$p \in \cP_f$ if and only if $\mathrm{pr}_2\bigl(\widetilde{\rho}_f(\mathrm{Frob}_p)\bigr) \in \mu_d$ is a generator. 
Therefore, the proportion of such elements is $\varphi(d)/d$, and by the Chebotarev density theorem, 
\[
d(\cP_f) = \frac{\varphi(d)}{d}.
\]
\end{proof}

\begin{thm}\label{thm:density_A_5_case}
Suppose that $f$ is of $A_5$-type. 
Then the Dirichlet density $d(\cP_f)$ of the set $\cP_f$ is given by 
\begin{align*}
    d(\cP_f) = 
    \begin{cases}
        \dfrac{2 \varphi(d)}{5d} & \textrm{if} \quad  5 \nmid d, 
        \\
        \\
        \dfrac{3\varphi(d)}{4d} &  \textrm{if} \quad  5 \mid d. 
    \end{cases}
\end{align*}
\end{thm}

\begin{proof}
Since the abelianization of $A_5$ is trivial, by the same argument as in the proof of Theorem~\ref{thm:density_A_4_case},
there exists a surjective homomorphism
\[
\widetilde{\rho}_f \colon G_\bQ \twoheadrightarrow A_5 \times \mu_d
\]
whose composition with the first (resp.\ second) projection agrees with
$\overline{\rho}_f$ (resp.\ $\chi$). 
Since $K_f = \bQ(\zeta_{2d}, \sqrt{5})$ by Theorem \ref{thm:mainA4general}, 
it follows from Lemma \ref{lem:hecke_field_ap} that $p \in \cP_f$ if and only if 
$\mathrm{pr}_2(\widetilde{\rho}_f(\mathrm{Frob}_p)) \in \mu_d$ is a generator and
\[
\mathrm{pr}_1(\widetilde{\rho}_f(\mathrm{Frob}_p)) \in A_5 \, \text{ has order } 
\begin{cases}
5, & \text{if } 5 \nmid d, \\
1, 3, \text{ or } 5, & \text{if } 5 \mid d.
\end{cases}
\]
Since the proportion of such elements is $2\varphi(d)/5d$ if $5 \nmid d$, and $3\varphi(d)/4d$ if $5 \mid d$, it  follows from the Chebotarev density theorem that $d(\cP_f)$ equals this proportion. 
\end{proof}

\begin{thm}\label{thm:density_S_4_case}
Suppose that $f$ is of $S_4$-type. 
\begin{itemize}
    \item[(i)] If $\mathrm{sgn} \circ \overline{\rho}_f  \neq \chi^{d/2}$ and $\mathrm{ord}_2(d) = 1$, 
then $\cP_f = \emptyset$, i.e., the Hecke field $K_f$ is not generated, as a $\bQ$-algebra, by a single $a_p(f)$. 
    \item[(ii)] If $\mathrm{sgn} \circ \overline{\rho}_f  \neq \chi^{d/2}$ and $\mathrm{ord}_2(d) \geq 2$, 
then  the Dirichlet density $d(\cP_f)$ of the set $\cP_f$  is given by 
\begin{align*}
    d(\cP_f) = 
    \begin{cases}
        \dfrac{3 \varphi(d)}{8d} & \textrm{if} \quad  \mathrm{ord}_2(d) = 2, 
        \\
        \\
        \dfrac{5\varphi(d)}{8d} &  \textrm{if} \quad  \mathrm{ord}_2(d) \geq 3. 
    \end{cases}
\end{align*}
    \item[(iii)] If $\mathrm{sgn} \circ \overline{\rho}_f  = \chi^{d/2}$, 
then  the Dirichlet density $d(\cP_f)$ of the set $\cP_f$  is given by 
\begin{align*}
    d(\cP_f) = 
    \begin{cases}
        \dfrac{\varphi(d)}{2d} & \textrm{if} \quad  \mathrm{ord}_2(d) \neq 2, 
        \\
        \\
        \dfrac{7\varphi(d)}{8d} &  \textrm{if} \quad  \mathrm{ord}_2(d) = 2. 
    \end{cases}
    \end{align*}
\end{itemize}
\end{thm}

\begin{proof}
If $\mathrm{sgn} \circ \overline{\rho}_f  \neq \chi^{d/2}$ and $\mathrm{ord}_2(d) = 1$, then by Theorem \ref{thm:S4_refinement} we have
$K_f = \bQ(\zeta_{4d})$.  
However, Lemma \ref{lem:hecke_field_ap} shows that
$\zeta_8 \notin \bQ(a_p(f))$ since $\mathrm{ord}_2(d) = 1$.  
In particular, $\bQ(a_p(f)) \neq K_f$ for any prime $p \nmid N$, which proves claim (i).

Let us prove claim (ii). If $\mathrm{sgn} \circ \overline{\rho}_f  \neq \chi^{d/2}$ and $\mathrm{ord}_2(d) \geq 2$, then 
the same argument as in the proof of Theorem~\ref{thm:density_A_4_case},
there exists a surjective homomorphism 
\[
\widetilde{\rho}_f \colon G_\bQ \twoheadrightarrow S_4 \times \mu_d
\]
whose composition with the first (resp.\ second) projection agrees with
$\overline{\rho}_f$ (resp. $\chi$).  
Take a prime $p \nmid N$.  
Since $K_f = \bQ(\zeta_{2d})$ by  Theorems \ref{thm:mainS4general} and \ref{thm:S4_refinement}, 
it follows from Lemma \ref{lem:hecke_field_ap} that $p \in \cP_f$ if and only if 
$\mathrm{pr}_2(\widetilde{\rho}_f(\mathrm{Frob}_p)) \in \mu_d$ is a generator and
\[
\mathrm{pr}_1(\widetilde{\rho}_f(\mathrm{Frob}_p)) \in S_4 \, \text{ has order } 
\begin{cases}
1 \text{ or } 3, & \text{if } \mathrm{ord}_2(d) = 2, \\
1, 3, \text{ or } 4, & \text{if } \mathrm{ord}_2(d) \geq 3.
\end{cases}
\]
Since the proportion of such elements is $3\varphi(d)/8d$ if $\mathrm{ord}_2(d) = 2$, and $5\varphi(d)/8d$ if $\mathrm{ord}_2(d) \geq 3$, it  follows from the Chebotarev density theorem that $d(\cP_f)$ equals this proportion.

Finally, assume that $\mathrm{sgn} \circ \overline{\rho}_f = \chi^{d/2}$. 
An argument identical to that of the proof of Theorem~\ref{thm:density_A_4_case}
yields a surjective homomorphism
\[
\widetilde{\rho}_f \colon G_\bQ \twoheadrightarrow S_4 \times_{\bZ/2\bZ} \mu_d
\]
whose composition with the first (resp.\ second) projection agrees with
$\overline{\rho}_f$ (resp.\ $\chi$). 
Take a prime $p \nmid N$.  
Note that if $\mathrm{pr}_2(\widetilde{\rho}_f(\mathrm{Frob}_p)) \in \mu_d$ is a generator, then $\mathrm{sgn}(\overline{\rho}_f(\mathrm{Frob}_p)) = -1$. 
This shows that $\overline{\rho}_f(\mathrm{Frob}_p) \in S_4$ has order $2$ or $4$.  
Hence when $\mathrm{ord}_2(d) \neq 2$, Theorems \ref{thm:mainS4general} and \ref{thm:S4_refinement}, together with
Lemma \ref{lem:hecke_field_ap}, imply that
$p \in \cP_f$ if and only if
$\mathrm{pr}_2(\widetilde{\rho}_f(\mathrm{Frob}_p)) \in \bZ/d\bZ$ is a generator and
$\mathrm{pr}_1(\widetilde{\rho}_f(\mathrm{Frob}_p)) \in S_4$ has order $4$.  
Since the proportion of such elements is $\varphi(d)/2d$,
it follows from the Chebotarev density theorem that
\[
d(\cP_f) = \frac{\varphi(d)}{2d}.
\]
When $\mathrm{ord}_2(d) = 2$, Theorems \ref{thm:mainS4general} and \ref{thm:S4_refinement} show that $K_f = \bQ(\zeta_d)$. 
In this case, $p \in \cP_f$ if and only if 
\begin{itemize}
    \item $\mathrm{pr}_2(\widetilde{\rho}_f(\mathrm{Frob}_p)) \in \bZ/d\bZ$ is a generator and
$\mathrm{pr}_1(\widetilde{\rho}_f(\mathrm{Frob}_p)) \in S_4$ has order $4$, or 
\item $\mathrm{pr}_2(\widetilde{\rho}_f(\mathrm{Frob}_p)) \in \bZ/d\bZ$ has order $d/2$ and
$\mathrm{pr}_1(\widetilde{\rho}_f(\mathrm{Frob}_p)) \in S_4$ has order $1$ or $3$.  
\end{itemize}
Since the proportion of such elements is $7\varphi(d)/8d$,
it follows from the Chebotarev density theorem that $d(\cP_f) = 7\varphi(d)/8d$. 
\end{proof}

\begin{rem}\label{rem:density_Q(chi)(a_p(f))}
Since the newform $f$ has nebentypus $\chi$, one should perhaps consider the field
$\bQ(\chi)(a_p(f))$ instead of $\bQ(a_p(f))$, where $\bQ(\chi) := \bQ(\mathrm{Im}(\chi)) = \bQ(\zeta_d)$. 
In other words, we consider the Dirichlet density of the set of primes
\[
\cP_f^{\chi}
:= \{ p \mid \gcd(p,N)=1 \, \text{ and } \, K_f = \bQ(\chi)(a_p(f)) \}.
\]
By the same argument as in Theorems \ref{thm:density_A_4_case}, \ref{thm:density_A_5_case}, and \ref{thm:density_S_4_case}, the following result holds.
\begin{itemize}
    \item[(i)] If $f$ is $A_4$-type, then  
    \begin{align*}
    d(\cP_f^{\chi}) =  \dfrac{3}{8}
    \end{align*}
        \item[(ii)] If $f$ is $A_5$-type, then  
        \[
            d(\cP_f^{\chi}) =     \begin{cases}
        \dfrac{1}{5} & \textrm{if} \quad  5 \nmid d, 
        \\
        \\
        \dfrac{3}{8} &  \textrm{if} \quad  5 \mid d. 
    \end{cases}
        \]
      \item[(iii)] If $f$ is $S_4$-type, then  
        \[
            d(\cP_f^{\chi}) =     \begin{cases}
        0 & \textrm{if} \quad \mathrm{sgn} \circ \overline{\rho}_f  \neq \chi^{d/2} \textrm{ and } \mathrm{ord}_2(d) = 1, 
        \\
        \\
        \dfrac{3}{16} &  \textrm{if} \quad \mathrm{sgn} \circ \overline{\rho}_f  \neq \chi^{d/2} \textrm{ and } \mathrm{ord}_2(d) = 2, 
                \\
        \\
        \dfrac{5}{16} &  \textrm{if} \quad \mathrm{sgn} \circ \overline{\rho}_f  \neq \chi^{d/2} \textrm{ and } \mathrm{ord}_2(d) \geq 3, 
                \\
        \\
         \dfrac{1}{4} &  \textrm{if} \quad \mathrm{sgn} \circ \overline{\rho}_f  = \chi^{d/2} \textrm{ and } \mathrm{ord}_2(d) \neq 2,  
                \\
        \\
        1 &  \textrm{if} \quad \mathrm{sgn} \circ \overline{\rho}_f  = \chi^{d/2} \textrm{ and } \mathrm{ord}_2(d) = 2.  
    \end{cases}
        \]
\end{itemize}
\end{rem}

\section{Strongly minimal newforms}\label{sec:Strongly minimal newforms}

For any Dirichlet character $\psi$, there exists a unique newform
$g := f \otimes \psi$, called the twist of $f$ by $\psi$, characterized by the relation $a_p(g) = \psi(p) a_p(f)$ 
for almost all primes $p$. 
 In this case we say that $f$ and $g$ are twist equivalent.

\begin{dfn}
A newform is said to be twist-minimal if its level attains the minimal value in its twist class.
\end{dfn}

As noted in Remark \ref{rem_twists}, twisting by Dirichlet characters allows one to vary the order of the nebentypus of a newform almost arbitrarily. It is therefore natural to restrict our attention to twist-minimal newforms. 
However, the orders of the nebentypus of twist-minimal newforms that are twist-equivalent need not coincide.
Accordingly, among twist-minimal newforms in a given twist class, we shall focus on one whose nebentypus has minimal order.

\begin{dfn}\label{def:strongly_minimal}
We say that a newform $f$ with nebentypus $\chi$ is strongly minimal if it satisfies the following conditions:
\begin{itemize}
\item[(i)] $f$ is twist-minimal;
\item[(ii)] for every twist-minimal newform $g$ that is twist-equivalent to $f$, with nebentypus $\chi_g$,  one has
$|\chi(I_p)| \leq |\chi_g(I_p)|$ for every prime $p$.
\end{itemize}
\end{dfn}

Note that each twist class contains a strongly minimal newform. 
By Lemma \ref{lem:order_character}, within any twist class, the nebentypus of a strongly minimal newform has the minimal order among the twist-minimal newforms.

Recall that $d$ denotes the order of  the nebentypus $\chi$. 
The following theorems are the main results of this section.

\begin{thm}\label{thm:minimal_order}
    Suppose that the weight one newform $f$ is strongly minimal.  
    \begin{itemize}
        \item[(i)] If $f$ is of $A_4$-type, then $d$ takes the form $3 \cdot 2^k$ with $k \geq 1$. 
        \item[(ii)] If $f$ is of $A_5$-type, then  $d$  takes the form $2^{k} \cdot 3^{\delta_3} \cdot 5^{\delta_5}$ with $k \geq 1$ and $\delta_3, \delta_5 \in \{0,1\}$. 
        \item[(iii)] If $f$ is of $S_4$-type, then $d$ takes the form $2^{k}$ or $3 \cdot 2^k$ with $k \geq 1$. 
    \end{itemize}
\end{thm}

The proof of this theorem is given in \S\ref{sec:proof_thm_minimal}.

 \subsection{Local lifting}

In this subsection, we fix a prime $p$ and let
\[
\bar{\rho}_p \colon G_{\QQ_p} \to \pgl_2(\CC)
\]
denote a projective Galois representation.

\begin{dfn}
We say that a lifting $\rho \colon G_{\QQ_p} \to \gl_2(\CC)$ of $\bar{\rho}_p$  is minimal if the conductor of $\rho$ is minimal within the set of liftings of $\bar{\rho}$ to $\gl_2(\CC)$. 
\end{dfn}

If we have  two lifts of $\bar{\rho}_p$ to $\gl_2(\CC)$, then they differ only by a twist by a character on $G_{\QQ_p}$. This fact will be used frequently below without further mention.

We now consider an explicit minimal lift  of $\bar{\rho}_p$ to $\mathrm{GL}_2(\mathbb{C})$.
First, recall the following well-known fact.

\begin{prop}\label{prop:conj}
Two finite subgroups of $\pgl_2(\CC)$ which are isomorphic are conjugate. 
\end{prop}
\begin{proof}
    See, for example, \cite[Proposition 4.1]{Beauville10}. 
\end{proof}

\begin{lem}\label{lem:lift_cyclic}
If $\bar{\rho}_p(G_{\QQ_p})$ is cyclic, then there is a minimal lifting $\rho_p \colon G_{\QQ_p} \to \gl_2(\CC)$ of $\bar{\rho}_p$ such that $\rho_p \simeq \det \rho_p \oplus \mathbbm{1}$. 
\end{lem}
\begin{proof}
 Let $\bar{\rho}_p(\sigma) \in \bar{\rho}_p(G_{\QQ_p})$ be a generator. 
 By Proposition \ref{prop:conj}, the element $\bar{\rho}_p(\sigma)$ is conjugate to $\begin{pmatrix}\zeta & 0 \\ 0 & 1\end{pmatrix} \bmod{ \mathbb{C}^\times}$, where $\zeta$ is a root of unity. 
Hence, one can choose a lift $g \in \mathrm{GL}_2(\mathbb{C})$ of $\bar{\rho}_p(\sigma)$ which is conjugate to $\begin{pmatrix}\zeta & 0\\ 0 & 1\end{pmatrix}$.
Then  $\mathrm{ord}(g) = \mathrm{ord}(\bar{\rho}_p(\sigma))$, and 
 the homomorphism 
\[
\rho_p \colon G_{\QQ_p} \to G_{\QQ_p}/\ker(\bar{\rho}_p) \stackrel{\sim}{\to} \langle g \rangle \subset \gl_2(\CC); \sigma \mapsto g
\]
is a desired lift of $\bar{\rho}_p$. The minimality of $\rho_p$ follows from the fact that $\rho_p \simeq \det \rho_p \oplus \mathbbm{1}$. 
\end{proof}

\begin{lem}\label{lem:lift_dihedral_odd}
If $\bar{\rho}_p(G_{\QQ_p}) \simeq D_{2n}$ for some odd positive integer $n$,  
    then there is a minimal lifting $\rho_p \colon G_{\QQ_p} \to \gl_2(\CC)$ of $\bar{\rho}_p$ such that $\det \rho_p$ is a  quadratic character. 
\end{lem}

\begin{proof}
Since $\bar{\rho}_p(G_{\mathbb{Q}_p}) \simeq D_{2n}$, we can define a Galois representation $\tilde{\rho}_p$ by 
\[
\tilde{\rho}_p \colon G_{\mathbb{Q}_p} \stackrel{\bar{\rho}_p}{\to} \bar{\rho}_p(G_{\mathbb{Q}_p}) \simeq
\left\{
\begin{pmatrix} \zeta_n^a & 0 \\ 0 & \zeta_n^{-a} \end{pmatrix},
\begin{pmatrix} 0 & \zeta_n^{-a} \\ \zeta_n^a & 0 \end{pmatrix}
\,\middle|\, 0 \le a < n
\right\} \subset \mathrm{GL}_2(\mathbb{C}).
\]
Since $n$ is odd, the dihedral group $D_{2n}$ has trivial center. It follows that the projective image of $\tilde{\rho}_p$ is isomorphic to $D_{2n}$.
By Proposition \ref{prop:conj}, this projective image is conjugate to $\bar{\rho}_p(G_{\mathbb{Q}_p})$, so, if needed, we may conjugate $\tilde{\rho}_p$ to obtain the desired lift $\rho_p$. 
By construction, the lift $\rho_p$ is minimal. 
\end{proof}

\begin{lem}\label{lem:lift_dihedral_even}
Suppose that $p$ is an odd prime. 
\begin{itemize}
    \item[(i)]  If $\bar{\rho}_p(G_{\QQ_p}) \simeq D_{8}$,  
     then there is a minimal lifting $\rho_p \colon G_{\QQ_p} \to \gl_2(\CC)$  such that $\det \rho_p$ is a (ramified) quadratic character.  
    \item[(ii)] Let $k := \mathrm{ord}_2(p-1)$. If $\bar{\rho}_p(G_{\QQ_p}) \simeq D_{4}$,  
    then there is a minimal lifting  $\rho_p \colon G_{\QQ_p} \to \gl_2(\CC)$ such that  $\det \rho_p$ is a totally ramified character of order $2^k$. 
    Furthermore, for any lifting $\rho_p'$ of $\bar{\rho}_p$, the order of $\det \rho_p'$ is divisible by $2^k$. 
    \end{itemize}
\end{lem}
\begin{proof}
Let us show claim (i).
Since $p \geq 3$, the image $\bar{\rho}_p(I_p)$ is cyclic. Moreover, the quotient $\bar{\rho}_p(G_{\QQ_p})/\bar{\rho}_p(I_p)$ is also cyclic. 
It then follows from $\bar{\rho}_p(G_{\QQ_p}) \simeq D_{8}$ that $|\bar{\rho}_p(I_p)| = 4$. 
Let $K$ be the unramified quadratic extension of $\mathbb{Q}_p$.
Then there exists a totally ramified character $\mu \colon G_K \to \mathbb{C}^{\times}$ of order $4$ such that
\[
\bar{\rho}_p \simeq \mathrm{Ind}_{K/\mathbb{Q}_p}(\mu).
\]
Since $p^2 \equiv 1 \pmod{8}$, we can choose a totally ramified character $\tilde{\mu} \colon G_K \to \mathbb{C}^{\times}$ with $\tilde{\mu}^2 = \mu$ and define
\[
\tilde{\rho}_p := \mathrm{Ind}_{K/\mathbb{Q}_p}(\tilde{\mu}) \colon G_{\mathbb{Q}_p} \to \mathrm{GL}_2(\mathbb{C}).
\]
Because $\mu^{\sigma} = \mu^{-1}$, where $\sigma$ is the generator of $\mathrm{Gal}(K/\mathbb{Q}_p)$, we have $\tilde{\mu}^{\sigma} = \tilde{\mu}^b$ for $b \in \{-1,3\}$ and the image  $\tilde{\rho}_p(G_{\bQ_p})$ is (conjugate to) the group 
\[
\left\{
\begin{pmatrix} \zeta_8^{a} & 0 \\ 0 & \zeta_8^{ab} \end{pmatrix},
\begin{pmatrix} 0 & \zeta_8^{ab} \\ \zeta_8^{a} & 0 \end{pmatrix}
\,\middle|\, 0 \leq a < 8
\right\} \subset \mathrm{GL}_2(\mathbb{C}).
\]
In particular, the projective image of $\tilde{\rho}_p$ is isomorphic to $D_8$.
Hence, by Proposition \ref{prop:conj}, we may conjugate $\tilde{\rho}_p$ if necessary to obtain the desired lift $\rho_p$. 
Since $\tilde{\mu}$ is a tamely ramified character that cannot be extended to $G_{\mathbb{Q}_{p}}$, the lifting $\rho_p$ is minimal. 

Next, we establish claim (ii).
We continue to let $K/\mathbb{Q}_p$ be the unramified quadratic extension and $\mathrm{Gal}(K/\mathbb{Q}_p) = \langle \sigma \rangle$. 
Since $p^2 \equiv 1 \pmod{2^{k+1}}$, one can take a totally ramified character $\mu \colon G_K \to \CC^{\times}$ of order $2^{k+1}$. 
We then define
\[
\tilde{\rho}_p := \mathrm{Ind}_{K/\mathbb{Q}_p}(\mu) \colon G_{\mathbb{Q}_p} \to \mathrm{GL}_2(\mathbb{C}).
\]
Since $p \equiv 1 \pmod{2^{k}}$ by the definition of the integer $k$, we see that $\mu^2$ is the restriction of a character of $G_{\QQ_{p}}$, and hence $(\mu^{\sigma})^2 = \mu^2$. Therefore, we have 
\[
\mu^{\sigma} = - \mu
\]
and the image  $\tilde{\rho}_p(G_{\bQ_p})$ is (conjugate to) the set 
\[
\left\{
\begin{pmatrix} \zeta_{2^{k+1}}^{a} & 0 \\ 0 & -\zeta_{2^{k+1}}^{a} \end{pmatrix},
\begin{pmatrix} 0 & \zeta_{2^{k+1}}^{a} \\ -\zeta_{2^{k+1}}^{a} & 0 \end{pmatrix}
\,\middle|\, 0 \leq b < 2^{k+1}
\right\} \subset \mathrm{GL}_2(\mathbb{C}).
\]
In particular, the projective image of $\tilde{\rho}_p$ is isomorphic to $D_4$ and by Proposition \ref{prop:conj}, we may conjugate $\tilde{\rho}_p$ if necessary to obtain the desired lift. 
For the same reason as in claim (i), the lifting $\rho_p$ is minimal. 
Since any lifting $\rho_p'$ of $\bar{\rho}_p$ can be written as $\rho_p \otimes \psi$, 
 the remaining assertion follows from the facts that  $\mathrm{ord}_2(p-1) = k$ and that  $\det \rho_p$ is a totally ramified character on $G_{\bQ_p}$ of order $2^k$. 
\end{proof}

\subsubsection{Global lifting}

First, we recall the result  of Tate on a lifting of $\bar{\rho}$ to $\gl_2(\CC)$, as presented in \cite{Se}.

\begin{thm}[{\cite[Theorem 5]{Se}}]\label{thm:tate_lift}
For each prime $p$, let  $\rho'_p \colon G_{\QQ_p} \to \gl_2(\CC)$ be a lifting  of $\bar{\rho}|_{G_{\QQ_p}}$. 
Assume that $\rho'_p$ is unramified for all but finitely many $p$. 
Then there exists a unique lifting $\rho \colon G_{\QQ} \to \gl_2(\CC)$ of $\bar{\rho}$ such that 
$\rho|_{I_p} = \rho'_p|_{I_p}$ for all prime $p$. 
\end{thm}

\begin{lem}\label{lem:cyclic-dihedral}
For any odd prime $p$, the group $\bar{\rho}(G_{\QQ_p})$ is either cyclic or dihedral. 
\end{lem}
\begin{proof}
If $\bar{\rho}$ is at most tamely ramified at $p$, then $\bar{\rho}(G_{\mathbb{Q}_p})$ is metacyclic, and the classification of finite subgroups of $\mathrm{PGL}_2(\mathbb{C})$ implies that it is  cyclic or dihedral.
Next, suppose that $\bar{\rho}$ is wildly ramified at $p$.
In this case, the non-trivial $p$-Sylow subgroup of $\bar{\rho}(G_{\mathbb{Q}_p})$ is normal; hence, again by the classification of finite subgroups of $\mathrm{PGL}_2(\mathbb{C})$, either $p=2$, or $\bar{\rho}(G_{\mathbb{Q}_p})$ is cyclic or dihedral. 
\end{proof}

Recall that $\rho_f \colon G_\QQ \to \gl_2(\CC)$ denotes the Galois representation associated with the weight one newform $f$.

\begin{prop}\label{prop:minimal_order}
Let $S$ (resp. $T$) denote the set of odd primes $p$ for which $\bar{\rho}_f(G_{\mathbb{Q}_p})$ is cyclic (resp. isomorphic to $D_4$). 
Put $t := \max_{p \in T}\mathrm{ord}_2(p-1)$. 
If $f$ is strongly minimal, then we have 
\[
\mathrm{ord}(\chi) = \lcm\{2^t, \, |\chi(I_2)|, \, |\bar{\rho}_f(I_p)| \ (p \in S)\}. 
\]
\end{prop}
\begin{proof}
Note that if $p \not \in S \cup T \cup \{2\}$, then $\bar{\rho}_f(G_{\mathbb{Q}_p})$ is non-abelian dihedral by Lemma \ref{lem:cyclic-dihedral}. 
Moreover, if $\bar{\rho}_f(G_{\mathbb{Q}_p})$ is non-abelian, then $\bar{\rho}_f$ is ramified at $p$, and hence only finitely many primes lie outside $S \cup T$. 

For each odd prime $p \not\in S \cup T$, we denote by $\rho_p \colon G_{\QQ_p} \to \gl_2(\CC)$ the minimal lifting of $\bar{\rho}_f|_{G_{\QQ_p}}$ constructed in Lemmas \ref{lem:lift_dihedral_odd} and \ref{lem:lift_dihedral_even}. 
Then there exists a  character $\psi_p$ on $G_{\bQ_p}$ such that $\rho_p \simeq \rho_f|_{G_{\QQ_p}} \otimes \psi_p$. 
Let $\psi$ be the Dirichlet character satisfying $\psi|_{I_p} = \psi_p|_{I_p}$ for any prime $p \not\in S \cup T \cup \{2\}$ and $\psi|_{I_p} = \mathbbm{1}$ for any prime $p \in S \cup T \cup \{2\}$. 
Then by the definition of $\rho_p$, the lifting $(\rho_f \otimes \psi)|_{G_{\QQ_p}}$ is minimal for any odd prime $p$. 
Moreover, since $f$ is strongly minimal, Lemma \ref{lem:order_character} implies that 
\[
\mathrm{ord}(\chi) = \mathrm{ord}(\chi\psi^2).
\]
When $p \not\in S \cup T \cup \{2\}$, the construction of $\rho_p$ shows that $|(\chi\psi^{2})(I_p)|$ is $1$ or $2$. 
When $p \in S$, the strong minimality of $\rho_f$ together with Lemma \ref{lem:lift_cyclic} implies that 
$(\rho_f \otimes \psi)|_{I_p} \simeq \chi\psi^2|_{I_p} \oplus \mathbbm{1}$. 
In particular, $|\chi(I_p)| = |\chi\psi^2(I_p)| = |\bar{\rho}_f(I_{p})|$. 
Thus it follows from Lemmas \ref{lem:order_character} and \ref{lem:lift_dihedral_even} that $\mathrm{ord}(\chi) = \lcm\{2^t, \, |\chi(I_2)|, \, |\bar{\rho}_f(I_p)| \ (p \in S)\}$.  
\end{proof}


\subsection{Proof of Theorem \ref{thm:minimal_order}}\label{sec:proof_thm_minimal}

Note that $\chi(-1) = -1$ implies that the order of $\chi$ is even.

\subsubsection{Proof of Theorem \ref{thm:minimal_order}(i): $A_4$-type case}
Suppose that $f$ is strongly minimal and of $A_4$-type. 
 By Proposition \ref{prop:minimal_order}, to prove Theorem \ref{thm:minimal_order}(i), it suffices to show that there exists a prime $p$ such that $\bar{\rho}_f(G_{\QQ_p})$ is cyclic and $|\bar{\rho}_f(I_p)| = 3$.  
 
 Since the image projective $\bar{\rho}_f(G_\bQ)$ is isomorphic to $A_4$, the fixed field of $\ker( \bar\rho_f)$ contains a cyclic cubic extension $K/\QQ$ corresponding to the Klein four-group in $A_4$. 
 Let $q$ be a prime ramified in $K$. 
 Then $\bar{\rho}_f(I_q)$ surjects onto $\mathrm{Gal}(K/\QQ)$. 
Hence $q$ is an odd prime and $\bar{\rho}_f(G_{\QQ_p})$ is cyclic or dihedral by Lemma \ref{lem:cyclic-dihedral}. 
However, since the only metacyclic subgroups of $A_4$ that admit a surjection onto $\ZZ/3\ZZ$ are cyclic of order $3$, the group $\bar{\rho}_f(G_{\QQ_p})$ is cyclic of order $3$.

\subsubsection{Proof of Theorem \ref{thm:minimal_order}(ii): $A_5$-type case}  
Since any cyclic subgroup of $A_5$ has order $1$, $2$, $3$, or $5$, Theorem  \ref{thm:minimal_order}(ii) follows from Proposition \ref{prop:minimal_order}.

\subsubsection{Proof of Theorem \ref{thm:minimal_order}(iii): $S_4$-type case} 

Since any cyclic subgroup of $S_4$ has order $1$, $2$, $3$, or $4$, Theorem \ref{thm:minimal_order}(iii) follows from Proposition \ref{prop:minimal_order}.

\subsection{Strongly minimal newforms with a prescribed order of the nebentypus}

In Theorem \ref{thm:minimal_order}, we established the necessary conditions for the order of the nebentypus of a strongly minimal newform. 
We now turn to the question of whether, for each type, there exists a strongly minimal newform whose nebentypus has the specified order. 
Here we address this problem in the cases of the $A_4$-type and the $S_4$-type.

\subsubsection{$A_4$-type case}

Let $K := \QQ(\zeta_9 + \zeta_9^{-1})$ denote the maximal totally real subfield of $\QQ(\zeta_9)$. 
Note that $K/\QQ$ is an abelian extension with Galois group 
$\mathrm{Gal}(K/\QQ) \simeq \ZZ/3\ZZ$ and that the class number of $K$ is $1$. Hence, every ideal in $K$ is principal. 

For any local or global field $L$, let $\cO_L$ denote the ring of integers in $L$. 

\begin{lem}\label{lem:pm_square_existence}
    For any element $r \in \cO_K \setminus 2\cO_K$, 
    at least one of the congruences $X^2 \equiv \pm r \pmod{4\cO_K}$  admits a solution in $\cO_K$. 
\end{lem}
\begin{proof}
The prime $2$ does not split in $K$.
Thus, $K_2 := K \otimes_{\bQ} \QQ_2$ is the unramified extension of $\QQ_2$ of degree $3$.
It follows that
\[
(\cO_{K_2}/4\cO_{K_2})^\times \otimes_{\bZ} \bF_2 \simeq (\mathbb{F}_8^\times \times (1+2\cO_{K_2})/(1+4\cO_{K_2})) \otimes_{\bZ} \bF_2 \simeq (1+2\cO_{K_2})/(1+4\cO_{K_2}).
\]
Since the group $(1+2\cO{K_2})/(1+4\cO_{K_2})$ is generated by  $-1$, we deduce that at least one of the congruences $X^2 \equiv \pm r \pmod{4\cO_K}$ has a solution. 
\end{proof}

Let $\sigma$ denote a generator of the Galois group $\gal(K/\QQ)$. 
By Dirichlet's unit theorem (see \cite[Proposition 8.7.2]{NSW08}), there is  an isomorphism of $\QQ[\gal(K/\QQ)]$-modules 
\[
(\cO_K^\times \otimes_{\bZ} \QQ) \oplus \QQ \simeq \QQ[\gal(K/\QQ)]. 
\]
Hence, one can choose units $\epsilon \in \cO_K^\times$  such that 
\[
 \epsilon\epsilon^\sigma\epsilon^{\sigma^2} = 1, \quad  \cO_K^\times = \{\pm  \epsilon^{a_1} (\epsilon^{\sigma})^{a_2} \mid a_1, a_2 \in \ZZ \}. 
\]
Note that $K \simeq \QQ[x]/(x^3-3x-1)$. 
The cubic equation $x^3 - 3x - 1 = 0$ has two negative roots and one positive root, and these roots are units of norm $1$.
Therefore, the unit $\epsilon$ is neither totally positive nor totally negative.

\begin{lem}\label{lem:A4-extension}
Let $k$ be a positive ingteger. 
There exist infinitely many  primes $q>3$ with  $\mathrm{ord}_2(q-1) = k$ for which an odd projective Galois representation $\bar{\rho} \colon G_\QQ \to \pgl_2(\CC)$ (i.e., $\overline{\rho}(c) \neq 1$) exists satisfying the following properties:
    \begin{itemize}
    \item[(a)] $\bar{\rho}(G_\QQ) \simeq A_4$, 
    \item[(b)] $\bar{\rho}$ is unramified outside $3$, $q$, and $\infty$,  
   \item[(c)] $\bar{\rho}(G_{\QQ_3}) = \bar{\rho}(I_{\QQ_3}) \simeq \ZZ/3\ZZ$,  
       \item[(d)] $\bar{\rho}(G_{\QQ_q}) \simeq D_4$.  
    \end{itemize}
\end{lem}
\begin{proof}
Consider the field 
\[
M := K(\sqrt{\epsilon}, \sqrt{\epsilon^\sigma}, \zeta_{2^{k+1}}). 
\]
Since $\gal(K(\sqrt{\epsilon}, \sqrt{\epsilon^\sigma})/\QQ) \simeq A_4$ by the choice of the unit $\epsilon$, 
we have $K(\sqrt{\epsilon}, \sqrt{\epsilon^\sigma}) \cap K(\zeta_{2^{k+1}}) = K$, so that
\begin{align*}
    \gal(M/K) &\simeq \gal(K(\sqrt{\epsilon})/K) \times \gal(K(\sqrt{\epsilon^\sigma})/K) \times \gal(\QQ(\zeta_{2^{k+1}})/\QQ)
    \\
    &\simeq \ZZ/2\ZZ \times \ZZ/2\ZZ \times (\ZZ/2^{k+1}\ZZ)^\times. 
\end{align*}
Therefore, by the Chebotarev density theorem, there are infinitely many  primes $q>3$ such that the Frobenius conjugacy class at $q$ is equal to the set 
\[
\{(0,1,1+2^{k}), (1,0,1+2^{k}), (1,1,1+2^{k})\} \subset \gal(M/K) \subset \gal(M/\QQ)
\]
under the above isomorphism. 
By construction, $q \equiv 1 + 2^k \pmod{2^{k+1}}$, which implies that  $\mathrm{ord}_2(q-1) = k$.

The prime $q$ splits completely in $K$, so there exists a prime element $Q \in \cO_K$ satisfying $\pm q = Q Q^\sigma Q^{\sigma^2}$. 
Due to the choice of  $q$, the Frobenius automorphism $\mathrm{Frob}_Q \in \gal(K(\sqrt{\epsilon}, \sqrt{\epsilon^\sigma})/K)$ is non-trivial.
It follows that among the following congruences, exactly two have solutions (in $\cO_K$), while the other two do not: 
\begin{align*}
    &X^2 \equiv 1 \pmod{Q\cO_K},  &&X^2 \equiv \epsilon \pmod{Q\cO_K}, 
    \\
&X^2 \equiv \epsilon^{\sigma} \pmod{Q\cO_K},  &&X^2 \equiv \epsilon^{\sigma^2} \pmod{Q\cO_K}.   
\end{align*}
Since $\epsilon$ is  neither totally positive nor totally negative and 
\[
(\epsilon^{\sigma^i} Q)^{\sigma}(\epsilon^{\sigma^i} Q)^{\sigma^2} = (\epsilon^{\sigma^{i}})^{-1} Q^\sigma Q^{\sigma^2}, 
\]
There exists a unit $\epsilon' \in \{1, \epsilon, \epsilon^{\sigma}, \epsilon^{\sigma^2}\}$ such that $(\epsilon' Q)^\sigma (\epsilon' Q)^{\sigma^2}$ is neither totally positive nor totally negative and the congruence
\[
X^2 \equiv (\epsilon' Q)^\sigma (\epsilon' Q)^{\sigma^2} \pmod{Q\cO_K}
\]
has no solution. 
By replacing $Q$ with $\epsilon' Q$, we may assume that $\epsilon' = 1$.

Moreover, if necessary, replacing $Q$ with $-Q$ and applying Lemma \ref{lem:pm_square_existence} allows us to assume that the congruence
$X^2 \equiv Q \pmod{4\cO_K}$ has a solution. Note that $Q^\sigma Q^{\sigma^2}$ remains unchanged.

    Under the above preparations, set $L := K(\sqrt{QQ^{\sigma}}, \sqrt{Q^\sigma Q^{\sigma^2}})$. 
    Since
    \[
    \sqrt{QQ^{\sigma}}\sqrt{Q^{\sigma}Q^{\sigma^2}}\sqrt{Q^{\sigma^2}Q} = q, 
    \]
    the extension $L/\QQ$ is Galois, and we have an isomorphism $\mathrm{Gal}(L/\QQ) \simeq A_4$. 
Hence we obtain the projective Galois representation 
\[
\bar{\rho} \colon G_\QQ \to \gal(L/\QQ) \simeq A_4 \hookrightarrow \pgl_2(\CC). 
\]
Since $Q^\sigma Q^{\sigma^2}$ is neither totally positive nor totally negative, 
the field $L$ is totally imaginary, and hence $\bar{\rho}$ is odd.

Since the congruence $X^2 \equiv Q \pmod{4\cO_K}$ has a solution, the prime $2\cO_K$ is unramified in $L$. 
Hence condition (b) follows from the construction of the field $L$. 
Among the metacyclic subgroups of $A_4$, those whose order is a multiple of $3$ are isomorphic to $\mathbb{Z}/3\mathbb{Z}$.
From this observation, condition (c) follows since $3$ is ramified in $K$. 
Since the prime $q$ splits completely in $K$, 
the subgroup $\bar{\rho}(G_{\QQ_q})$ is contained in $\gal(L/K)$. 
The extension $K(\sqrt{Q Q^{\sigma}})/K$ is ramified at $Q$. The prime element $Q$ does not split in $K(\sqrt{Q^{\sigma} Q^{\sigma^2}})/K$ since the congruence $X^2 \equiv Q^\sigma Q^{\sigma^2} \pmod{Q\cO_K}$ has no solution. 
These facts imply that $\bar{\rho}(G_{\QQ_q}) = \gal(L/K) \simeq D_4$. 
\end{proof}

\begin{prop}\label{prop:exicentece_A4_minimal}
    For any positive integer $k$, there exists a strongly minimal newform of $A_4$-type with nebentypus of order $3 \times 2^k$. 
\end{prop}
\begin{proof}
By Lemma \ref{lem:A4-extension}, there exists a prime $q>3$ with $\mathrm{ord}_2(q-1) = k$ and  an odd projective Galois representat $\bar{\rho} \colon G_\QQ \to \pgl_2(\CC)$  satisfying the following properties:
    \begin{itemize}
    \item[(a)] $\bar{\rho}(G_\QQ) \simeq A_4$, 
    \item[(b)] $\bar{\rho}$ is unramified outside $3$, $q$, and $\infty$. 
   \item[(c)] $\bar{\rho}(G_{\QQ_3}) = \bar{\rho}(I_{\QQ_3}) \simeq \ZZ/3\ZZ$. 
       \item[(d)] $\bar{\rho}(G_{\QQ_q}) \simeq D_4$, 
    \end{itemize}
Since $\bar{\rho}(G_\QQ) \simeq A_4$, it follows from a theorem of Langlands \cite{Langlands80} that  one can choose a strongly minimal newform $f$ satisfying $\bar{\rho}_f \simeq \bar{\rho}$. 
From Lemma \ref{lem:order_character} and condition (b), we obtain 
\[
\mathrm{ord}(\chi) = \lcm\{|\chi(I_3)|, |\chi(I_q)|\}. 
\]
 Lemma \ref{lem:lift_cyclic}, combined with condition (c), implies $|\chi(I_3)| = |\bar{\rho}(I_3)| = 3$. 
Furthermore, condition (d) and Lemma \ref{lem:lift_dihedral_even}(ii) imply that$|\chi(I_q)| = 2^k$. 
Therefore, we have $\mathrm{ord}(\chi) = 3 \times 2^k$. 
\end{proof}

 \begin{rem}
     By checking LMFDB \cite[Newform orbit 2601.1.x.a]{LMFDB}, one can find an example of a strongly minimal newform of $A_4$-type whose nebentypus has order $48$.
     This example coincides with the one constructed in Proposition \ref{prop:exicentece_A4_minimal}. 
     
     An $A_4$-type strongly minimal newform whose nebentypus has order $24$ cannot be found in LMFDB (\cite{LMFDB}).    
However, one can check its existence explicitely as follow: 
Let us take $q := 89$, which is the second smallest prime satisfying  $q \equiv \pm 1 \pmod{9}$ and $q \equiv 9 \pmod{16}$. 
Let $a \in \RR$ be one of the roots of the equation $X^3-3X -1 = 0$. 
Then we have $K = \QQ(a)$ and $\cO_K = \ZZ + \ZZ a + \ZZ a^2$ (see, for example, \cite[Number field 3.3.81.1]{LMFDB}). 
Since 
\[
12^3 - 3 \times 12 - 1 = 1691 = 19 \times 89,  
\]
we have a canonical isomorphism 
\[
\cO_K/(a-12)\cO_K = 
\ZZ/1691\ZZ \stackrel{\sim}{\to}
\ZZ/19\ZZ \times \ZZ/89\ZZ. 
\]
Moreover, $a + 1 \in \cO_K^\times$ and satisfies  
\begin{align*}
a+1 &\equiv 13 \pmod{(a-12)\cO_K}.
\end{align*}
Since $13$ is not a quadratic residue modulo $89$,  it follows from the proofs  of Lemma \ref{lem:A4-extension} and Proposition \ref{prop:exicentece_A4_minimal} that the existence of  an $A_4$-type strongly minimal newform with nebentypus of order $24$ and level $3^2 \times 89^2 = 71289$ is guaranteed. 
 \end{rem}

\subsubsection{$S_4$-type case}

Let $K_1/\QQ$ be the field defined by the polynomial $X^6 - 3X^4 - 5X^3 + 52X^2 - 92X + 56$ and 
let $K_2/\QQ$ be the field defined by the polynomial $X^6 - X^5 - 23X^4 - 45X^3 + 241X^2 + 1109X + 1177$. 
First, we list the properties of $K_1$ and $K_2$ that will be used below (see, for example, \cite[Number field 6.0.7880599.1 and Number field 6.0.1172648743.1]{LMFDB}: 
\begin{itemize}
    \item[(A)] The extension $K_i/\QQ$ is Galois with $\gal(K_i/\QQ) \simeq S_3$. 
    \item[(B)] The class number of $K_i$ is equal to $3$.
    \item[(C-1)] The primes ramified in $K_1$ are precisely $199$. Moreover, there are exactly three primes of $K_1$ lying above $199$.
\item[(C-2)]
The primes ramified in $K_2$ are precisely $7$ and $43$. Moreover, there are exactly three primes of $K_2$ lying above $7$, and exactly two primes of $K_2$ lying above $43$.
\item[(D)] There are exactly two primes of $K_i$ lying above $2$.
\end{itemize}
Note that, for any group isomorphism $\phi \colon S_3 \stackrel{\sim}{\to} \mathrm{Aut}(V_4)$, the semi-direct product $V_4 \rtimes_{\phi} S_3$ is isomorphic to $S_4$. 
We will construct an $S_4$-extension of $\QQ$ by building a $V_4$-extension of the $S_3$-extension $K_i/\QQ$.
Let $\sigma \in \Gal(K_i/\bQ) \simeq S_3$ be an element of order $3$.

\begin{lem}\label{lem:pm_square_existence_S4}
Let $i \in \{1,2\}$. 
    For any element $r \in \cO_{K_i}$ which is coprime to $2\cO_{K_i}$, 
     the congruence $X^2 \equiv  r r^\sigma \pmod{4\cO_{K_i}}$  admits a solution in $\cO_{K_i}$.  
\end{lem}
\begin{proof}
By property (D), there exist two distinct prime ideals $I_1, I_2 \subset \cO_{K_i}$ such that $2\cO_{K_i} = I_1I_2$. 
Since the order of $\sigma$ is $3$, we have $I_j^\sigma = I_j$. 
Hence  the solvability of $X^2 \equiv r \pmod{I_j^2}$ is equivalent to that of $X^2 \equiv r^\sigma \pmod{I_j^2}$. 
Since $(\cO_K/I_j^2)^\times \otimes_{\bZ} \bF_2 \simeq \bF_2$, 
the congruence $X^2 \equiv  r r^\sigma \pmod{I_j^2}$  admits a solution. 
\end{proof}

\begin{lem}\label{lem:S_4-units}
For each integer $i \in \{1,2\}$, the following exact sequence of $\bZ[\Gal(K_i/\bQ)]$-modules is split: 
\[
0 \to \{\pm 1\} \to \cO_{K_i}^\times 
\to \cO_{K_i}^\times/\{\pm 1\} \to 0. 
\]
\end{lem}
\begin{proof}
    Let $F/\QQ$ denote the quadratic extension satisfying $F \subset K_i$. 
Then $F = \QQ(\sqrt{-199})$ if $i=1$ and  $F = \QQ(\sqrt{-7})$ if $i=2$. 
In particular, $\cO_F^\times = \{\pm 1\}$. 
Since $[K_i \colon F] = 3$,  the norm map $\mathrm{Norm}_{K_i/F} \colon \cO_{K_i}^\times \to \cO_F^\times = \{\pm 1\}$  gives the splliting of the injection $\{\pm 1\} \hookrightarrow \cO_{K_i}^\times$. 
\end{proof}

Lemma \ref{lem:S_4-units}, together with Dirichlet's unit theorem, implies that one can choose elements $\epsilon_1, \epsilon_2 \in \cO_{K_i}^\times$  such that 
\[
 \epsilon_1^\sigma = \epsilon_2, \quad  \epsilon_2^\sigma = \epsilon_1 \epsilon_2, \quad  \textrm{$\{1, \epsilon_1, \epsilon_2, \epsilon_1\epsilon_2 \}$ is an $S_3$-module}. 
\]
Let $\tau$ denote the non-trivial element in $\gal(K_i/\QQ)$ satisfying $\epsilon_1^\tau = \epsilon_1$. Note that $\tau$ has order $2$ and satisfies $\tau \sigma = \sigma^2 \tau$. 


\begin{lem}\label{lem:generator_S4}
Let $i \in \{1,2\}$. 
    For any principal ideal $I \subset \cO_{K_i}$ with $I^\tau = I$, there exists an element $s \in \cO_{K_i}$ such that $s^\tau = \pm s$ and $s\cO_{K_i} = I$. 
\end{lem}
\begin{proof}
Dirichlet's unit theorem (see \cite[Proposition 8.7.2]{NSW08}) shows that there is an isomorphism $\cO_{K_i}^\times/\{\pm 1\} \simeq \ZZ^2$ of $\Gal(K_i/\bQ)$-modules  such that the automorphism $\tau$ acts on $\ZZ^2$ by $\begin{pmatrix}0&1\\1&0\end{pmatrix}$. 
This fact implies that the first cohomology group $H^1(\langle \tau \rangle, \cO_{K_i}^\times/\{\pm 1\})$ vanishes.
It therefore follows from Lemma~\ref{lem:S_4-units} that the injection $\{\pm 1\} \hookrightarrow \cO_{K_i}^\times$ induces an isomorphism
\[
H^1(\langle \tau \rangle, \{\pm 1\}) \xrightarrow{\ \sim\ }
H^1(\langle \tau \rangle, \cO_{K_i}^\times).
\]
Consequently, the exact sequence of $\Gal(K_i/\bQ)$-modules
\[
0 \to \cO_{K_i}^\times \to K_i^\times \to K_i^\times/\cO_{K_i}^\times \to 0
\]
gives rise to an exact sequence
\[
0 \to (\cO_{K_i}^\times)^{\tau = 1}
\to (K_i^\times)^{\tau = 1}
\to (K_i^\times/\cO_{K_i}^\times)^{\tau = 1}
\to H^1(\langle \tau \rangle, \cO_{K_i}^\times)
= H^1(\langle \tau \rangle, \{\pm 1\}), 
\]
which proves this lemma. 
\end{proof}

\begin{lem}\label{lem:S4-extension_1}
For any positive  integer $k$,  there exist infinitely many  primes $q \neq 199$ with  $\mathrm{ord}_2(q-1) = k$ for which an odd projective Galois representation $\bar{\rho} \colon G_\QQ \to \pgl_2(\CC)$  exists satisfying the following properties:
    \begin{itemize}
    \item[(a)] $\mathrm{Gal}(L/\QQ) \simeq S_4$, 
    \item[(b)] $\bar{\rho}$ is unramified outside $199$, $q$, and $\infty$, 
    \item[(c)] $\bar{\rho}(G_{\QQ_{199}}) \simeq \ZZ/2\ZZ$ or $D_4$, 
    \item[(d)] $\bar{\rho}(G_{\QQ_q}) \simeq D_4$. 
    \end{itemize}
\end{lem}
\begin{proof}

Consider the field 
\[
M := K_1(\sqrt{\epsilon_1}, \sqrt{\epsilon_2}, \zeta_{2^{k+1}}). 
\]
Since $\gal(K_1(\sqrt{\epsilon_1}, \sqrt{\epsilon_2})/\QQ) \simeq S_4$ by the choice of the units $\epsilon_1$ and $\epsilon_2$, 
we have $K_1(\sqrt{\epsilon_1}, \sqrt{\epsilon_2}) \cap K_1(\zeta_{2^{k+1}}) = K_1$, so that
\begin{align*}
    \gal(M/\QQ) &\simeq \gal(K_1(\sqrt{\epsilon_1}, \sqrt{\epsilon_2})/\QQ) \times \gal(\QQ(\zeta_{2^{k+1}})/\QQ)
    \\
    &\simeq S_4 \times (\ZZ/2^{k+1}\ZZ)^\times. 
\end{align*}
Therefore, by the Chebotarev density theorem, there are infinitely many  odd primes $q \neq 199$ such that the Frobenius conjugacy class at $q$ contains the element 
\[
(\begin{pmatrix}
    1&2&3&4
\end{pmatrix},  1 + 2^k)\in S_4 \times (\ZZ/2^{k+1}\ZZ)^\times
\]
under the above isomorphism. 
By construction, $q \equiv 1 + 2^k \pmod{2^{k+1}}$, which implies that  $\mathrm{ord}_2(q-1) = k$. 

Since $\begin{pmatrix}
    1&2&3&4
\end{pmatrix} \not\in V_4$,  the choice of the prime $q$ implies that there are exactly three primes $\mathfrak{q}_1, \mathfrak{q}_2, \mathfrak{q}_3$ of $K_1$ lying above $q$.
We may assume that $\mathfrak{q}_1^\sigma = \mathfrak{q}_2$ and $\mathfrak{q}_1^\tau = \mathfrak{q}_1$.

Since $\mathfrak{q}_1^\tau = \mathfrak{q}_1$, solvability of $X^2 \equiv \epsilon_2 \pmod{\mathfrak{q}_1}$ in $\cO_{K_1}$ is equivalent to that of
$X^2 \equiv \epsilon_2^\tau \pmod{\mathfrak{q}_1}$. 
Hence, the identity
\[
\epsilon_2 \epsilon_2^\tau = \epsilon_2 \epsilon_1^{\sigma\tau} = \epsilon_2 \epsilon_1^{\tau\sigma^2} = \epsilon_1 \epsilon_2^2
\]
implies that the congruence
\[
X^2 \equiv \epsilon_1 \pmod{\mathfrak{q}_1}
\]
admits a solution in $\cO_{K_1}$.
Since, by our choice of the rational prime $q$, the prime $\mathfrak{q}_1$ does not split
completely in $K_1(\sqrt{\epsilon_1}, \sqrt{\epsilon_2})$, the congruence
\[
X^2 \equiv \epsilon_2 \pmod{\mathfrak{q}_1}
\]
has no solution in $\cO_{K_1}$.

By property (B) and Lemma \ref{lem:generator_S4}, there exists an element $Q \in \cO_{K_1}$ satisfying $Q\cO_K = \mathfrak{q}_1^3$ and $Q^\tau = \pm Q$. 
We then have
\[
\pm q^3 = Q\, Q^\sigma\, Q^{\sigma^2}.
\]
By replacing $Q$ with $\epsilon_1 Q$, the element $Q^\sigma$ is replaced by $\epsilon_2 Q^\sigma$. Hence we may assume that the
congruence
\[
X^2 \equiv Q^\sigma \pmod{\mathfrak{q}_1}
\]
admits a solution in $\cO_{K_1}$. 
Using the relation $Q^{\sigma\tau} = Q^{\sigma^2}$, we obtain
\[
Q Q^\sigma Q^{\sigma^2}
= \pm q^3
= (Q Q^\sigma Q^{\sigma^2})^{\tau}
= \pm Q Q^{\sigma^2} Q^{\sigma},
\]
which implies that $Q^{\tau} = Q$.
It therefore follows from the relation $\mathfrak{q}_1^\tau = \mathfrak{q}_1$ that the congruence
\[
X^2 \equiv Q^\sigma Q^{\sigma^2} \pmod{\mathfrak{q}_1}
\]
admits a solution in $\cO_{K_1}$. 
Finally, by Lemma \ref{lem:pm_square_existence_S4}, the congruence
$X^2 \equiv QQ^\sigma \pmod{4\cO_{K_1}}$ also has a solution in $\cO_{K_1}$.


    Under the above preparations, set $L := K_1(\sqrt{QQ^{\sigma}}, \sqrt{Q^\sigma Q^{\sigma^2}})$. 
    Since
    \[
    \sqrt{QQ^{\sigma}}\sqrt{Q^{\sigma}Q^{\sigma^2}}\sqrt{Q^{\sigma^2}Q} = q^3, \quad Q^\tau = \pm Q, 
    \]
    the extension $L/\QQ$ is Galois, and there is an isomorphism $\mathrm{Gal}(L/\QQ) \simeq S_4$. 
Hence we obtain the projective Galois representation 
\[
\bar{\rho} \colon G_\QQ \to \gal(L/\QQ) \simeq S_4 \hookrightarrow \pgl_2(\CC). 
\]
Since $K_1$ is totally imaginary, the projective Galois representation  $\bar{\rho}$ is odd.

Since the congruence $X^2 \equiv QQ^{\sigma} \pmod{4\cO_{K_{1}}}$ has a solution, the primes dividing $2\cO_K$ are unramified in $L$.
Hence by construction, condition (b) is satisfied. 
Condition (c) follows from the property (C-1). 
Since the congruence $X^2 \equiv Q^\sigma \pmod{\mathfrak{q}_1}$ admits a solution, we have
\[
Q Q^{\sigma^2}
= \pm q^3/Q^{\sigma}
\in q \cdot (K_{1,\mathfrak{q}_1}^\times)^2.
\]
Therefore,
\[
K_{1,\mathfrak{q}_1}\bigl(\sqrt{Q Q^{\sigma}}, \sqrt{Q^\sigma Q^{\sigma^2}}\bigr)
= K_{1,\mathfrak{q}_1}(\sqrt{q}).
\]
Since $ K_{1,\mathfrak{q}_1}/\bQ_q$ is the unramified quadratic extension, this fact implies condition~(d). 
\end{proof}

\begin{lem}\label{lem:S4-extension_2}
For any positive  integer $k$,  there exist infinitely many  primes $q \neq 7, 43$ with  $\mathrm{ord}_2(q-1) = k$ for which an odd projective Galois representation $\bar{\rho} \colon G_\QQ \to \pgl_2(\CC)$  exists satisfying the following properties:
    \begin{itemize}
    \item[(a)] $\bar{\rho}(G_\QQ) \simeq S_4$, 
    \item[(b)] $\bar{\rho}$ is unramified outside $7$, $43$, $q$, and $\infty$, 
    \item[(c)] $\bar{\rho}(G_{\QQ_{43}}) = \bar{\rho}(I_{43}) \simeq \ZZ/3\ZZ$, 
    \item[(d)] $\bar{\rho}(G_{\QQ_q}) \simeq D_4$. 
    \end{itemize}
\end{lem}
\begin{proof}
By applying exactly the same method as in the proof of Lemma \ref{lem:S4-extension_1}, with $K_1$ replaced by $K_2$, we obtain an odd projective Galois representation
\[
\bar{\rho} \colon G_\QQ \to \mathrm{PGL}_2(\mathbb{C}),
\]
satisfying conditions (a), (b), and (d).
Among the subgroups of $S_4$, only those isomorphic to $\mathbb{Z}/3\mathbb{Z}$ admit a surjection onto $\mathbb{Z}/3\mathbb{Z}$. Therefore, property (C-2) implies condition (c). 
\end{proof}

\begin{prop}\label{prop:exicentece_S4_minimal_2^k}
    For any positive integer $k$, there exists a strongly minimal newform of $S_4$-type with nebentypus of order $2^k$. 
\end{prop}
\begin{proof}
By Lemma \ref{lem:S4-extension_1}, there exists an odd prime $q \neq 199$ with $\mathrm{ord}_2(q-1) = k$ and an odd projective Galois representat $\bar{\rho} \colon G_\QQ \to \pgl_2(\CC)$  satisfying the following properties:
    \begin{itemize}
    \item[(a)] $\mathrm{Gal}(L/\QQ) \simeq S_4$, 
    \item[(b)] $\bar{\rho}$ is unramified outside $199$, $q$, and $\infty$, 
    \item[(c)] $\bar{\rho}(G_{\QQ_{199}}) \simeq \ZZ/2\ZZ$ or $D_4$, 
    \item[(d)] $\bar{\rho}(G_{\QQ_q}) \simeq D_4$. 
    \end{itemize}
Since $\bar{\rho}(G_\QQ) \simeq S_4$, it follows from the theorem of Langlands--Tunnel \cite{Tunnell81} that  one can choose a strongly minimal newform $f$ satisfying $\bar{\rho}_f \simeq \bar{\rho}$. 
From Lemma \ref{lem:order_character} and condition (b), we obtain 
\[
\mathrm{ord}(\chi) = \lcm\{|\chi(I_{199})|, |\chi(I_q)|\}. 
\]
 Lemma \ref{lem:lift_cyclic} or Lemma \ref{lem:lift_dihedral_even}(ii), combined with condition (c), implies $|\chi(I_{199})| = 2$. 
Furthermore, condition (d) and Lemma \ref{lem:lift_dihedral_even}(ii) imply that$|\chi(I_q)| = 2^k$. 
Therefore, we have $\mathrm{ord}(\chi) =  2^k$. 
\end{proof}

\begin{prop}\label{prop:exicentece_S4_minimal_3*2^k}
    For any positive integer $k$, there exists a strongly minimal newform of $S_4$-type with nebentypus of order $3 \times 2^k$. 
\end{prop}
\begin{proof}
By Lemma \ref{lem:S4-extension_2}, there exists an odd prime $q \neq 7, 43$ with $\mathrm{ord}_2(q-1) = k$ and an odd projective Galois representat $\bar{\rho} \colon G_\QQ \to \pgl_2(\CC)$  satisfying the following properties:
    \begin{itemize}
    \item[(a)] $\bar{\rho}(G_\QQ) \simeq S_4$, 
    \item[(b)] $\bar{\rho}$ is unramified outside $7$, $43$, $q$, and $\infty$, 
    \item[(c)] $\bar{\rho}(G_{\QQ_{43}}) = \bar{\rho}(I_{43}) \simeq \ZZ/3\ZZ$, 
    \item[(d)] $\bar{\rho}(G_{\QQ_q}) \simeq D_4$. 
    \end{itemize}
Since $\bar{\rho}(G_\QQ) \simeq S_4$, it follows from the theorem of Langlands--Tunnel \cite{Tunnell81} that one can choose a strongly minimal newform $f$ satisfying $\bar{\rho}_f \simeq \bar{\rho}$. 
From Lemma \ref{lem:order_character} and condition (b), we obtain 
\[
\mathrm{ord}(\chi) = \lcm\{|\chi(I_{7})|, |\chi(I_{43})|, |\chi(I_q)|\}. 
\]
It  follows from condition (c) and Lemma \ref{lem:lift_cyclic}  that $|\chi(I_{43})| = |\bar{\rho}(I_{43})| = 3$. 
Furthermore, condition (d) and Lemma \ref{lem:lift_dihedral_even}(ii) together imply that$|\chi(I_q)| = 2^k$. 
Since $\chi$ is tamely ramified at $7$, the order $|\chi(I_7)|$ is divisible by $6$. 
Therefore, we have $\mathrm{ord}(\chi) = 3 \times 2^k$. 
\end{proof}

\begin{rem}
    As shown in Theorem \ref{thm:mainS4general}, when $\mathrm{ord}_2(d) \leq 2$, there are two possible Hecke fields for newforms of $S_4$-type. 
     In the above construction, it is not clear whether there exist strongly minimal newforms realizing each of these Hecke fields. 
     However, by checking LMFDB (\cite{LMFDB}), one can easily see that when $d \in \{2,4,6,12\}$, all of the candidates actually occur as the Hecke fields of strongly minimal newforms.
\end{rem}

\section{The group structure of the image of $\rho_f$}
\label{sec:group_structure_of_the_image_of_rho_f}
In this section, we continue to let $f$ be a weight one exotic newform with nebentypus $\chi$, and let $d$ denote the order of $\chi$.
This section is devoted to determining the image of the Galois representation $\rho_f \colon G_\bQ \to \mathrm{GL}_2(\bC)$ associated with $f$, in terms of the type of $f$ and the order $d$ of $\chi$. 
In the case of square-free level, an analogous result was obtained by Kida and Sudo \cite{kida2017two}.

\subsection{Preliminaries}
\subsubsection{Settings}
Let $G := \rho_f(G_\bQ) \subset \gl_2(\CC)$ denote the image of $\rho_f$. Write $K := G\cap \operatorname{SL}_2(\CC) = \ker (\det|_G)$.
Also, let 
$\pi\colon \gl_2(\CC)\twoheadrightarrow \pgl_2(\CC)$
denote the natural projection, and set
$\overline{G}:=\pi(G)$ and  $\overline{K}:=\pi(K)$; i.e., the projective images of $G$ and $K$, respectively. 

We write $I := \begin{pmatrix}1&0\\0&1\end{pmatrix}$ for the $2$ by $2$ identity matrix. 
Since  $\ker (\pi|_G)=G\cap \CC^\times I$ is a finite subgroup of $\CC^\times$, it is of the form $\mu_a I$ for some integer $a\geq 1$, where $\mu_a\subset\CC^\times$ denotes the group of $a$-th roots of unity. 
Since $\det \colon G/K \stackrel{\sim}{\to} \mu_d$ is cyclic,   $\overline{G}/\overline{K}$ is also cyclic. 
Finally, we remark on the following lemma; it follows from the fact that the groups $A_4$, $S_4$, and $A_5$ are centerless.

\begin{lem}
    The center of the group $G$ is $\mu_a I$. 
\end{lem}

\subsubsection{The structure of the group $K$}

We begin by recalling the classical result of Klein on the classification of finite subgroups of $\operatorname{SL}_2(\CC)$ (see, for example, \cite{Flicker}). 
 
\begin{prop}
\label{prop:SL2classification}
The finite subgroups of $\operatorname{SL}_2(\CC)$ are isomorphic to one of the following (the indices indicate the orders of the groups):
\begin{itemize}
    \item the cyclic group $\ZZ/n\ZZ$,
    \item the binary dihedral group $BD_{4n}$,
    \item the binary tetrahedral group $BT_{24} \simeq \operatorname{SL}_2(\FF_3)$, which is the double cover of $A_4$, 
    \item the binary octahedral group $BO_{48}$, which is a double cover of $S_4$, 
    \item the binary icosahedral group $BI_{120} \simeq \operatorname{SL}_2(\FF_5)$, which is the double cover of $A_5$.  
\end{itemize}
\end{prop}


\begin{rem}\label{rem:double_cover_S_4}
The symmetric group $S_4$ of degree four has two double covers: one is $BO_{48}$, and the other is $\GL_2(\bF_3)$ (note that $\PGL_2(\bF_3) \simeq \mathrm{Aut}(\bP^1(\bF_3)) \simeq S_4$).
The binary octahedral group $BO_{48}$ has the presentation
\[
\langle a,b,c \mid a^4 = b^3 = c^2 = abc \rangle 
\] 
and  $BO_{48}/\langle abc \rangle \simeq  S_4$. 
The group $BO_{48}$ has no subgroup of order $2$ other than its center $\langle abc \rangle$, whereas the group $\GL_2(\bF_3)$ possesses a non-normal subgroup of order $2$. 
This property allows one to distinguish between the two groups. 
\end{rem}

\begin{prop}
\label{prop:Kbar}
    We have the following cases:
    \[
    \overline{K} \simeq \begin{cases}
        V_4 \ \ \textrm{or} \ \ A_4 & \textrm{ if } \,\, \overline{G}\simeq A_4, 
        \\
        A_4 \ \ \textrm{or} \ \ S_4 & \textrm{ if } \,\, \overline{G}\simeq S_4,\\
        A_5  & \textrm{ if } \,\, \overline{G}\simeq A_5,
    \end{cases}
    \] 
    where $V_4 \subset A_4$ denotes the Klein four-group, which is isomorphic to $\ZZ/2\ZZ \times \bZ/2\bZ$.
    In particular, 
    \[
    |\overline{G}/\overline{K}| = \begin{cases}
        3 \ \ \textrm{or} \ \ 1 & \textrm{ if } \,\, \overline{G}\simeq A_4, 
        \\
        2 \ \ \textrm{or} \ \ 1 & \textrm{ if } \,\, \overline{G}\simeq S_4,\\
        1  & \textrm{ if } \,\, \overline{G}\simeq A_5.
    \end{cases}
    \] 
\end{prop}
\begin{proof}
Since $\overline{G}$ is non-commutative and $\overline{G}/\overline{K}$ is cyclic, $\overline{K}$ is a non-trivial normal subgroup of $\overline{G}$, and this fact implies the assertion.
\end{proof}

\begin{cor}
\label{cor:Kclassification}
    We have the following cases:
    \[
    K \simeq \begin{cases}
        Q_8 & \textrm{ if } \,\, \overline{G}\simeq A_4 \, \, \textrm{ and } \,\, \overline{K} \neq \overline{G},
        \\
        \SL_2(\bF_3) & \textrm{ if } \,\, \overline{G}\simeq A_4 \, \, \textrm{ and } \,\, \overline{K} = \overline{G}, 
        \\
        \SL_2(\bF_3) & \textrm{ if } \,\, \overline{G}\simeq S_4 \, \, \textrm{ and } \,\, \overline{K} \neq \overline{G}, 
        \\
        BO_{48} & \textrm{ if } \,\, \overline{G}\simeq S_4 \, \, \textrm{ and } \,\, \overline{K} = \overline{G}, 
        \\
        \SL_2(\bF_5)  & \textrm{ if } \,\, \overline{G}\simeq A_5.
    \end{cases}
    \]
\end{cor}
\begin{proof}
Since the  binary dihedral $BD_8$ of order $8$ is isomorphic to the the quaternion group $Q_8$, this corollary follows from  Propositions \ref{prop:SL2classification} and \ref{prop:Kbar}.
\end{proof}


\begin{cor}
\label{cor:order2}
    We have $-I\in K$. In particular, $K\cap \mu_aI=\{\pm I\}$, the integer $a$ is even, and the group $\overline{G}/\overline{K}$ is cyclic of order $2d/a$.
\end{cor}
\begin{proof}
Note that $-I$ is the unique element of order $2$ in $\operatorname{SL}_2(\CC)$. 
Hence, any finite subgroup of $\operatorname{SL}_2(\CC)$ of even order contains $-I$. 
The remaining assertions are then clear (see the commutative diagram \eqref{eq:nine_exact_sequence} below). 
\end{proof}

In what follows, for each integer $n \geq 1$, the cyclic group $\ZZ/n\ZZ$ is identified with $\mu_n \subset \CC^\times$ via the homomorphism $k \bmod n \mapsto \exp(2\pi \sqrt{-1} k / n)$. 
Then the situation is summarized as the following commutative diagram, all of whose horizontal and vertical lines are exact: 
\begin{align}\label{eq:nine_exact_sequence}
\begin{split}
        \xymatrix{
  & 1 \ar[d] & 1 \ar[d] & 1 \ar[d] & \\
 1 \ar[r] & \bZ/2\bZ \ar[d] \ar[r] & \bZ/a\bZ \ar[d] \ar[r]^-{\det} 
      & \bZ/(a/2)\bZ \ar[d] \ar[r] & 1 \\
  1 \ar[r] & K \ar[d]_-\pi \ar[r] & G \ar[d]_-\pi \ar[r]^-{\det} & \bZ/d\bZ \ar[d] \ar[r] & 1 \\
1 \ar[r]  & \overline{K} \ar[d] \ar[r] & \overline{G} \ar[d] \ar[r] & \overline{G}/\overline{K} \ar[r] \ar[d] & 1  \\
  & 1 & 1 & 1 &
}
\end{split}
\end{align}

\subsection{Main results on the image of $\rho_f$}
In this subsection, we explain the main results of \S\ref{sec:group_structure_of_the_image_of_rho_f}. 
Recall that $\pi \colon \mathrm{GL}_2(\CC) \twoheadrightarrow \mathrm{PGL}_2(\CC)$ is the projection map, and that
\[
\overline{G} = \pi(\rho_f(G_\QQ)) \quad \text{and} \quad \overline{K} = \pi(\rho_f(G_\QQ) \cap \mathrm{SL}_2(\CC)).
\]
Also recall that $d$ is the order of $\chi$, and that $I = \begin{pmatrix}1 & 0 \\ 0 & 1\end{pmatrix}$ denotes the $2 \times 2$ identity matrix.

\begin{thm}\label{thm:Galois_image_A_4_type}
Suppose that $f$ is of $A_4$-type. 
\begin{itemize}
    \item[(i)] If $\overline{K} = \overline{G}$, then the Galois image $\rho_f(G_\QQ)$ is isomorphic to the group 
    \[
    (\SL_2(\bF_3) \times \ZZ/2d\ZZ)/\langle (-I, d) \rangle.
    \]
    \item[(ii)] If $\overline{K} \neq \overline{G}$, then the integer $d$ is divisible by $3$, and the Galois image $\rho_f(G_\QQ)$ is isomorphic to the group 
    \[
    (\SL_2(\bF_3) \times_{\ZZ/3\ZZ} \ZZ/2d\ZZ)/\langle (-I, d) \rangle.
    \]
Here, the homomorphism $\SL_2(\FF_3) \twoheadrightarrow \ZZ/3\ZZ$ used in the fiber product is defined as the composition
\[
\SL_2(\FF_3) \twoheadrightarrow \SL_2(\FF_3)^{\mathrm{ab}}  \simeq \ZZ/3\ZZ.
\]
\end{itemize}
\end{thm}
\begin{proof}
        Claim (i) follows from Corollary \ref{cor:Kclassification} and Proposition \ref{prop:proof_of_thm_when_K=G} below. 
        Claim (ii) follows from Proposition \ref{prop:A4_type_image} below. 
\end{proof}

\begin{thm}\label{thm:Galois_image_A_5_type}
If $f$ is of $A_5$-type, then the Galois image $\rho_f(G_\QQ)$  is isomorphic to the group 
    \[
    (\SL_2(\bF_5) \times \ZZ/2d\ZZ)/\langle (-I, d) \rangle.
    \]
\end{thm}
\begin{proof}
    This theorem follows from Corollary \ref{cor:Kclassification} and Proposition \ref{prop:proof_of_thm_when_K=G} below. 
\end{proof}

\begin{thm}\label{thm:Galois_image_S_4_type}
Suppose that $f$ is of $S_4$-type. 
Let $BO_{48}$ be the binary dihedral group of order $48$ and 
$z \in BO_{48}$ denotes the unique element of order $2$ (see Remark \ref{rem:double_cover_S_4}). 
\begin{itemize}
    \item[(i)] If $\overline{K} = \overline{G}$, then the Galois image $\rho_f(G_\QQ)$  is isomorphic to the group 
    \[
    (BO_{48} \times \ZZ/2d\ZZ)/\langle (z, d) \rangle. 
    \]
    \item[(ii)] If $\overline{K} \neq \overline{G}$, then the Galois image $\rho_f(G_\QQ)$  is isomorphic to the group 
    \[
(BO_{48} \times_{\bZ/2\bZ} \ZZ/2d\ZZ)/\langle (z, d) \rangle.
    \]
    Here, the homomorphism $BO_{48} \twoheadrightarrow \ZZ/2\ZZ$ used in the fiber product is defined as the composition
\[
BO_{48} \twoheadrightarrow BO_{48}^{\rm ab} \simeq   \ZZ/2\ZZ.
\]
\end{itemize}
\end{thm}
\begin{proof}
        Claim (i) follows from Corollary \ref{cor:Kclassification} and Proposition \ref{prop:proof_of_thm_when_K=G} below. 
        Claim (ii) follows from Proposition \ref{prop:S4_type_image} below. 
\end{proof}

\subsection{Criterion for determining whether $\overline{K} = \overline{G}$ or not}

When $f$ is of $A_4$- or $S_4$-type, it may happen that $\overline{K} \neq \overline{G}$. 
In this subsection, we give a necessary and sufficient condition for the inequality $\overline{K} \neq \overline{G}$ to occur. 
We also briefly discuss whether this condition can be effectively checked (under the Generalized Riemann Hypothesis). 




\subsubsection{$A_4$-case}
\begin{prop}
\label{prop:A4_criteriaon_K_neq_G}
    Suppose that $\overline{G}\simeq A_4$. Let $\varphi \colon G \twoheadrightarrow \bZ/3\bZ$ be the composite homomorphism
\[
\varphi \colon G \stackrel{\pi}{\twoheadrightarrow} \overline{G} \simeq A_4 \twoheadrightarrow \bZ/3\bZ. 
\]
Then $\overline{K} \neq \overline{G}$ if and only if $\ker\left(\mathrm{det} \colon G \to \bZ/d\bZ \right) \subset \ker(\varphi)$. In particular, if $3\nmid d$, then we have $\overline{K}=\overline{G}$ and $a=2d$. 
\end{prop}
\begin{proof}
Assume first that $\overline{K}\neq \overline{G}$; that is, $\overline{K}\simeq V_4$ by Proposition \ref{prop:Kbar}. In this case, the composite $G \stackrel{\pi}{\twoheadrightarrow} \overline{G} \twoheadrightarrow \overline{G}/\overline{K} \simeq \bZ/3\bZ$ is identified with the  homomorphism $\varphi$.
Since the homomorphism  $G \twoheadrightarrow \overline{G}/\overline{K}$ factors through $G \stackrel{\det}{\twoheadrightarrow} G/K$, we obtain $\ker(\mathrm{det}) \subset \ker(\varphi)$.  
When $\overline{K} = \overline{G}$, we have 
$\varphi(K) = \varphi(G) = \bZ/3\bZ$, since $\varphi$ factors through $\pi$ by definition. 
In particular, $\ker (\det) = K \not\subset \ker(\varphi)$. 
The remaining assertion follows from Corollary \ref{cor:order2}. 
\end{proof}

Next, we discuss whether the equivalent conditions for $\overline{K} \neq \overline{G}$ established in Proposition \ref{prop:A4_criteriaon_K_neq_G}  can be effectively verified.

Suppose that $\overline{G} \simeq A_4$ and  $3 \mid d$. 
We then have the two Dirichlet characters of order $3$
\[
\chi' \colon  G_\bQ \xrightarrow{\overline{\rho}_f}  \overline{G} \twoheadrightarrow \mu_3 \quad \text{and} \quad \mathrm{det}^{d/3} \circ \rho_f = \chi^{d/3}.
\]
Note that Lemma \ref{lem:BL_c(g)} shows that $\chi'(\mathrm{Frob}_p) = 1$ if and only if $a_p(f) = 0$ for any prime $p \nmid N$, where $N$ denotes the level of $f$. 
Since the homomorphism $\rho_f \colon G_\QQ \twoheadrightarrow G$ is  surjective by definition, it  follows from Proposition \ref{prop:A4_criteriaon_K_neq_G} that  $\overline{K} \neq \overline{G}$ if and only if  
\begin{align*}
\{p \mid \mathrm{gcd}(p,N) = 1 \, \textrm{ and } \, \chi^{d/3}(\mathrm{Frob}_p) = 1\} &= \{p \mid \mathrm{gcd}(p,N) = 1 \, \textrm{ and } \, \chi'(\mathrm{Frob}_p) = 1\} 
\\
&= \{p \mid \mathrm{gcd}(p,N) = 1 \, \textrm{ and } \, a_p(f) = 0\}.     
\end{align*}

\begin{lem}\label{lem:effective_chebotarev_order_3_dirichlet_character}
Let $L/\bQ$ be a Galois extension of degree $9$ unramified outside $N$. 
Let $\mathrm{rad}(N)$ denote the positive integer obtained as the product of the primes dividing $N$, that is,
\[
\mathrm{rad}(N) := \prod_{\ell \mid N} \ell.
\]
For any element $\sigma \in \mathrm{Gal}(L/\bQ)$,  there exists a prime $p \nmid N$  such that $p \leq (3^{18} \cdot \mathrm{rad}(N)^8 )^{12577}$ and $\mathrm{Frob}_p = \sigma$ in $\mathrm{Gal}(L/\bQ)$. 
Moreover, if we assume the Generalized Riemann Hypothesis, then one can replace the inequality $p \leq (3^{18} \cdot \mathrm{rad}(N)^8 )^{12577}$ with the inequality 
\[
p \leq 2^{10} \cdot (\mathrm{log} (\mathrm{rad}(N)) + 2)^2. 
\]
\end{lem}
\begin{proof}
Since $L/\bQ$ is a Galois extension of degree $9$ unramified outside $N$, its discriminant satisfies
\[
|\mathrm{disc}(L/\QQ)| \le 3^{18} \cdot \mathrm{rad}(N)^8.
\]
Hence, this lemma follows from effective versions of the Chebotarev density theorem proved by Ahn and Kwon  \cite[Theorem 1.1]{Jeoung-Hwan_Soun-Hi_effective_chebotarev} and by Bach and Sorenson   \cite[Theorem 5.1]{Bach_Soreson_1996}.
\end{proof}

\begin{prop}\label{prop:effective_version_K_neq_G}
Suppose that $\overline{G} \simeq A_4$ and that $3 \mid d$. 
Let 
\[
M := (3^{18} \cdot \mathrm{rad}(N)^8 )^{12577},
\]
where $N$ denotes the level of $f$. 
Then the following are equivalent:
\begin{itemize}
    \item[(a)] $\overline{K} \neq \overline{G}$.
    \item[(b)] $\ker\left(\chi \colon G_\bQ \twoheadrightarrow \mu_d \right) \subset \ker(G_\bQ \stackrel{\bar{\rho}_f}{\to} A_4 \twoheadrightarrow A_4^{\rm ab})$. 
    \item[(c)]  $\{\, p < M \mid (p,N)=1 \text{ and } \chi^{d/3}(\mathrm{Frob}_p)=1 \,\}
    =
    \{\, p < M \mid (p,N)=1 \text{ and } a_p(f)=0 \,\}$. 
\end{itemize}
Moreover, under the Generalized Riemann Hypothesis, one may replace $M$ by
\[
2^{10} \cdot (\mathrm{log} (\mathrm{rad}(N)) + 2)^2. 
\]
\end{prop}

\begin{rem}
Given a ($q$-expansion of a) weight one newform $f$ and a prime $p$, the values
$a_p(f)$ and $\chi(\mathrm{Frob}_p) = a_p(f)^2 - a_{p^2}(f)$ can be checked easily.
However, since $3^{18\times 12577}$ has roughly $108000$ digits., it is not
realistic to verify the equality in Proposition \ref{prop:effective_version_K_neq_G}(b) by brute-force
computation.
On the other hand, under  the Generalized Riemann
Hypothesis, checking the equality in Proposition \ref{prop:effective_version_K_neq_G}(b) is computationally feasible. 
\end{rem}


\begin{proof}
We have already shown that (a) and (b) are equivalent 
and that (a) implies (c), so it suffices to prove that (c) implies (a). 
Assume that $\overline{K} = \overline{G}$. 
Let $L$ be the number field corresponding to the open  subgroup
\[
\ker(\chi^{d/3}) \cap \ker(\chi') \subset G_\QQ.
\]
By Proposition~\ref{prop:A4_criteriaon_K_neq_G}, the extension $L/\QQ$ is abelian of degree $9$ and 
\[
(\chi^{d/3}, \chi') \colon  
\mathrm{Gal}(L/\bQ) \xrightarrow{\sim} \mu_3 \times \mu_3. 
\]
Hence Lemma \ref{lem:effective_chebotarev_order_3_dirichlet_character} implies that there exists a prime 
$p < M$ such that 
\[
\chi^{d/3}(\mathrm{Frob}_p) \neq 1 
\quad \text{and} \quad 
\chi'(\mathrm{Frob}_p) = 1.
\]
Since the condition $\chi'(\mathrm{Frob}_p)=1$ is equivalent to that $a_p(f)=0$, 
this shows that (c) does not hold, and hence completes the proof. 
\end{proof}

\subsubsection{$S_4$-case}
By the same argument as in Propositions \ref{prop:A4_criteriaon_K_neq_G} and \ref{prop:effective_version_K_neq_G}, a similar result is obtained in the $S_4$-case.

\begin{prop}
\label{prop:S4_criteriaon_K_neq_G}
Suppose that $\overline{G} \simeq S_4$. 
Let 
\[
M := (2^{24} \cdot 3^{12} \cdot \mathrm{rad}(N)^{11} )^{12577},
\]
where $N$ denotes the level of $f$. 
Then the following are equivalent:
\begin{itemize}
    \item[(a)] $\overline{K} = \overline{G}$.
    \item[(b)] $\mathrm{sgn}\circ \overline{\rho}_f \neq  \chi^{d/2}$ as Dirichlet characters. 
    \item[(c)]  There exists a prime $p \nmid N$ such that $p < M$ and $a_p(f)^{d} = -1$. 
\end{itemize}
Moreover, under the Generalized Riemann Hypothesis, one may replace $M$ by
\[
16 \cdot (11 \cdot \mathrm{log} (\mathrm{rad}(N)) + 12)^2. 
\]
\end{prop}
\begin{proof}
The equivalence between (a) and (b) follows by the same argument as in the proof of Proposition \ref{prop:A4_criteriaon_K_neq_G}. Therefore, it remains to show that (b) and (c) are equivalent. 
Since $\overline{G} \simeq S_4$, it follows from Lemma \ref{lem:BL_c(g)} that $|a_p(f)| = 1$
if and only if $\bar{\rho}_f(\mathrm{Frob}_p) \in S_4$ has order $3$, in which case
\[
a_p(f)^d = \chi^{d/2}(p).
\]
Thus, if $\mathrm{sgn}\circ \overline{\rho}_f = \chi^{d/2}$, then for any prime
$p \nmid N$ such that $\bar{\rho}_f(\mathrm{Frob}_p) \in S_4$ has order $3$, it follows that
\[
a_p(f)^d
= \chi^{d/2}(p)
= \mathrm{sgn}\circ \overline{\rho}_f(\mathrm{Frob}_p)
= 1.
\]
This proves that (c) implies (b). We now prove the converse.
Suppose that $\mathrm{sgn}\circ \overline{\rho}_f \neq \chi^{d/2}$.
Let $L_1$ denote the $S_3$-extension of $\bQ$ corresponding to the open subgroup
\[
\ker\bigl(G_\bQ \stackrel{\bar{\rho}_f}{\twoheadrightarrow} S_4 \twoheadrightarrow S_3\bigr) \subset G_\bQ,
\]
and let $L_2$ denote the quadratic extension of $\bQ$ corresponding to the quadratic character $\chi^{d/2}$. 
The assumption that $\mathrm{sgn}\circ \overline{\rho}_f \neq \chi^{d/2}$ then implies that
$L_1 \cap L_2 = \bQ$, and hence
\[
\mathrm{Gal}(L_1L_2/\bQ) \simeq \mathrm{Gal}(L_1/\bQ) \times \mathrm{Gal}(L_2/\bQ) \simeq S_3 \times \{\pm 1\}.
\]
The effective version of the Chebotarev density theorem proved by
Ahn and Kwon \cite[Theorem 1.1]{Jeoung-Hwan_Soun-Hi_effective_chebotarev}
(resp. by Bach and Sorenson \cite[Theorem 5.1]{Bach_Soreson_1996} under the
Generalized Riemann Hypothesis) implies that there exists a prime
$p \nmid N$ with $p < M$ (resp. $p < 16 \cdot (11 \cdot \mathrm{log} (\mathrm{rad}(N)) + 12)^2$) such that the Frobenius conjugacy
class at $p$ in $\mathrm{Gal}(L_1L_2/\bQ) \simeq S_3 \times \{\pm 1\}$ contains $(\begin{pmatrix}1&2&3\end{pmatrix}, -1)$. 
Then the element $\bar{\rho}_f(\mathrm{Frob}_p) \in S_4$ has order $3$, and hence $a_p(f)^d = \chi^{d/2}(p) = -1$. 
\end{proof}

\subsection{The structure of $G$ when $\overline{K}=\overline{G}$}

First, we determine the structure of $G$ under the assumption $\overline{K}=\overline{G}$.
\begin{prop}\label{prop:proof_of_thm_when_K=G}
     Assume that $\overline{K}=\overline{G}$.
     \begin{itemize}
     \item[(i)]  We have $a=2d$ and  $G=K\cdot \mu_{2d}I$.
    \item[(ii)] The group $G$ is isomorphic to the group 
    \[
 (K\times \ZZ/{2d}\ZZ)/\langle (-I, d) \rangle. 
    \]
     \end{itemize}
 \end{prop}
 \begin{proof}
It follows from the assumption $\overline{K} = \overline{G}$  that  $G = K \cdot \mu_{a} I$ and that $a = 2d$ by Corollary \ref{cor:order2}. 
Claim (ii) follows from Corollary \ref{cor:order2} together with claim (i) since $\mu_a I$ is the center of $G$. 
 \end{proof}

\subsection{The structure of $G$ when $\overline{K}\neq \overline{G}$} 
Let us consider the case $\overline{K}\neq \overline{G}$. In this case, Proposition \ref{prop:Kbar} implies that $f$ is either of $A_4$-type or $S_4$-type. We shall determine the structure of $G$ in this setting. 

Note that, by the commutative diagram \eqref{eq:nine_exact_sequence}, 
the group \(G\) fits into the central extension
\begin{align}\label{eq:central_extension_Z/a_G_Gbar}
1 \to \ZZ/a\ZZ \to G \stackrel{\pi}{\to} \overline{G} \to 1.
\end{align} 
Such central extensions are classified by the second cohomology group $H^2(\overline{G}, \ZZ/a\ZZ)$ (see, for example, \cite[Theorem 1.2.4]{NSW08} or \cite[Theorem 6.6.3]{Weibel94}). 
We first recall the universal coefficients theorem for group cohomology.
\begin{prop}
\label{prop:UCT}
    Let $H$ be a finite group and $A$ be a finite abelian group.
    Then there exists an exact sequence of abelian gruops: 
    \[
    0\to \operatorname{Ext}^1_{\ZZ}(H^{\mathrm{ab}}, A)\to H^2(H,A)\to \operatorname{Hom}(H_2(H,\ZZ),A)\to 0,
    \]
   which splits non-canonically.
\end{prop}
\begin{proof}
    See \cite[VI. Theorem 15.1]{Hilton-Stammbach97} or \cite[(3.6.5)]{Weibel94}, for example.
\end{proof}

\begin{rem}
The homomorphism $\operatorname{Ext}^1_{\bZ}(H^{\mathrm{ab}}, A) \to H^2(H, A)$ is defined by sending
each extension of abelian groups
\[
0 \to A \to M \to H^{\mathrm{ab}} \to 0
\]
to its pullback along the surjective homomorphism $H \twoheadrightarrow H^{\mathrm{ab}}$, namely,
\[
0 \to A \to M \times_{H^{\mathrm{ab}}} H \to H \to 0.
\]
\end{rem}

\subsubsection{$A_4$-case}

Suppose here that $(\overline{G}, \overline{K})=(A_4,V_4)$.
In this case, it follows from Corollary \ref{cor:order2} that 
\[
|\overline{G}/\overline{K}|=3,  \quad 3a=2d. 
\]
Since $A_4^{\rm ab}\simeq \ZZ/3\ZZ$ and $H_2(A_4,\ZZ)\simeq \ZZ/2\ZZ$  
(see, for example, \cite[(2.22), Chapter 3]{Suzuki82}), 
the exact sequence given in Proposition \ref{prop:UCT} with $H=A_4$ and $A=\ZZ/a\ZZ$ yields a split exact sequence of abelian grouops
 \[
 0\to \operatorname{Ext}^1_{\bZ}(\ZZ/3\ZZ, \ZZ/a\ZZ)\to H^2(A_4,\ZZ/a\ZZ)\to \operatorname{Hom}(\ZZ/2\ZZ,\ZZ/a\ZZ)\to 0.
 \]
Write $a = 3^b c$ with $3 \nmid c$ and $c \in 2\bZ$. 
Then the canonical isomorphism $\ZZ/a\ZZ \stackrel{\sim}{\to} \ZZ/3^b\ZZ \times \ZZ/c\ZZ$ induces the following identifications:
\begin{align*}
\Ext^1_{\ZZ}(\ZZ/3\ZZ, \ZZ/a\ZZ)
&\stackrel{\sim}{\to} \Ext^1_{\ZZ}(\ZZ/3\ZZ, \ZZ/3^b\ZZ)
   \oplus \Ext^1_{\ZZ}(\ZZ/3\ZZ, \ZZ/c\ZZ) \\
&= \Ext^1_{\ZZ}(\ZZ/3\ZZ, \ZZ/3^b\ZZ) \oplus \{0\}, \\
H^2(A_4,\ZZ/a\ZZ)
&\stackrel{\sim}{\to} H^2(A_4,\ZZ/3^b\ZZ) \oplus H^2(A_4,\ZZ/c\ZZ), \\
\Hom(\ZZ/2\ZZ,\ZZ/a\ZZ)
&\stackrel{\sim}{\to} \Hom(\ZZ/2\ZZ,\ZZ/3^b\ZZ)
   \oplus \Hom(\ZZ/2\ZZ,\ZZ/c\ZZ) \\
&= \{0\} \oplus \Hom(\ZZ/2\ZZ,\ZZ/c\ZZ).
\end{align*}
Consequently, we obtain the following lemma: 

\begin{lem}\label{lem:str_of_ext_H^2_A_4_case}
The group $H^2(A_4, \ZZ/2\ZZ)$ is isomorphic to $\ZZ/2\ZZ$,
and the nontrivial element in $H^2(A_4, \ZZ/2\ZZ)$ corresponds to the central extension
\[
1 \to \ZZ/2\ZZ \to \SL_2(\FF_3) \to A_4 \to 1.
\]
Moreover, the unique injective homomorphism $\ZZ/2\ZZ \hookrightarrow \ZZ/c\ZZ$ induces an isomorphism
\[
H^2(A_4, \ZZ/2\ZZ) \stackrel{\sim}{\to} H^2(A_4, \ZZ/c\ZZ).
\]
\end{lem}

For notational simplicity, we set
\[
G_c := G/\ker(\bZ/a\bZ \twoheadrightarrow \bZ/3^b\bZ), \qquad
G_3 := G/\ker(\bZ/a\bZ \twoheadrightarrow \bZ/c\bZ).
\]
We then obtain two central extensions
\begin{align*}
    1 \to \bZ/c\bZ \to G_c \to A_4 \to 1, \qquad
    1 \to \bZ/3^b\bZ \to G_3 \to A_4 \to 1,
\end{align*}
together with the canonical isomorphism 
\begin{align}\label{eq:A_4_case_isom_G_3_and_G_c}
G \stackrel{\sim}{\to} G_c \times_{A_4} G_3. 
\end{align}

\begin{lem}
\label{lem:central_extension_A_4_V_4}\ 
    \begin{itemize}
        \item[(i)]  We have an isomorphism of groups 
        \[
        G_c\simeq (\SL_2(\bF_3) \times \ZZ/c\ZZ)/\langle (-I, c/2)\rangle.
        \]
        Moreover, the homomorphism $G_c \twoheadrightarrow A_4$ corresponds to the first projection.  
        \item[(ii)]
   We have an isomorphism of groups 
        \[
        G_3\simeq A_4 \times_{\ZZ/3\ZZ} \ZZ/{3^{b+1}}\ZZ.
        \]
            Moreover, the homomorphism $G_3 \twoheadrightarrow A_4$ corresponds to the first projection.  
    \end{itemize}
\end{lem}



\begin{proof}
Let us prove claim (i).  
Let $p_c \colon G \twoheadrightarrow G_c$ denote the canonical homomorphism. 
Since  $|K|=8$ (by Corollary \ref{cor:Kclassification}) and $|\ker (p_c)|=3^b$  are coprime, the group $K$ is isomorphic to the image $K_c := p_c(K)$, i.e., 
\[
K_c \simeq Q_8. 
\]
Also, the homomorphism $\pi_c \colon G_c \twoheadrightarrow A_4$ maps the group $K_c$ onto the group $\overline{K} = V_4$, i.e., 
\[
\pi_c(K_c) = V_4.
\]
If the central extension 
\[
1 \to \ZZ/c\ZZ \to G_c \stackrel{\pi_c}{\to} A_4 \to 1
\]
splits, then $G_c \simeq \ZZ/c\ZZ \times A_4$, and hence the group $K_c$ embeds into $\ZZ/c\ZZ \times V_4$. 
This contradicts the fact that $K_c \simeq Q_8$ is non-abelian.
Thus, the central extension
    \[
    1\to \ZZ/c\ZZ\to G_c\to A_4\to 
    1\]
    dose not split, and it defines the unique nontrivial element in  $H^2(A_4,\ZZ/c\ZZ)$. 
Hence the  assertion follows from Lemma \ref{lem:str_of_ext_H^2_A_4_case}.


Next, let us prove claim (ii).        
Put $p_3 \colon G \twoheadrightarrow G_3$ and $\pi_3 \colon G_3 \twoheadrightarrow A_4$. 
Since $|K|=8$ and $|\ker(\pi_3)|=3^ b$ are coprime, the homomorphism $\pi_3$ maps the group $p_3(K)$ isomorphically onto the group $\pi(K)= V_4$, i.e., 
\[
\pi_3 \colon p_3(K) \stackrel{\sim}{\to} V_4. 
\]
Thus we obtain the following commutative diagram with exact rows: 
\[
\xymatrix{
1 \ar[r] & \ZZ/3^b\ZZ \ar[r] \ar@{=}[d] 
  & G_3 \ar[r]^{\pi_3} \ar@{->>}[d] 
  & A_4 \ar[r] \ar@{->>}[d] 
  & 1 \\
1 \ar[r] & \ZZ/3^b\ZZ \ar[r] 
  & G_3/p_3(K) \ar[r] 
  & A_4^{\mathrm{ab}} \ar[r] 
  & 1. 
}
\]
This is a pullback diagram, that is, 
\[
G_3 \simeq  A_4  \times_{A_4^{\mathrm{ab}}} G_3/K_3. 
\]
Since $G_3/p_3(K)$ is a quotient of the cyclic group $G/K \stackrel{\sim}{\to} \bZ/d\bZ$, it is itself cyclic.
Therefore, $G_3/p_3(K) \simeq \bZ/3^{b+1}\bZ$, which completes the proof. 
\end{proof}


We now determine the structure of $G$ in the case $(\overline{G}, \overline{K}) = (A_4, V_4)$.

\begin{prop}
\label{prop:A4_type_image}
Suppose that $(\overline{G}, \overline{K}) = (A_4, V_4)$. 
Then the group $G$ can be expressed as
\[
G \simeq (\SL_2(\FF_3) \times_{\ZZ/3\ZZ} \ZZ/2d\ZZ)/\langle (-I, d) \rangle,
\]
where the fiber product is taken over an arbitrary surjection $\SL_2(\FF_3) \twoheadrightarrow \ZZ/3\ZZ$ and the natural projection $\ZZ/2d\ZZ \twoheadrightarrow \ZZ/3\ZZ$. 
\end{prop}
\begin{proof}
Since $3^{b+1}c = 3a = 2d$, it follows Lemma \ref{lem:central_extension_A_4_V_4} together with the isomorphism \eqref{eq:A_4_case_isom_G_3_and_G_c} that 
\begin{align*}
G &\simeq G_c \times_{A_4} G_3
\\
&\simeq G_c \times_{\bZ/3\bZ} \ZZ/3^{b+1}\ZZ
\\
&\simeq  (\SL_2(\bF_3) \times \bZ/c\bZ)/\langle (-I, d) \rangle \times_{\bZ/3\bZ} \ZZ/3^{b+1}\ZZ
\\
&\simeq (\SL_2(\bF_3) \times_{\bZ/3\bZ} \bZ/2d\bZ)/\langle (-I, d) \rangle. 
\end{align*}
\end{proof}
\subsubsection{$S_4$-case.}
We now assume that $(\overline{G},\overline{K})=(S_4,A_4)$. 
In this case, it follows from Corollary \ref{cor:order2} that 
\[
|\overline{G}/\overline{K}|=2,  \quad a=d. 
\]
Since $S_4^{\mathrm{ab}}\simeq \ZZ/2\ZZ$ and $H_2(S_4,\ZZ)\simeq \ZZ/2\ZZ$ (see \cite[(2.21), Chapter 3]{Suzuki82}, for example), the exact sequence given in Proposition \ref{prop:UCT} with $H = S_4$ and $A=\ZZ/d\ZZ$ yields a split exact sequence
 \[
 0\to \operatorname{Ext}^1_{\bZ}(\ZZ/2\ZZ, \ZZ/d\ZZ)\to H^2(S_4,\ZZ/d\ZZ)\to \operatorname{Hom}(\ZZ/2\ZZ,\ZZ/d\ZZ)\to 0.
 \]
Since $d$ is an even integer, we have an isomorphism 
\[
H^2(S_4,\ZZ/d\ZZ) \simeq \ZZ/2\ZZ \times \ZZ/2\ZZ.
\]

\begin{rem}\label{rem:four_element_in_H^2(S_4,Z/2Z)}
The following four groups are pairwise non-isomorphic:
\[
S_4 \times \bZ/2\bZ, \quad
S_4 \times_{\bZ/2\bZ} \bZ/4\bZ, \quad
BO_{48}, \quad
\GL_2(\bF_3).
\]
For instance, this can be seen easily by Remark \ref{rem:double_cover_S_4} and by comparing their abelianizations.
It therefore follows that the four elements of
$H^2(S_4,\ZZ/2\ZZ) \simeq \bZ/2\bZ \times \bZ/2\bZ$
are represented by the central extensions associated with these four groups.
For each of the four groups $\mathcal{G}$ above, we let $[\mathcal{G}] \in H^2(S_4, \bZ/2\bZ)$ denote the cohomology class represented by the central extension associated with $\mathcal{G}$.
Note that 
\[
\operatorname{Ext}^1_{\bZ}(\ZZ/2\ZZ, \ZZ/2\ZZ) = \{[S_4 \times \bZ/2\bZ], \, [S_4 \times_{\bZ/2\bZ} \bZ/4\bZ]\}. 
\]
\end{rem}

We denote by $[S_4 \times_{\bZ/2\bZ} \bZ/2d\bZ]$ the nontrivial element of
$\operatorname{Ext}^1_{\ZZ}(\ZZ/2\ZZ, \ZZ/d\ZZ) \subset H^2(S_4, \bZ/d\bZ)$
determined by the central extension
\[
1 \to \bZ/d\bZ \to S_4 \times_{\bZ/2\bZ} \bZ/2d\bZ \to S_4 \to 1,
\]
and by $[G]$ the element of $H^2(S_4, \bZ/d\bZ)$ determined by the central extension
\eqref{eq:central_extension_Z/a_G_Gbar}.

We consider the homomorphism
\[
j_* \colon H^2(S_4,\ZZ/2\ZZ) \to H^2(S_4,\ZZ/d\ZZ)
\]
induced by the injective homomorphism $j \colon \ZZ/2\ZZ \hookrightarrow \ZZ/d\ZZ$.


\begin{lem}\label{lem:S_4_H_2_genrator}
The cohomology group $H^2(S_4, \mathbb{Z}/d\mathbb{Z})$ is generated by the classes
$[S_4 \times_{\mathbb{Z}/2\mathbb{Z}} \mathbb{Z}/2d\mathbb{Z}]$ and $j_*[BO_{48}]$. 
\end{lem}
\begin{proof}
The injective homomorphism $j \colon \ZZ/2\ZZ \hookrightarrow \ZZ/d\ZZ$ 
induces a commutative diagram with exact rows:
\begin{align}\label{eq:comutative_diagram_S_4_H^2_UCT_Z/2Z_Z/dZ}
\begin{split}
    \xymatrix{
0 \ar[r] & \operatorname{Ext}^1_{\ZZ}(\ZZ/2\ZZ, \ZZ/2\ZZ) \ar[d] \ar[r] 
  & H^2(S_4, \ZZ/2\ZZ) \ar[r] \ar[d]^-{j_*} 
  & \operatorname{Hom}(\ZZ/2\ZZ, \ZZ/2\ZZ) \ar[r] \ar[d]^-{\simeq} 
  & 0
  \\
0 \ar[r] & \operatorname{Ext}^1_{\ZZ}(\ZZ/2\ZZ, \ZZ/d\ZZ) \ar[r] 
  & H^2(S_4, \ZZ/d\ZZ) \ar[r] 
  & \operatorname{Hom}(\ZZ/2\ZZ, \ZZ/d\ZZ) \ar[r] 
  & 0. 
}    
\end{split}
\end{align}
It follows from Remark~\ref{rem:four_element_in_H^2(S_4,Z/2Z)} that
\[
j_*[BO_{48}] \notin \operatorname{Ext}^1_{\ZZ}(\ZZ/2\ZZ, \ZZ/d\ZZ).
\]
Since the (image of the) subgroup
$\operatorname{Ext}^1_{\ZZ}(\ZZ/2\ZZ, \ZZ/d\ZZ)$
is generated by the class
$[S_4 \times_{\ZZ/2\ZZ} \ZZ/2d\ZZ]$,
the commutative diagram
\eqref{eq:comutative_diagram_S_4_H^2_UCT_Z/2Z_Z/dZ}
shows that $H^2(S_4, \ZZ/2d\ZZ) \simeq \ZZ/2\ZZ \times \ZZ/2\ZZ$
is generated by the classes $[S_4 \times_{\ZZ/2\ZZ} \ZZ/2d\ZZ]$ and $j_*[BO_{48}]$. 
\end{proof}

\begin{rem}\label{rem:S_4_j*_d/2}
The commutative diagram \eqref{eq:comutative_diagram_S_4_H^2_UCT_Z/2Z_Z/dZ} shows the following:
\begin{itemize}
  \item[(i)] If $d/2$ is odd, then the homomorphism $j_*$ is an isomorphism.
  \item[(ii)] If $d/2$ is even, then $j_*[BO_{48}] = j_*[\GL_2(\bF_3)]$.
\end{itemize} 
\end{rem}

\begin{lem}\label{lem:S_4_G_is_not_isomorphic_to_BO_48}
$[G] \neq j_*[BO_{48}]$. 
\end{lem}
\begin{proof}
Let $z \in BO_{48}$ denotes the unique element of order $2$ (see Remark \ref{rem:double_cover_S_4}). 
Suppose, to the contrary, that $[G] = j_*[BO_{48}]$, which implies that
\[
G \simeq (BO_{48} \times \bZ/d\bZ)/\langle (z, d/2) \rangle.
\] 
Note that, in this case, the center of $G$ is $\bZ/d\bZ$ and the subgroup $K \subset G$ corresponds to the subgroup $\SL_2(\bF_3) \subset BO_{48}$ by Lemma \ref{cor:Kclassification}. 
Let
\[
L := \overline{\mathbb{Q}}^{\ker(\rho_f)}
\]
be the fixed field corresponding to the open subgroup $\ker(\rho_f)$.
Then the homomorphism $\rho_f$ induces an isomorphism
\[
\operatorname{Gal}(L/\QQ) \simeq G.
\]
Let $G' \subset G$ be the subgroup corresponding to $BO_{48}$.
Set $M := L^{K}$ and $M' := L^{G'}$.
Since $K \subset G'$ and $G'/K \simeq \bZ/2\bZ$, we have
\[
[M : M'] = 2.
\]
Moreover, since $\operatorname{Gal}(M/\QQ) \simeq G/K \simeq \bZ/d\bZ$, 
the extension $M/\QQ$ is cyclic.
Hence $M'/\QQ$ is the unique subextension of $M/\QQ$ of index $2$,
and in particular $M'$ is totally real.
Since $L$ is totally imaginary as $\rho_f$ is odd, it follows that
for every complex conjugation $c \in G_\bQ$, the element $\rho_f(c)$
belongs to $G' \simeq BO_{48}$, i.e., 
\[
\rho_f(c) \in G' \simeq BO_{48}.
\]
On the other hand, since $\rho_f$ is odd, the element $\rho_f(c) \in \GL_2(\bC)$ has
eigenvalues $1$ and $-1$.
In particular, $\rho_f(c)$ does not lie in the center of $G'$.
This contradicts the fact that the center of \(BO_{48}\) is the only subgroup of order 2
(see Remark~\ref{rem:double_cover_S_4}). 
\end{proof}

\begin{prop}\label{prop:[G]_is_sum_of_[BO_48]_and_[S4Z/2dZ]}
$[G] = j_*[BO_{48}] +  [S_4 \times_{\bZ/2\bZ} \bZ/2d\bZ]$. 
\end{prop}
\begin{proof}
Since $\SL_2(\FF_3) \simeq K \subset G$ by Lemma~\ref{cor:Kclassification}, the group $G$ cannot be isomorphic to either
$S_4 \times \bZ/d\bZ$ or $S_4 \times_{\bZ/2\bZ} \bZ/2d\bZ$.
In particular, $[G] \neq 0$ and
$[G] \neq [S_4 \times_{\bZ/2\bZ} \bZ/2d\bZ]$.
Since $H^2(S_4, \bZ/d\bZ) \simeq \bZ/2\bZ \times \bZ/2\bZ$, it follows from Lemmas \ref{lem:S_4_H_2_genrator} and \ref{lem:S_4_G_is_not_isomorphic_to_BO_48}
that $[G] = j_*[BO_{48}] + [S_4 \times_{\bZ/2\bZ} \bZ/2d\bZ]$. 
\end{proof}

\begin{prop}
\label{prop:S4_type_image}
Suppose that $(\overline{G},\overline{K})=(S_4,A_4)$
Then the group $G$ is isomorphic to the group 
    \[
(BO_{48} \times_{\bZ/2\bZ} \ZZ/2d\ZZ)/\langle (z, d) \rangle.
    \]
    Here, $z \in BO_{48}$ denotes the unique element of order $2$ and the homomorphism $BO_{48} \twoheadrightarrow \ZZ/2\ZZ$ used in the fiber product is defined as the composition
\[
BO_{48} \twoheadrightarrow BO_{48}^{\rm ab} \simeq  \ZZ/2\ZZ.
\]
\end{prop}






\begin{proof}
This proposition follows from Proposition~\ref{prop:[G]_is_sum_of_[BO_48]_and_[S4Z/2dZ]}
(note that the sum of central extensions can be computed using the Baer sum;
see \cite[Definition~3.4.4]{Weibel94} for the definition of the Baer sum). 
\end{proof}

\begin{rem}\label{rem:Glois_image_GL(2,3)_expression_S_4_K_neq_G_case}
The group $G$  admits the following description in terms of $\GL_2(\bF_3)$. 
\begin{itemize}
    \item[(i)]    If $d/2$ is odd, then Proposition~\ref{prop:[G]_is_sum_of_[BO_48]_and_[S4Z/2dZ]}, together with Remark \ref{rem:S_4_j*_d/2}(i), implies that 
\begin{align*}
[G] &= j_*[BO_{48}] + [S_4 \times_{\bZ/2\bZ} \bZ/2d\bZ]  \\
    &= j_*([BO_{48}] + [S_4 \times_{\bZ/2\bZ} \bZ/4\bZ]) \\
    &= j_*[\GL_2(\bF_3)].
\end{align*}
It follows that
\[
G \simeq \GL_2(\bF_3) \times \bZ/(d/2)\bZ.
\]
    \item[(ii)]  If $d/2$ is even, then Proposition~\ref{prop:[G]_is_sum_of_[BO_48]_and_[S4Z/2dZ]}, together with Remark \ref{rem:S_4_j*_d/2}(ii), implies that 
\begin{align*}
[G] = j_*[\GL_2(\bF_3)] + [S_4 \times_{\bZ/2\bZ} \bZ/2d\bZ].
\end{align*}
It follows that
\[
G \simeq (\GL_2(\bF_3) \times_{\bZ/2\bZ} \ZZ/2d\ZZ)/\langle (-I, d) \rangle.
\]
\end{itemize}
\end{rem}

\bibliography{References.bib}
\bibliographystyle{amsplain}

\end{document}